\documentclass [11pt]{article}
\usepackage[]{amsmath,amssymb,amsfonts,latexsym,amsthm}
\usepackage[numeric,initials,nobysame]{amsrefs}
\usepackage[colorlinks=true, pdfstartview=FitV, pagebackref=false]{hyperref}
\usepackage{bbm,paralist}

\date{}
\title{Cores of random graphs are born Hamiltonian}
\author{{Michael Krivelevich\thanks{School of Mathematical
Sciences, Raymond and Beverly Sackler Faculty of Exact Sciences, Tel
Aviv University, Tel Aviv 69978, Israel. Email address: {\tt
krivelev@post.tau.ac.il}. Research supported in part by USA-Israel
BSF Grant 2010115 and by grants 1063/08, 912/12 from Israel Science
Foundation.}}
 \and {Eyal Lubetzky\thanks{Microsoft Research, One Microsoft Way, Redmond,
WA 98052-6399, USA. Email address: {\tt eyal@microsoft.com}.}}
\and {Benny Sudakov\thanks{Department of Mathematics, ETH, 8092 Zurich, Switzerland and Department of Mathematics, UCLA, Los Angeles, CA 90095. Email:
{\tt benjamin.sudakov@math.ethz.ch}. Research supported in part by NSF grant DMS-1101185, USA-Israeli BSF grant and by the Swiss National Science Foundation grant 200021-149111.}}
}
\numberwithin{equation}{section}
\setlength{\textwidth}{6.7in} \setlength{\evensidemargin}{0.0in}
\setlength{\oddsidemargin}{0.0in} \setlength{\textheight}{9.0in}
\setlength{\topmargin}{-0.5in} \setlength{\parskip}{1.3mm}
\setlength{\baselineskip}{1.7\baselineskip}

\newtheorem{maintheorem}{Theorem}

\newtheorem{theorem}{Theorem}[section]
\newtheorem{lemma}[theorem]{Lemma}
\newtheorem{claim}[theorem]{Claim}

\newtheorem{corollary}[theorem]{Corollary}

\renewcommand{\epsilon}{\varepsilon}


\newcommand{\E}{\mathbb{E}}
\renewcommand{\P}{\mathbb{P}}
\newcommand{\cC}{\mathcal{C}}

\newcommand{\cG}{\mathcal{G}}

\newcommand{\cS}{\mathcal{S}}

\newcommand{\cB}{\mathcal{B}}

\newcommand{\cJ}{\mathcal{J}}
\newcommand{\Gk}[1][G]{#1^{(k)}} 
\newcommand{\odd}{c_o}

\DeclareMathOperator{\bin}{Bin}

\topmargin 0pt \headsep 0pt
\newtheoremstyle{upright}%
        {8pt plus2pt minus4pt}%
        {8pt plus2pt minus4pt}%
        {\upshape}%
        {}%
        {\bfseries}%
        {:}%
        {1em}%
        {}%
\theoremstyle{upright}

\newcommand{\ignore}[1]{}

\begin{document}

\maketitle
\vspace{-0.5cm}
\begin{abstract}
Let $\{G_t\}_{t\geq0}$ be the random graph process ($G_0$ is
edgeless and $G_t$ is obtained by adding a uniformly distributed new
edge to $G_{t-1}$), and let $\tau_k$ denote the minimum time $t$
such that the $k$-core of $G_t$ (its unique maximal subgraph with
minimum degree at least $k$) is nonempty.  For any fixed $k\geq 3$
the $k$-core is known to emerge via a discontinuous phase
transition, where at time $t=\tau_k$ its size jumps from 0 to
linear in the number of vertices with high probability.
It is believed that for any $k\geq 3$ the core is Hamiltonian upon creation w.h.p., and Bollob\'as, Cooper, Fenner and Frieze further conjectured that it in fact admits $\lfloor(k-1)/2\rfloor$ edge-disjoint Hamilton cycles. However, even the asymptotic threshold for
Hamiltonicity of the $k$-core in $\cG(n,p)$ was unknown for any $k$.
We show here that for any fixed $k\ge 15$ the $k$-core of $G_t$ is
w.h.p.\ Hamiltonian for all $t \geq \tau_k$, i.e., immediately as
the $k$-core appears and indefinitely afterwards. Moreover, we prove
that for large enough fixed $k$ the $k$-core contains
$\lfloor (k-3)/2\rfloor$ edge-disjoint Hamilton cycles w.h.p.\ for
all $t\geq \tau_k$.

\end{abstract}

\section{Introduction}\label{sec:intro}

The fundamental problem of establishing the threshold for Hamiltonicity in the Erd\H{o}s-R\'enyi random graph traces back to the original pioneering works of Erd\H{o}s and R\'enyi introducing this model (see~\cite{ER60}). Following breakthrough papers by
P\'osa~\cite{Posa} 
and Korshunov~\cite{Korshunov}, this question has been
settled by Bollob\'as~\cite{Bollobas-1} and by Koml\'os and
Szemer\'edi~\cite{KS}, who proved that if $ p=(\log
n+\log\log n+\omega_n)/n$ for any diverging increasing sequence $\omega_n$ then the probability that a random
graph $G$ drawn from the probability space $\cG(n,p)$ of binomial
random graphs is Hamiltonian tends to $1$ as $n\to\infty$.
This threshold precisely coincides with the one for having minimum degree 2 in $\cG(n,p)$, and indeed a refined
hitting time version of this result was proved by Bollob\'as~\cite{Bollobas-1} and by
Ajtai, Koml\'os and Szemer\'edi~\cite{AKS}. They showed that in a typical random graph process $\{G_t\}_{t\geq 0}$ on $n$ vertices ($G_0$ is
edgeless and $G_t$ is obtained by adding a uniformly distributed
new edge to $G_{t-1}$), a Hamilton cycle appears \emph{exactly}
at the moment $t$ when the minimum degree of $G_t$ becomes 2 (an obvious prerequisite for Hamiltonicity).

With vertices of degree less than 2 being the primary obstacle for Hamiltonicity in the random graph, it was natural to instead consider the subgraph formed by (repeatedly) excluding such vertices. The $k$-\emph{core} of a graph $G$, denoted here by $\Gk$, is the (unique) maximal subgraph of $G$ of minimum degree at least $k$ (equivalently, it is the result of  repeatedly deleting vertices of degree less than $k$). This concept was introduced in 1984 by Bollob\'as~\cite{Bollobas-1}, who showed that for fixed $k\geq 3$ the $k$-core is typically nonempty and $k$-connected already at $p = C(k) / n$. {\L}uczak~\cite{Luczak87} established in 1987 that the threshold for the $k$-core to contain $\lfloor k/2 \rfloor $ edge-disjoint Hamilton cycles, plus a disjoint perfect matching for odd $k$, is when $p = (\frac1{k+1}\log n + k\log\log n)/n $.
In particular, the 2-core of $\cG(n,p)$ is Hamiltonian with high probability\footnote{An event is said to occur with high
probability (or w.h.p.\ for brevity) if its probability tends to 1 as $n\rightarrow\infty$.} when $p = (\frac13 \log n + 2\log\log n + \omega_n)/n$ for any diverging increasing sequence $\omega_n$, preceding the Hamiltonicity of the entire graph roughly by a factor of $3$.
However, for any fixed $k\geq 3$, the threshold for the $k$-core to contain a (single) Hamilton cycle remained open.

The best upper bound to date on this threshold is $(2+o_k(1))(k+1)^3/n$ by Bollob\'as, Cooper, Fenner and Frieze~\cite{BCFF}
via their study of the related model $\cG(n,m,k)$, a graph uniformly chosen out of all those on $n$ labeled vertices with $m$ edges and minimum degree at least $k$. It is well-known (and easily seen) that, conditioned on its number of vertices $N$ and edges $M$, the $k$-core of $\cG(n,p)$ is distributed as $\cG(N,M,k)$, hence the results on the latter model for a suitable $M=M(N)$ carry to $\Gk$ for $G\sim \cG(n,p)$. It was shown in~\cite{BCFF}, following a previous paper of Bollob\'as, Fenner and
Frieze~\cite{BFF}, that for any fixed $k\geq 3$ the graph $G\sim \cG(n,m,k)$ w.h.p.\ contains  $\lfloor
(k-1)/2 \rfloor$ edge-disjoint Hamilton cycles (plus a disjoint perfect matching when $k$ is even) as long as $m/n>(k+1)^3$
(see the recent work~\cite{Frieze} for an improvement in the special case $k=3$).
This can then be translated to $G\sim \cG(n,p)$ to imply that, for any fixed $k\geq 3$, its $k$-core is Hamiltonian w.h.p.\ provided that $p > (2+o_k(1))(k+1)^3/n$.

An obvious lower bound is the first appearance of the $k$-core, which on its own involves a remarkable first-order phase transition along the evolution of the random graph: Letting $k\geq 3$ and defining
\vspace{-0.09cm}
\begin{equation}\label{eq-tauk-def}
\tau_k = \min\big\{ t : G_t^{(k)} \neq \emptyset \big\}
\end{equation}
\vspace{-0.09cm}
to be the first time at which the random graph process gives rise to a nonempty $k$-core, {\L}uczak~\cites{Luczak91,Luczak92} showed that $|G_{\tau_k}^{(k)}| \geq n/5000$ w.h.p. That is, in a typical random graph process, the $k$-core remains empty until the addition of a single edge creates a chain reaction culminating in a linear-sized core. This fascinating phenomenon triggered a long line of works (see, e.g.,~\cites{CW,CM,Chvatal,Cooper,JL07,JL08,Molloy96,Molloy,PSW,PVW,Riordan}). Most notably, the tour-de-force by Pittel, Spencer and Wormald~\cite{PSW} determined the asymptotic threshold for its emergence to be $p=c_k/n$ for $c_k = \inf_{\lambda>0} \lambda / \P(\mathrm{Poisson}(\lambda) \geq k-1) = k + \sqrt{k\log k}+O(\sqrt{k})$, as well as characterized various properties of the core upon its creation, e.g., its size, edge density, etc.

\noindent\textbf{Main results.} The threshold for the Hamiltonicity of the $k$-core of $\cG(n,p)$ is roughly between $k/n$ and $2k^3/n$,
as discussed above. Comparing these, it was long believed that the lower bound should be asymptotically tight, and indeed Bollob\'as, Cooper, Fenner and Frieze~\cite{BCFF} conjectured (see also~\cite{PVW}) that their result could be pushed down to about $k/n$, and in particular, the $k$-core should be Hamiltonian w.h.p.\ for $p > c_k / n$.
 Furthermore, since the $k$-core is known to enjoy excellent expansion properties (being a random graph with a particularly nice degree distribution, namely, Poisson conditioned on being at least $k$) immediately upon its creation, one could expect the hitting time version of this result to also be true (see, e.g.,~\cite{JLR}*{Chapter 5} for related questions). That is, w.h.p.\ the $k$-core should be Hamiltonian since the very moment of its birth in the random graph process, i.e., at time $\tau_k$.
Here we are able to show this stronger statement for any $k\geq 15$, as given by the following theorem.

\begin{maintheorem}\label{thm-1}
Let $\{G_t\}$ be the random graph process on $n$ vertices. Fix $k\geq 15$ and let $\tau_k$
be the hitting time for the appearance of a $k$-core in $G_t$.
Then w.h.p.\ the $k$-core of $G_t$ is Hamiltonian for all $t \geq \tau_k$.
\end{maintheorem}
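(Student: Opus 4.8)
\medskip
\noindent\emph{Plan of proof.}
The plan is to first reduce the hitting-time statement to a family of ``static'' statements about a single time $m$, and then to split these by the density of the core. Since $m\ge\tau_k$ already forces $G_m^{(k)}\neq\emptyset$,
\[
\P\big(\exists\,m\ge\tau_k:\ G_m^{(k)}\text{ not Hamiltonian}\big)\ \le\ \sum_{m}\P\big(G_m^{(k)}\neq\emptyset\ \text{and}\ G_m^{(k)}\text{ not Hamiltonian}\big),
\]
and it suffices to bound each summand so that the sum over $m$ is affordable. For $m\ge\tfrac n2(\log n+(k-1)\log\log n+\omega_n)$ the $k$-core is all of $G_m$ and Hamiltonicity is classical, and the crude bound on the probability that $G_m$ has a vertex of degree below $2$ makes this tail of the sum negligible; I would dispose of all $m\ge Cn\log n$ this way. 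For $\tau_k\le m\le Cn\log n$ one may condition (by {\L}uczak's and Pittel--Spencer--Wormald's results, uniformly in $m$) on a likely event on which $G_m^{(k)}$ is distributed as $\cG(N,M,k)$ with $N=\Theta(n)$ and $M/N=O(\log n)$ lying in a known window; once $M/N$ exceeds a sufficiently large constant $D=D(k)$, a standard first-moment estimate gives an edge between any two vertex sets of size $\delta N$, after which a Hamilton cycle follows from the usual rotation--extension machinery. The heart of the matter is therefore $m=\Theta(n)$, where the core has \emph{constant} average degree $d=M/N\in(k,D]$ — precisely the range in which the standard random-graph Hamiltonicity arguments fail, since they rely on the $\Theta(\log n)$ average degree to force an edge between every two linear-sized sets.

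For this regime the first ingredient is \emph{small-set expansion}: w.h.p.\ every $S\subseteq V(G_m^{(k)})$ with $|S|\le\delta N$ has $|N(S)\setminus S|\ge 3|S|$. I would prove this as a statement about $\cG(n,c/n)$ itself, rather than about the conditioned model $\cG(N,M,k)$, in order to avoid paying the $e^{-\Theta(n)}$ cost of conditioning on the minimum degree: if such an $S$ violated the bound then $U:=S\cup N(S)$ would be a set of $u<4|S|$ vertices spanning at least $\tfrac k2|S|\ge\tfrac k8 u$ edges, whereas the expected number of sets of $u\le 4\delta n$ vertices spanning $\ge ku/8$ edges in $\cG(n,c/n)$ is at most $\sum_u\binom nu\binom{\binom u2}{ku/8}(c/n)^{ku/8}$. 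Since $c=c_k=(1+o_k(1))k$ (and similarly for all larger $c$ in the relevant range), this series is $o(1)$ precisely once $k$ exceeds an absolute constant, with room left for the factor $3$ and for the losses below — this is essentially where the hypothesis $k\ge 15$ enters. A P\'osa rotation argument then yields a longest path in $G_m^{(k)}$ whose families of rotation-endpoints, on each side, have size $\ge\delta N$.

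The second, and harder, part is to close such a path into a Hamilton cycle, and here the natural ``booster'' between two linear-sized sets is simply unavailable at constant density (a small set may have its whole neighborhood clustered; indeed there exist connected small-set expanders with no Hamilton path). The plan is to expose the core's edges in two rounds: a first round producing a linear-sized subgraph that is still a good small-set expander, so the rotation analysis applies to it, and a second round supplying a linear ``reservoir'' of essentially independent random edges used to finish the cycle; the few vertices added to the core by the second round are harmless, as each retains degree $\ge k$ and the core stays a small-set expander on all of its vertices. Crucially, one cannot simply add the reservoir edge by edge: a non-Hamiltonian expander has $\Omega(\delta^2 N^2)$ boosters but one may need up to $N$ of them, so $\Theta(N/\delta^2)$ random edges would be required, more than the $\Theta(kN)$ available. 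Instead I would arrange the first round to already produce a near-spanning collection of only $O(\log N)$ vertex-disjoint paths or cycles — for instance by extracting a couple of near-perfect matchings via the configuration-model description of $\cG(N,M,k)$, whose union is a spanning $2$-factor with $O(\log N)$ cycles — so that only $O(\log N)$ merges are needed, which a linear reservoir amply supports (together with the rotations, for inserting the $O(1)$ leftover vertices and for each merge).

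The step I expect to be the main obstacle is making this two-round exposure work \emph{near the threshold} $m\approx\tau_k$: there one cannot withhold a constant fraction of the edges, since the first-round graph $\cG\big(n,(1-\epsilon)m\big)$ lies below the $k$-core threshold and has empty $k$-core. Here the plan would be to lean on the precise structural description of the nascent core from Pittel--Spencer--Wormald — revealing its degree sequence first (w.h.p.\ a truncated-Poisson-like sequence with a positive density of each value $\ge k$ and maximum degree $O(\log n/\log\log n)$) and then generating its edges within the configuration model in two rounds, so as to keep both the expander structure of the first round and a linear, independent reservoir for the second. Carrying this out rigorously while controlling the bounded number of exceptional vertices, the path/cycle decomposition, and the independence of the reservoir is, I expect, the delicate technical core of the whole argument.
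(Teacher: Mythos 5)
Your overall architecture (reduce to per-time statements, split at $m=\Theta(n\log n)$ and at a constant-density window, small-set expansion via a first-moment count in $\mathcal G(n,c/n)$ rather than in $\mathcal G(N,M,k)$, P\'osa rotations, a $2$-factor with few cycles absorbed one by one through sprinkling) matches the paper's outline, and you correctly single out the real obstacle: near $\tau_k$ one cannot hold back a constant fraction of edges and still see a nonempty $k$-core in the first round. But your proposed fix for that obstacle --- reveal the core's degree sequence and do a two-round exposure inside the configuration model --- is a genuine gap, not a workable substitute for what the paper does.

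There are several problems with the configuration-model plan. First, $\mathcal G(N,M,k)$ only arises after conditioning on $N$ and $M$, while you also need the degree sequence; but once the degree sequence is fixed, a partial pairing of the stubs does \emph{not} produce a first-round graph of minimum degree $k$, so none of your expansion or matching lemmas (which all assume $\delta\ge k$ or $\delta\ge k-1$) apply to that first-round graph. Second, the ``reservoir'' consisting of the unmatched stubs is uniform only \emph{given the remaining stub counts}, which is a far more constrained object than ``uniform random edges in a fixed linear vertex set'': the crucial calculation --- that a random reservoir edge is a booster with constant probability --- would have to be re-derived in that constrained setting, and it is not clear it goes through. Third, two matchings extracted from a graph are not two independent uniform pairings, so there is no reason they produce a $2$-factor with only $O(\log N)$ cycles; the correct (and much weaker, but sufficient) bound one can actually prove is $O(n/\sqrt{\log n})$ cycles, coming from a bound on the maximum number of vertex-disjoint cycles in $\mathcal G(n,c/n)$.

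The paper's actual resolution of the near-$\tau_k$ obstacle is different and is its main novelty. One withholds only $m-m'=n/\log\log n=o(n)$ edges, takes $H=G_{m'}$, and never looks at the $k$-core of $H$ (which may well be empty). Instead one conditions on the vertex set $V'$ of the \emph{final} core $G_m^{(k)}$ and partitions it into $\mathcal B_k=\{v\in V': |N_H(v)\cap V'|\ge k\}$ and $\mathcal C_k=V'\setminus\mathcal B_k$. One then reveals everything except the withheld edges with both endpoints in $\mathcal B_k$ (revealing only their number $J$); the point is that adding any $J$ edges inside $\mathcal B_k$ keeps $V(G^{(k)})=V'$, so the conditioning on $V'$ does not bias those edges, and they are genuinely uniform over the missing edges inside $\mathcal B_k$. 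Showing $J=\Theta(n/\log\log n)$ despite the dependence between $\mathcal B_k$ and the last $n/\log\log n$ edges is itself delicate (one approximates $\mathcal B_k$ from below by the set $U^*$ of vertices of core degree $\ge k+3$, bounded $G$-degree, and few late incident edges, which is determined by $E(G)$ alone). Since the $2$-factor has only $O(n/\sqrt{\log n})$ cycles and each reservoir edge is a booster with probability $\Omega(1)$, a sublinear reservoir of size $n/\log\log n$ suffices. None of this --- the future-conditioning, the $\mathcal B_k/\mathcal C_k$ split, the sublinear withholding, the control of $J$ --- appears in your proposal, and it is precisely the content that makes the hitting-time statement go through.

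One smaller point: the $2$-factor in the paper is produced via two near-perfect matchings from Tutte's/Berge's condition, verified for the (slightly depleted) induced graph on $V'$, not via the configuration model; and for $m\ge n\log n$ the paper invokes the known $k$-connectivity/Hamiltonicity threshold directly rather than a crude minimum-degree bound.
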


Note that the Hamiltonicity of the $k$-core is a \emph{non-monotone} property, and as such, guaranteeing that $G_t^{(k)}$ is Hamiltonian \emph{for all} $t \geq \tau_k$ is nontrivial even given that it is Hamiltonian at time $t=\tau_k$.
This is due to the fact that the $k$-core dynamically grows with the addition of edges along the random graph process. With each new vertex that joins the $k$-core, we must immediately find a new cycle going through this vertex as well as all the other vertices of the core.
This dynamic growth of the $k$-core poses additional challenges for the proofs, as we will now explain.

A common recipe for establishing Hamiltonicity in random graph models relies on \emph{sprinkling} new edges, through which longer and longer paths are formed in the graph, eventually culminating in a Hamilton path that is followed
by a Hamilton cycle. To show that the graph $G$ at time $T_1$ is Hamiltonian via this framework, one can examine the graph at time $T_0 = T_1 - c n$, call it $H$, and show that it enjoys the following property: Out of all edges missing from $H$, a constant proportion are ``good'' in the sense that adding any single good edge will support the above extension of the maximum path length. This is typically established using strong expansion properties of $H$, and in particular, this property is retained when additional edges are added to the graph. As such, when adding $c n$ edges (for an appropriate constant $c>0$) along the interval $(T_0,T_1)$, one can collect (w.h.p.) up to $n$ new good edges, leading to a Hamilton cycle. (A version of this framework also exists for a choice of $T_0=T_1 - o(n)$, provided that the initial graph $H$ satisfies additional more delicate properties.)

In our setting, the above framework faces a major obstacle due to the dynamical nature of the $k$-core, already present if one merely wishes to show the weaker asymptotic version of Theorem~\ref{thm-1} (i.e., that for any fixed $\epsilon>0$ the $k$-core of $G\sim\cG(n,p)$ is w.h.p.\ Hamiltonian at $p=(c_k+\epsilon)/n$).
Indeed, the very same sprinkling process that repeatedly extends our longest path simultaneously enlarges the $k$-core and might destroy the aforementioned properties of $H=G_{T_0}$ and foil the entire procedure.

Even more challenging is the obstacle in the way of a hitting time version (Hamiltonicity of $\Gk_t$ at $t=\tau_k$), due to the first-order phase transition by which the $k$-core emerges. Already at time $\tau_k - 1$ the $k$-core is empty, thus inspecting $H=G_{T_0}$ for $T_0 < T_1=\tau_k$ would not reveal which subgraph of $G_{T_1}$ will host the future core (and the sought properties will clearly not hold for every possible subgraph).

The novelty in this work is a different setup for sprinkling, hinging on \emph{conditioning on the future} at time $T_0$. Roughly put, at time $T_0$ we condition on the subset of vertices $V'$ that will host the $k$-core at time $T_1$, as well as on some of the edges along the interval $(T_0,T_1)$, in a careful manner such that:
\begin{compactenum}[\it(i)]
\item \label{it-J} A constant fraction of the edges along the interval $(T_0,T_1)$ remains unrevealed.
  \item\label{it-uniform} The unrevealed edges are \emph{uniformly distributed} over a (linear) subset of the edges that, all throughout the sprinkling process, contains most of the good edges.
\end{compactenum}
Observe that one can establish various properties of the induced graph of $H$ on $V'$ (e.g., expansion) and these would now be retained throughout the sprinkling (as $V'$ remains static in the new setup).
Properties~\eqref{it-J} and~\eqref{it-uniform} then enable one to appeal to the standard sprinkling framework for Hamiltonicity.
In our context, combining this method with the classical~\emph{rotation-extension} technique of P\'osa~\cite{Posa} reduces Hamiltonicity to the task of verifying certain properties of the induced graph on $V'$ at time $T_0$ (namely, suitable expansion and the existence of a sub-linear number of cycles covering all vertices). The latter task is where we require that $k \geq 15$ for the arguments to hold (and be reasonably short), and it is plausible that a more detailed analysis of this technical part would relax this restriction.

Engineering the above setup to begin with, however, is highly nontrivial:
Indeed, the identity of the vertex set of the future core $V'$ can give away a great deal of information on the upcoming edges until time $T_1$, easily biasing them away from being uniform on a given set of edges. Moreover, the set of good edges dynamically changes, and depends on the structure of the $k$-core in a complicated way...

The key step is to examine the set $V'$, conditioned to host the $k$-core at time $T_1$, and partition it into $\cB_k\subset V'$ and $\cC_k=V'\setminus \cB_k$, based on the graph $H$ at time $T_0=T_1-o(n)$ in the following manner:
The set $\cB_k$ will consist of every $v\in V'$ such that $v$ has at least $k$ neighbors in $V'$ already at time $T_0$. We then reveal all edges along the interval $(T_0,T_1)$ \emph{except} those with both endpoints in $\cB_k$ (giving away only the total number of such edges encountered along this interval) with the intuition being:
 \begin{compactenum}[(i)]
 \vspace{-0.09cm}
   \item Every $v\in \cB_k$ already guaranteed its inclusion in the $k$-core, thus the position of the unrevealed edges cannot interfere with our conditioning on $V'$, and consequently these edges are \emph{uniform}.
   \item As the $k$-core is linear and $T_1-T_0=o(n)$, we have $|\cB_k|=(1-o(1))|V'|$ and so almost all the good edges in our sprinkling process are going to have both of their endpoints lie within $\cB_k$.
 \end{compactenum}
 \vspace{-0.09cm}
Of course, turning this intuition into a rigorous argument requires a delicate analysis, working around several complications that conditioning on the future $V'$ entails. The full details are given in Section~\ref{sect2}.


Having settled the issue of Hamiltonicity in $k$-cores for
$k\geq 15$, we turn our attention to \emph{packing} Hamilton cycles,
addressing the conjecture of Bollob\'as, Cooper, Fenner and Frieze~\cite{BCFF} that the $k$-core should give rise to $\lfloor (k-1)/2\rfloor$ edge-disjoint Hamilton cycles (plus a perfect matching when $k$ is even).

As noted in~\cite{BCFF}, packing $\lfloor (k-1)/2\rfloor$ edge-disjoint Hamilton cycles in the $k$-core of $\cG(n,p)$ for $p=O(1/n)$ would be best possible w.h.p. The obstruction, as in the previous work of {\L}uczak~\cite{Luczak87}, is the presence of a so-called $k$-\emph{spider}, a vertex of degree $k+1$ whose neighbors all have degree $k$. (Whence, with $k$ being even, packing $k/2$ Hamilton cycles would clearly use all the edges incident to these degree-$k$ vertices, and in the process exhaust all $k+1$ edges incident to the root, impossible.)
As mentioned above, packing this many Hamilton cycles (possibly plus a matching) in the $k$-core was shown in~\cite{BCFF} to be possible roughly at $p=2 k^3 / n$, compared with its emergence threshold of $p\sim k/n$.

Here we nearly settle this conjecture, albeit for a sufficiently large $k$, by extracting $\lfloor
(k-3)/2 \rfloor$ edge-disjoint Hamilton cycles (one cycle short of the target number) as soon as the $k$-core is born. Similarly to the result in Theorem~\ref{thm-1}, this packing is then maintained indefinitely afterwards w.h.p.

\begin{maintheorem}\label{thm-2}
Let $\{G_t\}$ be the random graph process on $n$ vertices. Let
$\tau_k$ denote the hitting time for the appearance of a $k$-core in
$G_t$ for some large enough fixed $k$. Then w.h.p.\ the $k$-core of
$G_t$ contains $\lfloor (k-3)/2\rfloor$ edge-disjoint Hamilton
cycles for all $t \geq \tau_k$.
\end{maintheorem}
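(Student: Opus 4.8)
The plan is to bootstrap off the machinery developed for Theorem~\ref{thm-1} and iterate it $\lfloor(k-3)/2\rfloor$ times, peeling off one Hamilton cycle at a stage. Concretely, I would run the same ``conditioning on the future'' setup: at time $T_0 = T_1 - o(n)$ I condition on the vertex set $V'$ of the future $k$-core at time $T_1$, identify $\cB_k\subset V'$ consisting of those vertices already having $\ge k$ neighbors in $V'$ at time $T_0$, and leave unrevealed the edges of the interval $(T_0,T_1)$ with both endpoints in $\cB_k$, which are then uniform on the pairs inside $\cB_k$. The difference is quantitative: to find many edge-disjoint Hamilton cycles I need the induced graph on $V'$ at time $T_0$ to be robustly Hamilton-connected even after deleting up to $\lfloor(k-3)/2\rfloor$ near-spanning sparse subgraphs, and to have enough ``slack'' degree so that removing $\lfloor(k-3)/2\rfloor$ cycles does not destroy the expansion. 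This is exactly why the theorem only asserts $\lfloor(k-3)/2\rfloor$ rather than $\lfloor(k-1)/2\rfloor$ cycles and only for large $k$: a vertex of the $k$-core has degree roughly $k + \sqrt{k\log k}$ on average but minimum degree exactly $k$, and the $k$-spider obstruction from~\cite{Luczak87,BCFF} shows one genuinely cannot hope for better with a crude argument; I concede two cycles' worth of degree to keep every vertex with positive residual degree and the residual graph an expander throughout the peeling.

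The key structural step, replacing the ``sub-linear number of cycles covering all vertices'' input used for Theorem~\ref{thm-1}, is to establish for the induced graph $H[V']$ at time $T_0$ a stronger property: for every subgraph $F\subseteq H[V']$ which is a union of at most $\lfloor(k-3)/2\rfloor$ Hamilton cycles (equivalently, every vertex loses at most $k-3$ from its degree), the graph $H[V']\setminus F$ still (a) is a good expander in the P\'osa sense — every small set has a large neighborhood — and (b) can be covered by $o(n)$ vertex-disjoint paths/cycles. Part (a) follows from the fact that $H[V']$ is distributed as $\cG(N,M,k)$, whose degree sequence is Poisson conditioned to be $\ge k$; a union bound over small sets shows that even after removing a $(k-3)$-regular-ish subgraph the expansion survives, precisely because for large $k$ the typical excess degree $\sqrt{k\log k}$ dominates any $O(1)$ loss coming from the constraint ``$\ge k$'' being nearly tight only on a vanishing fraction of vertices. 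Part (b) is where $k\ge$ some absolute constant enters, analogous to the $k\ge 15$ threshold: I would bound the number of ``short'' cycles needed by a first-moment computation on the residual degree sequence, using that after removing at most $k-3$ at each vertex the minimum residual degree is at least $3$ almost everywhere, which is enough to run the path-cover argument. Given (a) and (b), one invokes the rotation–extension technique of P\'osa~\cite{Posa} inside the sprinkling interval: the uniform unrevealed edges inside $\cB_k$ are, with the expansion of $H[V']\setminus F$ in hand, overwhelmingly likely to be ``good'' (booster) edges, so $\Omega(n)$ of them close up into a fresh Hamilton cycle edge-disjoint from all previously extracted ones, and the $o(|V'|)$ vertices outside $\cB_k$ are absorbed using their guaranteed $k$ (resp.\ residual $\ge 3$) neighbors in $V'$.

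For the ``indefinitely afterwards'' part (all $t\ge\tau_k$), I would argue exactly as for Theorem~\ref{thm-1}: the property established at $T_1$ — existence of $\lfloor(k-3)/2\rfloor$ edge-disjoint Hamilton cycles together with the residual-expansion of the leftover graph — is monotone-friendly in the sense that adding edges and enlarging the $k$-core only adds candidate booster edges, so one repeats the sprinkling over a sequence of windows $T_1 < T_2 < \cdots$ with $T_{i+1}-T_i = o(n)$, each time re-absorbing the newly arrived core vertices into each of the $\lfloor(k-3)/2\rfloor$ cycles via local rotations, and a union bound over the $O(n^2)$ relevant times (or over $O(n/\omega_n)$ windows, spending $\omega_n$ edges each) keeps the failure probability $o(1)$. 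The main obstacle I anticipate is controlling the \emph{joint} distribution after the conditioning: once we have removed $i$ Hamilton cycles and want the $(i+1)$-st, the ``forbidden'' edge set $F$ depends on the earlier cycles, which themselves depended on the earlier sprinkled edges, so I must be careful that the unrevealed edges inside $\cB_k$ at stage $i+1$ are still uniform and independent of $F$. I would handle this by reserving \emph{disjoint} batches of sprinkling edges for the distinct stages in advance — partition the interval $(T_0,T_1)$ into $\lfloor(k-3)/2\rfloor$ equal sub-intervals at the outset, condition on the $\cB_k$-internal edges of all but the $i$-th sub-interval when building cycle $i$ — so that at each stage a genuinely fresh uniform linear-sized reservoir of edges inside $\cB_k$ is available, at the cost of shrinking the booster density by a factor $\lfloor(k-3)/2\rfloor = O(k)$, which is harmless since everything is ultimately linear in $n$ with $k$ fixed.
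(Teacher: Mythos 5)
Your high-level framework is the right one --- the same conditioning-on-the-future setup, the decision to reserve disjoint batches of sprinkling edges for the different cycles in advance, and the cycle-by-cycle extraction all match the paper's Theorems~\ref{thm-pack-small-c} and~\ref{thm-pack-large-c}. But there is a genuine gap in your item (b), which is exactly where the paper introduces its one new structural ingredient and which you have not identified.

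You propose to show that for \emph{every} subgraph $F\subseteq H[V']$ that is a union of at most $k_1=\lfloor(k-3)/2\rfloor$ Hamilton cycles, the residual graph $H[V']\setminus F$ can be covered by $o(n)$ vertex-disjoint paths and cycles, arguing that ``minimum residual degree at least $3$ almost everywhere... is enough to run the path-cover argument.'' This is the step that does not work. Minimum degree $3$ is not sufficient: the paper's own expansion machinery (Lemma~\ref{lem-expansion}) requires minimum degree at least $5$, and the open-problems discussion explicitly flags degree $3$ as an obstruction (``consider a set of degree-$3$ vertices forming a cycle''). More to the point, you are asking to exhibit a near-2-factor in a \emph{quantified-over-all-$F$} residual graph, which is much harder than the single-2-factor extraction in Theorem~\ref{thm-1}, and you give no mechanism (Tutte-type condition, matching argument, or otherwise) for producing it. This is precisely what the paper sidesteps: Lemma~\ref{lem-critical2} invokes the results of Pra{\l}at--Verstra\"ete--Wormald and of Chan--Molloy to show that the $k$-core (minus $o(n)$ edges) contains a $(k-2)$- or $(k-1)$-factor, hence a $(2k_1)$-factor, which is then decomposed \emph{upfront} into $k_1$ edge-disjoint $2$-factors $\Lambda_1,\ldots,\Lambda_{k_1}$. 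At stage $i$ one converts $\Lambda_i$ into a Hamilton cycle inside the backbone graph $G_i = G' \setminus \bigl( H_1\cup\cdots\cup H_{i-1}\cup\Lambda_{i+1}\cup\cdots\cup\Lambda_{k_1}\bigr)$, which has minimum degree $k - (2k_1-2)\ge 5$ at every stage --- not merely $\ge 3$ at the end, and not by conceding ``positive degree.'' The unused edges of $\Lambda_i$ are then released back, keeping the degree bound for the next stage. Without this factor-decomposition input your plan has no way to get the $2$-factor skeleton at each stage, so the proposal as stated has a hole that cannot be patched by a first-moment computation on the residual degree sequence.

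A secondary imprecision: you attribute the loss of two cycles (from $\lfloor(k-1)/2\rfloor$ to $\lfloor(k-3)/2\rfloor$) to the $k$-spider obstruction, but the spider obstruction only rules out $\lfloor k/2\rfloor+1$ cycles; the genuine reason the paper stops at $\lfloor(k-3)/2\rfloor$ is the degree-$5$ requirement of Lemma~\ref{lem-expansion} together with the fact that the PVW/CM factor results currently stop at $(k-1)$-factors.
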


The proof of the above theorem follows the same framework of the proof of Theorem~\ref{thm-1}, with the main difference being the properties we require from the induced subgraph on the future $k$-core $V'$ before commencing the sprinkling process.
Instead of seeking a single 2-factor\footnote{A \emph{factor} is a spanning subgraph, and a \emph{$k$-factor} is one that is $k$-regular.}
in this subgraph as in the proof of Theorem~\ref{thm-1}, the new goal becomes finding a $\lfloor(k-3)/2\rfloor$ edge-disjoint 2-factors, which we can then convert to Hamilton cycles one at a time.
 To this end we rely on the recent results
of Pra{\l}at, Verstra{\"e}te and Wormald~\cite{PVW} and of Chan and
Molloy~\cite{CM}, which showed the existence of $(k-2)$-factors and $(k-1)$-factors in the $k$-core, respectively, for any sufficiently large $k$.


\noindent\textbf{Organization and notation.} 
In \S\ref{sect2} we prove Theorem~\ref{thm-1} modulo two technical
lemmas (Lemmas~\ref{lem-critical} and~\ref{lem-supercritical}, whose proofs appear in \S\ref{sec:core-properties})
on typical properties of $k$-cores in the critical and the
supercritical regimes, resp. 
The proof of Theorem~\ref{thm-2} appears in
\S\ref{sec:ham-pack}, and the final section \S\ref{sec:open} contains open problems.

Throughout the paper, all logarithms are in the natural basis.
 Our graph theoretic notation is standard: For a given graph $G$ and subsets $X,Y$ of its vertices, $e_G(X)$ and $e_G(X,Y)$ denote the numbers of
edges of $G$ spanned by $X$ and between $X$ and $Y$, respectively.
We let $N_G(X)$ denote the set of neighbors of $X$ in $G$, and occasionally refer to $N_G(X)\setminus X$ as the external
neighbors of $X$ in $G$.

\section{Hitting time for Hamiltonicity of the core}\label{sect2}

In this section we prove Theorem~\ref{thm-1} modulo two technical
lemmas whose proofs will be postponed to Section~\ref{sec:core-properties}. The task of establishing that the
$k$-core of almost every random graph process is Hamiltonian as soon
as it emerges and indefinitely afterwards is divided into two
separate regimes: an $O(n)$-interval of the random graph process
surrounding the abrupt emergence of the $k$-core, which we refer to
as the \emph{critical regime}, and the $O(n\log n)$-interval that
immediately follows it, referred to as the \emph{supercritical
regime}. Our aim in the critical regime is to establish
Hamiltonicity w.h.p.\ under the assumption that the $k$-core already
exists, whereas in the supercritical regime we aim to maintain the
Hamiltonicity property even as the $k$-core continues to grow.
Achieving these goals are the following
Theorems~\ref{thm-ham-small-c} and~\ref{thm-ham-large-c}, which
address the critical and supercritical regimes, respectively.
\begin{theorem}
  \label{thm-ham-small-c}
Fix $k\geq 15$ and let $G\sim \cG(n,m)$ for some $m$ satisfying $\max\{k/2,10\} n \leq m \leq \frac76 k n$.
Then with probability $1-o(n^{-3})$ either $\Gk$ is empty or it is Hamiltonian.
\end{theorem}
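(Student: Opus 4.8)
The plan is to establish the bound for a single fixed value of $m$ in the stated range with probability $1-o(n^{-3})$; a union bound over the $O(n)$ relevant values then shows that w.h.p.\ $\Gk$ is empty‑or‑Hamiltonian simultaneously for all of them, which is how Theorem~\ref{thm-ham-small-c} feeds into the proof of Theorem~\ref{thm-1}. Fix such an $m$; we may assume $\Gk\neq\emptyset$, as otherwise there is nothing to prove. Set $m_0=m-s$ for a parameter $s=s(n)$ with $s\to\infty$ and $s=o(n)$ (chosen below to dominate the number of cycles supplied by the structural lemma), and expose $G\sim\cG(n,m)$ as $G=G'\cup\{e_1,\dots,e_s\}$ where $G'\sim\cG(n,m_0)$ and $e_1,\dots,e_s$ is a sequence of fresh (essentially uniform) edges, the \emph{sprinkled} edges. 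Condition on $V':=V(\Gk)$, which we may assume satisfies $|V'|\ge cn$ for an absolute constant $c>0$ (this, with the required probability, is part of the structural input, Lemma~\ref{lem-critical}). Partition $V'=\cB_k\cup\cC_k$ where $\cB_k=\{v\in V':|N_{G'}(v)\cap V'|\ge k\}$; every $v\in\cC_k$ has fewer than $k$ neighbours of $V'$ in $G'$ but at least $k$ such neighbours in $G$, hence is incident to a sprinkled edge, so $|\cC_k|\le 2s=o(|V'|)$.

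The heart of the argument is a \emph{conditioning on the future}: reveal $G'$, the set $V'$, all of the sprinkled edges \emph{except} those with both endpoints in $\cB_k$, and the number $J$ of the latter. The key claim is that deleting from $G$ every sprinkled edge lying inside $\cB_k$ does not alter the peeling process defining $\Gk$: such an edge joins two vertices that already have $k$ neighbours within $V'$ in $G'$, so — once $V'$ is known to survive peeling as a whole — it is redundant for keeping $\cB_k$ in the core, while being internal to $\cB_k$ it cannot rescue any vertex of $\cC_k$ or of $V(G)\setminus V'$ from being peeled. Making this rigorous calls for a two‑sided (sandwich) argument comparing the monotone peeling dynamics of $G$ and of $G$ with these edges removed, the only friction being that $\cB_k$ is itself defined via $V'$. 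Granting it, the event $\{V(\Gk)=V'\}$ is measurable with respect to everything revealed, so conditionally the $J$ unrevealed edges are \emph{uniformly distributed} over the $J$‑subsets of non‑edges of $\binom{\cB_k}{2}$; and since $|\cB_k|=\Theta(n)$, a Chernoff bound gives $J=\Theta(s)$ with probability $1-o(n^{-3})$.

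It remains to turn this into a Hamilton cycle, and here one invokes the structural Lemma~\ref{lem-critical}, which says that typically (with failure probability $o(n^{-3})$) the conditioning on $V(\Gk)=V'$ forces the induced graph $H':=G'[V']$ to be a ``near‑core'': even if $\Gk$ was empty at time $m_0$, the graph $H'$ is connected, is a robust expander — every $S\subseteq V'$ with $|S|\le\beta n$ has $|N_{H'}(S)\setminus S|\ge 2|S|$ for a fixed $\beta>0$ — and carries a spanning union of at most $r=o(s)$ vertex‑disjoint cycles. Starting from this cycle cover and feeding in the $J$ uniform sprinkled edges inside $\cB_k$ one at a time, one runs the P\'osa rotation--extension process: at each stage the current graph, being connected and expanding, either is already Hamiltonian or has $\Omega(n^{2})$ \emph{boosters} (non‑edges whose addition reduces the number of cycles in some spanning cycle cover), of which all but $O(sn)=o(n^{2})$ lie inside $\cB_k$ — so the next sprinkled edge is a usable booster with probability $\Omega(1)$, and (expansion and connectivity being preserved under edge additions) this persists throughout. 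Since $J=\Theta(s)\gg r$, with probability $1-o(n^{-3})$ one collects enough boosters to merge all cycles into one, i.e.\ a Hamilton cycle on $V'=V(\Gk)$, proving the theorem.

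I expect the main obstacle to be the conditioning step. The identity of the future core $V'$ is highly informative about the sprinkled edges — it can easily bias them away from uniformity on any prescribed edge set — and the whole scheme rests on the assertion that restricting to sprinkled edges \emph{inside} $\cB_k$ exactly neutralizes this information, equivalently that such edges never influence the peeling that produces $\Gk$; checking this while $\cB_k$ is defined through $V'$, and keeping the exceptional set $\cC_k$ of size $o(n)$, is the delicate part. The second technical load, carried in Section~\ref{sec:core-properties} by Lemma~\ref{lem-critical}, is verifying the expansion and cycle‑cover properties for $k$ as small as $15$: this reduces to first‑ and second‑moment estimates in the $\cG(N,M,k)$ model with $M/N$ of constant order, bounding the number of small sets with poor external expansion and the obstructions to a low‑complexity cycle cover — and it is precisely these moment bounds that dictate the value $k\ge15$.
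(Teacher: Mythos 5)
Your outline tracks the paper's proof closely: same decomposition $G = H\cup \{\text{sprinkled edges}\}$ with $m-m'=o(n)$, same definition of $\cB_k$ and $\cC_k$, the same ``condition on the future'' idea with the argument that sprinkled edges internal to $\cB_k$ cannot change $V(\Gk)$ (so they remain uniform given what is revealed), and the same route via Lemma~\ref{lem-critical} and P\'osa rotation--extension. You also correctly identify the uniformity-of-$\cJ$ step as requiring care and correctly observe that fixing $m$ deterministically is needed.

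There is, however, one genuine gap: the assertion ``since $|\cB_k|=\Theta(n)$, a Chernoff bound gives $J=\Theta(s)$.'' This does not follow, and the paper goes out of its way to flag exactly this point. The difficulty is that $\cB_k$ is \emph{not} a fixed target determined by $H$ alone: membership $v\in\cB_k$ requires both $v\in V(\Gk)$ (which depends on the sprinkled edges) and $|N_H(v)\cap V(\Gk)|\ge k$ (again depending on $V(\Gk)$, hence on the sprinkled edges). Consequently $J$ is not a sum of conditionally independent indicators over a fixed set of pairs, and standard concentration does not apply. The paper illustrates the danger with concrete examples: one can engineer a conditional space in which $\cC_k$ has size exactly $2(m-m')$ forcing $J=0$; and a seemingly innocuous variant $\cB'_k$ (vertices whose $V(\Gk)$-neighborhoods are the same in $G$ and in $H$) satisfies $J'=0$ by definition. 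To get the lower bound $J\ge n/(100\log\log n)$ the paper instead constructs a proxy set $U$ of vertices whose degree in $\Gk$ is at least $k+3$ --- a set determined by the \emph{unordered} edge set of $G$ alone --- proves $|U|\ge \tfrac25 n$ and a density lower bound inside $U$ (events $A_1,A_2$), applies hypergeometric concentration to the count $J_U$ of sprinkled edges inside $U$, and then shows that removing the few vertices of $U$ incident to $\ge 4$ sprinkled edges or of large $G$-degree ($W_1,W_2$, controlled via $A_3$) leaves $U^*\subset\cB_k$ while discarding only $o(n/\log\log n)$ edges. That entire machinery replaces your one-line Chernoff bound, and without it the argument does not close.
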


\begin{theorem}
  \label{thm-ham-large-c}
Fix $k\geq 15$, let $H\sim \cG(n,m)$ for $ \frac16(7k-1)n  \leq m \leq n\log n$, and let $G$ be the graph obtained by adding $\lfloor n/8\rfloor$ uniformly distributed new edges to $H$. Then the probability that $\Gk[H]$ is Hamiltonian, and at the same time $\Gk[G]$ is connected but not Hamiltonian, is at most $o(n^{-3})$.
\end{theorem}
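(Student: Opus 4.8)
The plan is to run the standard sprinkling-plus-rotation-extension argument, but in the modified setup advertised in the introduction, where we condition on the vertex set $V'$ of the $k$-core of $G$ before revealing the sprinkled edges. First I would expose $H$ and its $k$-core $\Gk[H]$, and assume the favorable event that $\Gk[H]$ is Hamiltonian (otherwise there is nothing to prove). Let $V'=V(\Gk[G])$ be the (random) vertex set of the final core; since we add only $\lfloor n/8\rfloor=o(n)$ edges and $|\Gk[H]|$ is already linear (by {\L}uczak's result and the lower bound on $m$), we have $V(\Gk[H])\subseteq V'$ and $|V'\setminus V(\Gk[H])|=o(n)$ w.h.p., so in particular $|V'|=(1-o(1))|V(\Gk[H])|$. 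Now partition $V'=\cB_k\cup\cC_k$, where $\cB_k$ is the set of $v\in V'$ having at least $k$ neighbors \emph{within $V'$} already in $H$; every vertex of $\cB_k$ is guaranteed membership in the final core regardless of where the sprinkled edges fall, so conditioning on $V'$ does not bias the edges internal to $\cB_k$, and $|\cB_k|=(1-o(1))|V'|$.

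Next I would re-reveal the $\lfloor n/8\rfloor$ sprinkled edges in the following order: first disclose, for each such edge, whether or not both its endpoints lie in $\cB_k$, and disclose fully every edge that does \emph{not} have both endpoints in $\cB_k$; this determines $V'$ (together with $H$) and uses up $o(n)$ of the sprinkled edges, since an edge landing inside $\cB_k$ cannot change the core's vertex set. What remains is a number $J$ (w.h.p. $J=\Theta(n)$, concentrated) of edges that are, conditionally, independent and uniformly distributed over the pairs inside $\cB_k$ that are not already edges of $H$. This is exactly properties \eqref{it-J} and \eqref{it-uniform}: a linear reservoir of genuinely uniform sprinkle-edges, supported on a vertex set $\cB_k$ that is static and contains all but $o(n)$ vertices of the core. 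The induced graph $H[\cB_k]$ has good expansion — this is where I invoke Lemma~\ref{lem-supercritical} on typical properties of supercritical $k$-cores — and expansion is monotone under edge addition, so it persists throughout the sprinkling.

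With this reservoir in hand I would run the P\'osa rotation-extension dynamics: start from the Hamilton cycle of $\Gk[H]$ (restricted appropriately, and augmented by a path through $V'\setminus V(\Gk[H])$ together with the already-revealed low-degree edges, so that we begin with a Hamilton path of $\Gk[G]$ — here the hypothesis that $\Gk[G]$ is \emph{connected} is used, to stitch the sporadic new core-vertices onto the existing cycle), and repeatedly use one uniform edge from the reservoir to either close the path into a Hamilton cycle of $\Gk[G]$ or extend it. The standard P\'osa analysis shows that, by expansion of $H[\cB_k]$, the set of endpoints reachable by rotations from any fixed endpoint has size $\ge\Omega(n)$, so the set of ``good'' reservoir-edges (those creating a longer path or a cycle) is at all times $\Omega(n^2)$ out of $O(n^2)$ candidate pairs in $\cB_k$; since each of the $J=\Theta(n)$ reservoir edges is an independent uniform candidate pair, w.h.p. we hit a good edge often enough to reach a Hamilton cycle — a standard union-bound/Chernoff estimate gives failure probability $o(n^{-3})$. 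The main obstacle is the bookkeeping of the conditioning: one must verify carefully that exposing ``does this sprinkled edge lie in $\cB_k$'' and fully exposing the non-$\cB_k$ sprinkled edges really does pin down $V'$ without revealing anything about the internal $\cB_k$-edges, and that the count $J$ of leftover uniform edges is linear with the required concentration; this, together with establishing the expansion and connectivity inputs via Lemma~\ref{lem-supercritical}, is the delicate part, whereas the rotation-extension step itself is routine once \eqref{it-J} and \eqref{it-uniform} are in place.
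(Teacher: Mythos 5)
Your proposal is broadly in the right spirit, but it diverges from the paper's proof in a way that matters, and one of its steps is imprecise. The paper's key observation for the supercritical regime is that here one can \emph{dispense with} the ``conditioning on the future'' machinery: it chooses $\cB_k = V(\Gk[H])$, which is entirely determined by $H$ alone. With this choice, the uniformity of the sprinkled edges inside $\cB_k$ is immediate (no delicate conditioning argument is needed), and the concentration of $J$ reduces to a plain binomial bound ($J$ dominates $\bin(n/8, 0.998)$ because $|\cB_k|>0.999n$). Your proposal instead imports the critical-regime definition of $\cB_k$ (vertices of $V'=V(\Gk[G])$ with at least $k$ $H$-neighbors inside $V'$), which depends on $V'$ and hence on $G$; you then need the full conditioning argument from the proof of Theorem~\ref{thm-ham-small-c}, including the verification that any re-placement of the $J$ hidden edges inside $\cB_k$ yields the same $V'$. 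That argument does work, but you are taking the harder road without gaining anything — and moreover Lemma~\ref{lem-supercritical} is stated precisely for the graph $G'$ obtained with the paper's choice $\cB_k = V(\Gk[H])$ (edges of $E(G)\setminus E(H)$ incident to $V'\setminus V(\Gk[H])$ are revealed), so you cannot invoke it verbatim for your larger $\cB_k$ without re-deriving the expansion estimate for your variant of $G'$.

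Separately, the step where you ``augment [the Hamilton cycle of $\Gk[H]$] by a path through $V'\setminus V(\Gk[H])$ \ldots so that we begin with a Hamilton path of $\Gk[G]$'' is not justified: connectivity of $\Gk[G]$ does not let you stitch the new core vertices onto the existing cycle to form a single Hamilton path. The paper handles this differently and correctly: it forms a 2-factor in $G'$ consisting of the Hamilton cycle through $\cB_k = V(\Gk[H])$ together with each vertex of $V'\setminus\cB_k$ as a trivial singleton cycle (at most $0.001n+1$ cycles in total), and then absorbs these cycles one at a time via the rotation--extension boosters of Lemma~\ref{lem-Posa}, using connectivity only to merge components when a cycle closes. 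Your rotation--extension phase is essentially this, so the error is in the setup, not the dynamics, but as written it is a gap.

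Finally, a small caveat on ``the hypothesis that $\Gk[G]$ is connected is used, to stitch the sporadic new core-vertices onto the existing cycle.'' The paper uses the connectivity hypothesis not for stitching into a path, but to justify case~(ii) of Lemma~\ref{lem-Posa}: whenever a cycle $C'$ is formed on the current path's vertex set, connectivity lets you find a path from $C'$ to some remaining cycle $C_i$ and unravel them both into a longer path. This is a more limited and cleaner use of the hypothesis than your proposed application.
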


Using well-known properties of the $k$-core, Theorem~\ref{thm-1} is
obtained as an immediate corollary of Theorems~\ref{thm-ham-small-c}
and~\ref{thm-ham-large-c}. Indeed, taking a union bound over the
interval $I_1=[\max\{k/2,10\} n,\, \frac76 k n]$ we can extend the
conclusion of Theorem~\ref{thm-ham-small-c} to hold with probability
$1-o(n^{-2})$ simultaneously for all $G_t$ with $t\in I_1$. Thus,
provided that $\tau_k$ falls within this interval, this implies
Hamiltonicity upon creation and up to $t \leq \frac76 kn$. The
explicit threshold in $\cG(n,p)$ for the sudden emergence of the
$k$-core for $k\geq 3$ is known (see,
e.g.,~\cites{Luczak91,PSW,PVW}) to be $c_k/n$ for $c_k = k +
\sqrt{k\log k } + O(\sqrt{k})$. Specifically in our
context, one can verify that $c_{15} \approx 20.98$ and $\max\{20,
k\} \leq c_k \leq 2 k$ holds for any $k\geq 15$. The corresponding
threshold $\tau_k$ for the emergence of the $k$-core along the
random graph process is concentrated around $c_k n /2$, and moreover
it is known (\cite{JL08}*{Theorem 1.4}) that $(\tau_k - c_k n
/2)/\sqrt{n}$ converges in distribution to a Gaussian
$\mathrm{N}(0,\sigma_k^2)$ for some (explicit) $\sigma_k^2>0$. In
particular, $\tau_k \in [\max\{k/2,10\}n,kn]$ except with
probability exponentially small in $n$, and altogether $G_t$ is
w.h.p.\ Hamiltonian for all $\tau_k \leq t \leq \frac76 kn$.

In order to maintain this property throughout the evolution of the
random graph process (in which the $k$-core grows and consequently
might not remain Hamiltonian), it suffices to do so up to $t = n\log
n$, as it is well-known that $\cG(n,m)$ for
$m=\frac{1+\epsilon}2n\log n$, for any constant $\epsilon>0$, is
w.h.p.\ $k$-connected and Hamiltonian (see,
e.g.,~\cites{Bollobas,JLR} and the references therein). A union
bound over the time interval $I_2=[\frac76k n - n/6,\, n\log n]$
extends the conclusion of Theorem~\ref{thm-ham-large-c} to hold with
probability $1-n^{-2+o(1)}$ simultaneously for all $G_t$ with $t\in
I_2$. With this probability we have that Hamiltonicity at time $t\in
I_2$ is carried to time $t+n/8$, unless the $k$-core at this later
time happens to be not connected. It was shown by {\L}uczak
(\cite{Luczak92}*{Theorem 5'}, see also~\cite{Luczak91}) that
w.h.p.\, throughout the entire random graph process $\{G_t\}$,
whenever the $k$-core is nonempty it is necessarily $k$-connected
and contains at least $n/5000$ vertices. It therefore follows that
Theorem~\ref{thm-ham-large-c} can extend the Hamiltonicity from an
interval $[a, a+n/8]$ for some $a\geq\frac76kn -n/6$ to hold w.h.p.\
for all $t\geq a$. As $\tau_k \leq kn$ w.h.p., with the above
conclusion from Theorem~\ref{thm-ham-small-c} we can take
$a=\lceil\frac76 kn -n/6\rceil$ and thus $G_t$ is w.h.p.\
Hamiltonian for all $t\geq\tau_k$, concluding the proof of
Theorem~\ref{thm-1}.

In the remainder of this section we prove
Theorems~\ref{thm-ham-small-c} and~\ref{thm-ham-large-c} modulo
Lemmas~\ref{lem-critical} and~\ref{lem-supercritical}, which are
stated next and whose proofs appear in Sections~\ref{subsec:gnp-near-tau} and~\ref{subsec:gnp-beyond-tau},
respectively.

\begin{lemma}\label{lem-critical}
Fix $k\geq15$ and let $G\sim \cG(n,m)$ for $m=c n/2$ such that
$\max\{k,20\} \leq c \leq \frac73 k$. With probability $1-o(n^{-3})$
the following holds for any $G'=(V',E')\subset G$ of minimum degree
$\delta(G')\geq k$ such that $|V'|\geq \frac45 n$ and the induced
subgraph of $G$ on $V'$ contains at most $|E'|+n/\log\log n$ edges:
\begin{compactenum}
  [(i)]
  \item \label{it-lem-1} the graph $G'$ is connected,
  \item \label{it-lem-2} every $X\subset V'$ of size $|X|\leq n/c^2$ satisfies $|N_{G'}(X)\setminus X| > 2|X|$, and
\item \label{it-lem-3} there exists a factor\footnote{By a slight abuse of notation, the factor described in Lemma~\ref{lem-critical} (which is a 2-factor minus a single edge) will also be referred to as a 2-factor for brevity.}
    in $G'$ consisting of a path $P$ and at most $n /\sqrt{\log n}$ cycles.
\end{compactenum}
\end{lemma}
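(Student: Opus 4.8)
The plan is to deduce parts~\eqref{it-lem-1} and~\eqref{it-lem-2} from properties of the host graph $G$ that make no reference to $G'$, and to obtain part~\eqref{it-lem-3} as a purely combinatorial consequence of~\eqref{it-lem-1}, \eqref{it-lem-2} and the hypothesis $\delta(G')\ge k$. The reduction works because the hypotheses on $G'$ are only used in a weak form: since every $v\in V'$ has at least $k$ neighbours of $G'$, hence of $G$, inside $V'$, the induced graph $G[V']$ has minimum degree at least $k$; and since $G'=(V',E')$ is obtained from $G[V']$ by deleting at most $n/\log\log n$ edges, any union $C$ of connected components of $G'$ satisfies $e_G(C,V'\setminus C)\le n/\log\log n$. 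Thus it suffices to prove that, with probability $1-o(n^{-3})$, the random graph $G\sim\cG(n,m)$ simultaneously (a)~contains no set $S$ with $|S|\le 3n/c^2$ and $e_G(S)\ge\tfrac52|S|$, and (b)~contains neither a set $C$ with $n/c^2\le|C|\le 2n/c$ and $e_G(C)\ge\tfrac12(k-1)|C|$, nor a pair of disjoint sets $A,B$ with $|A|,|B|\ge 2n/c$, $|A|+|B|\ge\tfrac45 n$ and $e_G(A,B)\le n/\log\log n$.

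Given~(a), part~\eqref{it-lem-2} is immediate: if $|N_{G'}(X)\setminus X|\le 2|X|$ for some $X$ with $|X|\le n/c^2$, then $S:=X\cup\bigl(N_{G'}(X)\setminus X\bigr)$ has $|S|\le 3|X|$, and, as every $x\in X$ keeps all of its $\ge k$ $G'$-neighbours inside $S$, we get $e_G(S)\ge e_{G'}(S)\ge\tfrac12 k|X|\ge\tfrac k6|S|\ge\tfrac52|S|$, contradicting~(a). I would prove~(a) by the first-moment bound
\[ \P\bigl[\exists\,S:\ |S|=s,\ e_G(S)\ge\tfrac52 s\bigr]\le\binom ns\binom{\binom s2}{\lceil 5s/2\rceil}(c/n)^{\lceil 5s/2\rceil}\le\bigl[(en/s)(ecs/(5n))^{5/2}\bigr]^{s} \]
(valid for $s\ge 6$, as fewer vertices cannot span so many edges); here $c\ge 20$ is exactly what makes the bracket a constant strictly below $1$ throughout $s\le 3n/c^2$, so summing over $s$ gives $o(n^{-3})$. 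Part~\eqref{it-lem-1} then follows too: a union $C$ of components of $G'$ with $0<|C|\le n/c^2$ contradicts~\eqref{it-lem-2}; for $n/c^2<|C|\le|V'|/2$ we have $e_G(C)\ge\tfrac12 k|C|-\tfrac12 e_G(C,V'\setminus C)\ge\tfrac12(k-1)|C|$ (as $|C|=\Omega(n)$), while at the same time $A:=C$ and $B:=V'\setminus C$ are disjoint with $|A|+|B|=|V'|\ge\tfrac45 n$, $|B|\ge|V'|/2\ge|C|$, and $e_G(A,B)\le n/\log\log n$; according to whether $|C|\le 2n/c$ or $|C|>2n/c$, one of these contradicts~(b). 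Finally~(b) is established by Chernoff bounds, split by the size of the sets: the ``dense medium set'' has a large-deviation cost of order $k|C|$, which beats the $\binom{n}{|C|}$ union bound as long as $|C|\le 2n/c$ (this is where $c\ge 20$, and $c\le\tfrac73 k$, enter), whereas for the two large sets $|A|\,|B|/n=\Omega(n)$ while $|A|+|B|\ge\tfrac45 n$, so the edge-expansion of $\cG(n,cn/2)$ with $c\ge 20$ forces $e_G(A,B)=\Omega(n)\gg n/\log\log n$ except with probability $e^{-\Omega(n)}$. A union bound over these $O(n)$ events then finishes~\eqref{it-lem-1} and~\eqref{it-lem-2}.

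It remains to deduce~\eqref{it-lem-3} from~\eqref{it-lem-1}, \eqref{it-lem-2} and $\delta(G')\ge k\ge 15$; this is the heart of the lemma, and the only place where the value of $k$ is used in an essential way. First I would find a $2$-factor of $G'$: passing to the bipartite double cover, Hall's condition holds for small sets directly from~\eqref{it-lem-2}, for the intermediate range by a Tutte-type deficiency argument that exploits $\delta(G')\ge k$, and near the full size by the complementary bounds (alternatively, one may invoke the known existence of a $2$-factor --- indeed a $(k-2)$-factor --- in the near-$k$-core $G[V']$). Given a $2$-factor with $t$ cycles, if $t>n/\sqrt{\log n}$ then more than half of its cycles have length at most $2\sqrt{\log n}$; for such a short cycle $C$ the $\ge(k-2)|C|$ edges of $G'$ leaving $C$, together with the expansion~\eqref{it-lem-2} applied to $V(C)$, force two of them to reach a common short cycle $C'$ in a position admitting a \emph{switch} --- exchanging one $2$-factor edge on $C$ and one on $C'$ for two crossing edges of $G'$ --- which merges $C$ and $C'$ and reduces $t$. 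Iterating the switch drives $t$ below $n/\sqrt{\log n}$, after which deleting a single edge of some remaining cycle produces the path $P$. The main obstacle is that~\eqref{it-lem-2} guarantees expansion only up to sets of size $n/c^2$, so the slack ``$\tfrac k6\ge\tfrac52$'' used for~\eqref{it-lem-2} and the amount of minimum degree available for the switching step must be kept in balance; this is what forces $k\ge 15$, and I expect a finer analysis of the switching step to permit smaller values of $k$.
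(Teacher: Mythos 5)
Your treatment of parts~\eqref{it-lem-1} and~\eqref{it-lem-2} is sound and is, modulo packaging, the same as the paper's: the paper's Claim~\ref{clm-G'-expansion} proves precisely the ``no set $S$ with $|S|\le 3n/c^2$ and $e_G(S)\ge\tfrac52|S|$'' assertion (your~(a)), and your~(b)-type argument is absorbed into Corollary~\ref{cor-tutte-small-sets} and Claim~\ref{clm-tutte-large-sets}. There is a genuine gap, however, in your plan for part~\eqref{it-lem-3}, and it has two pieces.

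First, the existence of the (near-)$2$-factor is not established. The bipartite double cover plus Hall route produces a perfect matching in $G'\times K_2$, but such a matching can use $(u,0)$--$(u,1)$ pairs and can pick both copies of an edge $uv$, so it corresponds to a $2$-factor with loops and doubled edges rather than a simple one; translating this into a simple $2$-factor needs an additional argument that you do not supply. The alternative you mention --- invoking the $(k-2)$-factor results of~\cite{PVW} and~\cite{CM} --- only holds for ``$k$ large enough'' (the paper uses them for exactly this reason in Lemma~\ref{lem-critical2}, which carries that hypothesis), not for $k\ge 15$, and moreover gives a factor in the $k$-core of $G$ rather than in $G'$. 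The paper instead verifies \emph{Tutte's $1$-factor condition} $\odd(H\setminus X)\le|X|$ for all $X\subset V'$, applies it to $G'$ and then again to $G'$ minus the first matching, and takes the union of the two near-perfect matchings as the factor; verifying the Tutte condition for sets of size beyond the reach of the small-set expansion --- which is the entire content of Claim~\ref{clm-tutte-large-sets} and where the constraints $c\le\tfrac73k$ and $k\ge 15$ are used most delicately --- is the real work of the lemma, and your proposal essentially leaves it unaddressed.

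Second, the cycle count. Your switching argument is both unsubstantiated (there is no guarantee that a short cycle sends two $G'$-edges into another short cycle in a position admitting a merging switch --- e.g.\ all of its out-edges could land on long cycles, and chords can eat into the ``$\ge(k-2)|C|$'' budget) and unnecessary. The paper uses a much simpler device, Claim~\ref{clm-max-disjoint-cycles}: in $\cG(n,c/n)$, the maximal number of vertex-disjoint cycles is at most $\tfrac12 n/\sqrt{\log n}$ except with probability $e^{-\sqrt{n}}$, by a first-moment bound for short cycles combined with Azuma on the vertex-exposure martingale. Since the cycles of any $2$-factor are vertex-disjoint, this gives the bound $n/\sqrt{\log n}$ on the number of cycles \emph{for free}, for any $2$-factor, with no reorganisation required. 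This is the idea your sketch is missing.
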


\begin{lemma}
  \label{lem-supercritical}
Fix $k\geq15$, let $H\sim \cG(n,m)$ for $ \frac16(7k-1)n \leq m \leq n\log n$ and let $G$ be obtained by
adding $n/8$ uniformly chosen new edges to $H$. Let $G'=(V',E') \subset G$ be the graph where
$V'=V(\Gk)$ and $uv\in E'$ for $u,v\in V'$ if either $uv\in E(H)$ or alternatively
 $uv\in E(G)\setminus E(H)$ and at least one of $u,v$ belongs to $V' \setminus V(\Gk[H])$. Then with probability $1-o(n^{-3})$
we have $|V'| \geq |V(\Gk[H])|> 0.999 n$ and every subset $X \subset V'$ of size $|X|\leq n/5$ satisfies $|N_{G'}(X) \setminus X| > 2|X|$.
\end{lemma}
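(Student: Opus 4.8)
The plan is to prove Lemma~\ref{lem-supercritical} by a standard first-moment / bad-event enumeration over candidate "small sets with poor expansion," while carefully accounting for the peculiar definition of the auxiliary graph $G'$ that mixes the edges of $H$ with the sprinkled edges of $G\setminus H$. First I would record the two deterministic facts we may freely use: (a) by {\L}uczak's result the $k$-core $\Gk[H]$ is nonempty, $k$-connected, and (since $m\geq \frac16(7k-1)n \geq \max\{k/2,10\}n$ is well inside the supercritical regime) has size $|V(\Gk[H])| > 0.999n$ w.h.p.\ --- in fact one should quote the sharper $k$-core size estimates from~\cite{PSW} to see that at $m\sim \frac76 kn$ the core already occupies a $(1-e^{-\Theta(k)})$-fraction of all vertices, comfortably above $0.999n$ for large $k$, and numerically above $0.999n$ for $k\geq 15$; (b) adding the $n/8$ sprinkled edges can only enlarge the core, so $V'\supseteq V(\Gk[H])$ and hence $|V'|>0.999n$ as claimed. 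The quantitative core-size bound is the place where we will need $k$ to be reasonably large, or alternatively verify the constant $0.999$ by the explicit formulas for $k=15$.

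The heart is the expansion statement: every $X\subset V'$ with $|X|\leq n/5$ has more than $2|X|$ external neighbors in $G'$. I would split into size ranges. For \emph{tiny} sets, say $|X|\leq n/c^2$ with $c\asymp k$, expansion already holds \emph{inside the $H$-edges on $V'$}: indeed $G'$ restricted to $V(\Gk[H])$ contains $\Gk[H]$, which is a $\cG(N,M,k)$ random graph with $N>0.999n$ and $M/N$ a large constant, and for such graphs a routine union bound shows that w.h.p.\ every set of at most $\epsilon_0 n$ vertices has $>2|X|$ external neighbors --- essentially the same computation as part~\eqref{it-lem-2} of Lemma~\ref{lem-critical} but with the easier target that here $m/n$ is a \emph{larger} constant (at least $\frac16(7k-1)$), so the bound is only more comfortable. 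The subtlety is that vertices of $V'\setminus V(\Gk[H])$ need not have $k$ neighbors \emph{within $V'$} via $H$-edges, so one should either (i) first peel these "new" vertices off: they form a forest-like structure hanging off $V(\Gk[H])$ when one looks only at $H$-edges among them, and each such vertex $v$ has, by the definition of $E'$, \emph{all} of its $G$-neighbors (old and new) retained as $G'$-edges to $V'$, so its degree in $G'$ into $V'$ is its full degree in $G$, which is $\geq 2$ and typically $\Theta(\log)$-ish for the sprinkled ones but could be as low as $2$; since these "new" vertices are $o(n)$ in number (at most $n/8$ sprinkled edges can recruit at most $n/4$ new vertices, but in fact far fewer, $o(n)$, since the core is already at $0.999n$), they can be absorbed into the analysis by treating a would-be bad $X$ as $X_0\cup X_1$ with $X_0\subseteq V(\Gk[H])$ and $X_1$ the new vertices, bounding $|N_{G'}(X_1)\setminus X_1|$ crudely and relying on $X_0$'s expansion.

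For \emph{medium-to-large} sets, $n/c^2 < |X|\leq n/5$, the argument is the usual one: if $|N_{G'}(X)\setminus X|\leq 2|X|$ then setting $Y=V'\setminus(X\cup N_{G'}(X))$ we have $|X|,|Y|=\Theta(n)$ with no $G'$-edges between $X$ and $Y$; in particular there are no $H$-edges between $X$ and $Y$. But $\cG(n,m)$ with $m\geq \frac16(7k-1)n$ w.h.p.\ has an edge between any two disjoint linear sets $X,Y$ with $|X|+|Y|\geq (1-\epsilon_1)n$ (a one-line first-moment bound: the number of pairs is $\leq 3^n$, the probability a fixed pair has no crossing edge is at most $(1-|X||Y|/\binom n2)^m \leq \exp(-2m|X||Y|/n^2)$, which for $|X|\geq n/c^2$ and $|Y|\geq n-|X|-2|X|-0.001n \geq (0.6-o(1))n$ beats $3^{-n}$ once $m/n$ exceeds an absolute constant --- here $m/n\geq \frac16(7k-1)\geq 16$ suffices). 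Here it is crucial that $|X\cup N_{G'}(X)|\leq 3|X|\leq \frac35 n$, so $|Y|\geq 0.999n - \frac35 n > \frac13 n$ is genuinely linear. One should perform this union bound with $m$ as small as possible, $m=\frac16(7k-1)n$, and note monotonicity takes care of all larger $m\leq n\log n$ (more edges only help non-expansion fail), although strictly one must also union over which value of $m$ is relevant since the statement is for fixed $m$ --- but since $m$ is a parameter of the lemma, not random, this is automatic.

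The main obstacle I anticipate is \emph{not} any single estimate but the bookkeeping around the hybrid graph $G'$: one must be careful that the randomness of the $n/8$ sprinkled edges is still "fresh enough" after the conditioning implicit in the statement. In this lemma, however, $G$ is simply $H$ plus $n/8$ uniform new edges with \emph{no conditioning on the future core}, so $G\sim\cG(n,m+\lfloor n/8\rfloor)$ exactly, and $V'$ is a deterministic function of $G$; thus all probabilities are genuine $\cG(n,m')$ probabilities and the union bounds above apply verbatim. The only real care needed is that "external neighbors in $G'$" is a weaker notion than "external neighbors in $G[V']$" for the new vertices --- but we have arranged the definition of $E'$ so that it is in fact \emph{not} weaker for those vertices (they keep all their $G$-edges), and for old vertices $v\in V(\Gk[H])$ the set $N_{G'}(v)\cap V'$ contains $N_H(v)\cap V'$, which already has $\geq k - o(n)\cdot(\text{error})$ elements; more precisely every old vertex has $\geq k$ neighbors inside $V(\Gk[H])$ via $H$, all of which survive into $E'$. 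So the expansion we prove for the subgraph $(V', H\text{-edges})$ transfers to $G'$, and the lemma follows. I would present it in exactly this order: quote core size and $k$-connectivity; reduce to expansion of the $H$-subgraph on $V'$; handle new vertices by peeling; do the tiny-set union bound; do the linear-set "no empty bipartite cut" union bound; collect the failure probabilities (each $e^{-\Omega(n)}$ or $n^{-\omega(1)}$, hence $o(n^{-3})$).
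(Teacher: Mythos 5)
Your overall template --- core size plus two expansion ranges (small and linear) plus the observation that the odd-looking definition of $E'$ is engineered precisely so that every vertex of $V'$ has all $k$ of its ``certifying'' neighbours inside $G'$ --- matches the paper's structure. But the execution diverges in the one place that actually carries the technical weight, and your version of it has a quantitative hole.

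The paper handles the range $\frac45\lambda^{-5/3}n \le |X| \le n/5$ by a two-step trick you do not anticipate: Claim~\ref{clm-expansion-large-c} proves a strictly stronger $\frac52$-expansion for $X$ of that size \emph{in the full graph $H$ on all $n$ vertices} (not restricted to $V'$), and then Claim~\ref{clm-core-size} together with the relation~\eqref{eq-gamma-theta-relation}, $\gamma(k,\lambda) \le \frac25\lambda^{-5/3}$, shows that at most $\gamma n \le \frac12|X|$ of those external neighbours can lie outside $V'$; subtracting yields $2|X|$. The nontrivial part is verifying $\gamma\le\frac25\lambda^{-5/3}$ uniformly over $\lambda\ge(7k-1)/3$ and $k\ge15$, which the paper does by reducing to a single function $\xi_k$ and proving $\xi_k/\xi_{k-1}<0.95$. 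Your replacement --- a ``no empty bipartite cut inside $V'$'' union bound --- can in principle be made to work, but your calculation does not close. You use ``number of pairs $\le 3^n$'' and ``$|Y|\ge(0.6-o(1))n$'' uniformly; with these bounds, the estimate fails at the small end of the range. Concretely, at $|X|=\theta n$ with $\theta=\frac45\lambda^{-5/3}$ and $\lambda\approx35$ (i.e.\ $k=15$), the union count is $\binom{n}{x}\binom{n}{2x}\approx e^{x(3\log(1/\theta)+3-2\log 2)}\approx e^{22.9x}$ while $\exp(-2(m/n)\cdot 0.6\,x)\approx e^{-20.8x}$, so the product tends to infinity; only by using the size-dependent $|Y|\ge(1-3x/n-o(1))n$ in each summand (which you do not do) does the exponent become negative, and even then it is tight --- roughly $\lambda > 5\log\lambda + O(1)$ is exactly the condition one needs, which is not ``an absolute constant $\geq 16$''. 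You should also be aware that the union bound must be phrased over \emph{deterministic} pairs $(X,N)$ with a statement about edges of $H$ into $[n]\setminus(X\cup N)$, since $V'$ is itself random; the paper sidesteps this by making Claim~\ref{clm-expansion-large-c} a statement about $H$ on $[n]$ and Claim~\ref{clm-G'-expansion} a statement about every subgraph of $G$ with $\delta\ge k$.

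Two smaller points. First, your small-set treatment (``peel the new vertices off, split $X=X_0\cup X_1$'') is unnecessarily contorted: the clean move, which your own observation about $E'$ already implies, is that $\delta(G')\ge k$ and $G'\subset G$, so Claim~\ref{clm-G'-expansion} (applied at density $\lambda+\frac14$) covers small sets in $G'$ directly. Second, you hand-wave ``$>0.999n$'' via~\cite{PSW} or {\L}uczak; the paper instead proves Claim~\ref{clm-core-size}, giving $\gamma(k,\lambda)<10^{-3}$ explicitly, and that same $\gamma$ feeds into~\eqref{eq-gamma-theta-relation}, so the two tasks are not independent --- the constant $0.999$ and the medium-set expansion are proved from the same estimate, which is part of why the argument is tight.
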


\begin{proof}[\textbf{\emph{Proof of Theorem~\ref{thm-ham-small-c}}}]
Given two graphs $H \subset G$ on the same vertex set $V$,
define the partition of the vertex set of $\Gk$, the $k$-core of $G$, into $\cB_k=\cB_k(G,H)$ and $\cC_k =\cC_k
(G,H)$ as follows:
\begin{align}
  \label{eq-N-S-def}
  \cB_k = \left\{ u\in V(\Gk)\;:\; \left|N_{H}(u) \cap V(\Gk)\right| \geq k \right\}\,,\qquad \cC_k = V(\Gk) \setminus \cB_k\,.
\end{align}
That is, $\cB_k$ consists of all vertices of $\Gk$ which possess at least $k$ neighbors within $V(\Gk)$ already in the subgraph $H$.
It is easily seen that $\cC_k$ is empty iff $V(\Gk[H])=V(\Gk)$, and that every vertex of $\cC_k$ must have
an edge incident to it in $E(G)\setminus E(H)$. We will be interested in settings where $\Gk$ is nonempty and
$|E(G)\setminus E(H)|=o(n)$, whence $\cB_k$ will comprise the bulk of the $k$-core of $G$.

Let $\cS=(e_1,\ldots,e_m)$ be a uniformly chosen ordered subset of $m$ edges out of the $\binom{n}2$ possible ones on the vertex set $V=[n]$,
and let $G$ and $H$ be the graphs on the vertex set $V$ with edge sets
\begin{align*}
  E(G)&=\{e_1,\ldots,e_m\}\,,\\
  E(H)&=\{e_1,\ldots,e_{m'}\}\qquad\mbox{ where }\qquad m' = m - \frac{n}{\log\log n}\,.
\end{align*}
As $G\sim\cG(n,m)$, if $\Gk$ is empty then there is nothing left to prove. We therefore assume otherwise and aim to establish
that $\Gk$ is Hamiltonian with probability $1-o(n^{-3})$.
The key to the proof will be to examine the random variables $\cB_k(G,H)$ and $\cC_k(G,H)$ as defined above.
Already we see that $|\cC_k| \leq 2(m-m') = 2n/\log\log n$, which would make it negligible compared to the linear size of $\Gk$.

Denote the indices of the subset of the final $m-m'$ edges having both endpoints in $\cB_k$ by
\begin{equation}
  \label{eq-cJ-def}
  \cJ = \left\{ m' < j \leq m \;:\: e_j = (u,v)\mbox{ for some }u,v\in\cB_k\right\}\,,
\end{equation}
and let $J = |\cJ|$.
We claim that upon conditioning on $\cB_k$, $\cC_k$ (such that $\cB_k\cup\cC_k \neq\emptyset$ by our assumption) as well as $\cJ$ and all the edges $\cS^* = \{ e_j : j \notin \cJ\}$, the remaining $J$ edges of $\cS$ are uniformly distributed over all edges missing from $\cB_k$ (that is, edges with both endpoints in $\cB_k$ that did not appear among $e_1,\ldots,e_{m'}$). To establish this it suffices to show that adding any possible set of $J$ new edges with endpoints inside $\cB_k$ to $\cS^*$ would result in a graph such that $\cB_k \cup \cC_k$ is consistent with our conditioning (since each such extension would have the same probability, and the partition $V'=\cB_k\cup\cC_k$ only depends on the graph $H$ and the vertex set $V'$).

Indeed, letting $F=(V,\cS^*)$ we argue that $V(\Gk[F])=V(\Gk)$. The fact that $V(\Gk[F])\subset V(\Gk)$ follows trivially from having $\cS^* \subset \cS$.
At the same time, $V(\Gk)\subset V(\Gk[F])$ since every $u\in \cC_k(G,H)$ has $N_F(u) = N_G(u)$ (which contains, in particular, at least $k$ vertices from $V(\Gk)$) whereas every $u\in\cB_k(G,H)$ has $ N_H(u) \subset N_F(u)$ and $\left|N_H(u)\cap V(\Gk)\right|\geq k$
by the definition of $\cB_k(G,H)$. It is now clear for any subset $S$ of edges whose endpoints all lie within $\cB_k(G,H)$ (or even within $V(\Gk)$ for that matter), the vertex set of the $k$-core of $F'=(V,\cS^* \cup S)$ remains equal to $V(\Gk)$. That is, every extension of $\cS^*$ via $J$ new internal edges of $\cB_k$ is consistent with our conditioning, as claimed.

A delicate point worth noting is that the above statement would be false if we were to directly take $m=\tau_k$, the hitting time for the emergence of the $k$-core. In that case, it could occur that adding certain edges between the vertices of $\cB_k$ to the graph $F=(V,\cS^*)$ would induce a different $k$-core to appear at some earlier time $j < m$, thus violating our conditioning. This issue disappears once we fix $m$ deterministically, thereby guaranteeing that no edges within $\cB_k$ would be forbidden.

The next crucial step is controlling the random variable $J$, which counts the number of edges within $\cB_k$ between times $m'+1,\ldots,m$.
Our goal is to show that $J / (m-m')$ is uniformly bounded from below, that is, that a constant fraction of the edges $e_{m'+1}\,\ldots,e_m$ are internal edges of $\cB_k$. Intuitively, one could hope that the distribution of the edges in $E(G)\setminus E(H)$ would be close to uniform. In that case, since the graphs $G,H$ under consideration are such that $\cB_k$ is of linear size,
standard concentration arguments would yield the sought estimate except with a probability that is exponentially small in $n$.
While such an argument would be valid for a \emph{fixed} set of vertices in $H$, unfortunately the set $\cB_k$ is random and \emph{does} depend on $e_{m'+1},\ldots,e_m$.

For instance, consider the situation where conditioned on $H$ and
then on the partition $\cB_k \cup \cC_k$ we have that $\cC_k =
2(m-m')$. This occurs when the edges $e_{m'+1},\ldots,e_m$ form a
matching on the vertices of $\cC_k$ (so as to accommodate the degree
constraints of all $2(m-m')$ of its vertices). Rare as this event
may be, it demonstrates the possibility that $J=0$ (even
deterministically) in this conditional space. Another example of the
delicate dependence between $J$ and $\cB_k$ is the following:
Suppose that instead of $\cB_k$ we would work with its variant
$\cB'_k$, consisting of all vertices of $V(\Gk)$ whose set of
neighbors within $V(\Gk)$ is the same in $G$ as it is in $H$. Then
$\cB'_k \subset \cB_k$ and these two sets differ by at most
$m-m'=o(n)$ vertices, and yet the variable $J'$ corresponding to
$\cB'_k$ satisfies $J'=0$ by definition.

As the next claim shows, it is possible to control the random variable $J$ despite the dependence between the $\cB_k$ and $E(G)\setminus E(H)$.

\begin{claim}
  Let $J=|\cJ|$ for $\cJ$ as defined in~\eqref{eq-cJ-def}. Then
  \[ \P\left( J \geq n/(100 \log\log n)\right) = 1-O(n^{-9})\,.\]
\end{claim}
\begin{proof}
The idea underlying this approach would be to approximate $\cB_k$ via another random set whose identity is completely determined by $G$ (as opposed to the combination
of $G$ and $H$).

Let $A_1 = A_1(G)$ denote the event that there is a subset $S$ of $s=\lfloor \frac25 n\rfloor$ vertices
  of $G$ which spans at most $r = m/50$ edges.
  Taking $p=c/n$ for $c=2m/n$ we appeal to the standard correspondence between $\cG(n,m)$ and $\cG(n,p)$, and
  since $r \sim \frac1{16}s^2 p$ we get that
  \begin{align*}
 \P(A_1) &\leq (r+1) \binom{n}s \binom{\binom{s}2 }{r} p^r (1-p)^{\binom{s}2-r}
 \leq \left((1+o(1))\frac{en}{s} \left(\frac{e s^2 p}{2r}\right)^{r/s} e^{-sp/2} \right)^s \\
 &\leq \left(\left(\tfrac52 e + o(1)\right) (8 e)^{c/40} e^{- c/5} \right)^s =
 \left(\left(\tfrac52 e + o(1)\right) (8 e^{-7})^{c/40}  \right)^s < (3/4)^s\,,
  \end{align*}
where the last inequality uses our hypothesis that $c\geq 20$. It follows that the probability
  of encountering $A_1$ is at most $O(\exp(-a_0 n))$ for some absolute $a_0>0$.

Let $X_l=X_l(G)$ denote the number of vertices of degree precisely $l$ in $\Gk$ for a fixed integer $l\geq k$.
It is known (\cite{CW}*{Corollaries~2 and~3}) that the fraction of vertices of degree $l$ within the $k$-core converges to $\P(Z=l)$ where $Z$ is a Poisson random variable with mean $\mu$ for some explicitly given $\mu=\mu(k)>k$.
More precisely, for any fixed $\epsilon>0$,
\begin{equation}
  \label{eq-core-degrees}
\P\left(\left| X_l - \frac{e^{-\mu} \mu^l}{l!}n\right|>\epsilon n\right) = O\left(e^{-n^{a_1} }\right)\,,
\end{equation}
where $a_1=a_1(k) >0$ is fixed.
One should note that~\cite{CW} offers (sharper) estimates only for the supercritical range $c>c_k$. However, one may extend these to $c \sim c_k$, given that the $k$-core is nonempty at that point, as follows. Fixing some arbitrarily small $\delta >0$, at $c'=c_k + \delta$ these bounds yield concentration for $X_l$ within a window of width $\epsilon n$ (for any fixed $\epsilon>0$) except with an exponentially small probability. Next, appealing to the known estimates on $|V(\Gk)|$ and $|E(\Gk)|$ throughout the critical window for the emergence of the $k$-core (see~\cite{JL08}*{Theorem 1.3}), it is known that the difference in these two random variables between times $c n/2$ and $c' n/2$ is at most some $\epsilon n$ except with probability $O(e^{-n^{a_1}})$ for some $a_1>0$ fixed, where $\epsilon$ can be made arbitrarily small via selecting a suitably small $\delta$. Consequently, the value of $X_l$
cannot change by more than $2\epsilon n$ along this interval (accounting for lost edges as well as lost vertices via the change in $V(\Gk)$ and $E(\Gk)$, respectively), thus
establishing~\eqref{eq-core-degrees} with a window of $3\epsilon n$. To complement this bound, observe that for fixed $l\geq 15$,
\[\frac{e^{-\mu} \mu^l}{l!} \leq \frac{e^{-\mu} \mu^l}{\sqrt{2\pi l}(l/e)^l} = \frac{e^{-(\mu-l)}(1+\frac{\mu-l}l)^l}{\sqrt{2\pi l}} \leq \frac1{\sqrt{2\pi l}} < \frac18\,,\]
where the first inequality above followed from Stirling's formula and the last one holds for $l\geq 15$. Altogether, we can infer that the number of vertices whose degree belongs to $\{k,k+1,k+2\}$ satisfies
\begin{align*}
    \P\left( X_k + X_{k+1} + X_{k+2} \leq \tfrac25 n\right) = 1-O\left(e^{ - a_1 n}\right)\,.
 \end{align*}
At the same time, known properties of the $k$-core upon its emergence (recall that $\Gk$ is nonempty by assumption), namely the explicit formula for its typical initial size as well its concentration around its mean (see, e.g.,~\cite{PSW}*{Theorem 3}), imply that for some absolute $a_2>0$ and any fixed $k\geq 15$,
\begin{align*}
 \P\left(|V(\Gk)| \geq \tfrac45 n \right) = 1 - O\left(e^{- n^{a_2}}\right)\,.
 \end{align*}
Let $A_2=A_2(G)$ denote
the event that $X_k+X_{k+1}+X_{k+2}> \frac25 n$ or $|V(\Gk)| <
\frac45 n$.
The last two estimates then imply that $\P(A_2) \leq O(e^{-n^{a_2}})$,
and on the event $A_2^c$ we see that at least $\frac25 n$ vertices have
degree at least $k+3$ in $\Gk$.

A final ingredient we need is some control over vertices of large degree in $G$. Set
\begin{align}\label{eq-KRM-def}
 \kappa_1 = \log\log n\,,\qquad \kappa_2 = 10\frac{\log n}{\log\log n}\,,\qquad \rho = \frac{n}{\log^2 n}\,,
 \end{align}
and let $A_3=A_3(G)$ denote the event that either $\Delta(G) \geq \kappa_2$, where $\Delta(G)$ denotes the maximum degree of $G$, or there are $\rho$ vertices of $G$ whose degree exceeds $\kappa_1$.
It is well-known that whenever $m/n$ is uniformly bounded from above, the maximum degree in $\cG(n,m)$ is at most $(1+o(1))\frac{\log n}{\log\log n}$ w.h.p.\ (see,
e.g., \cite{Bollobas}*{\S3}), and moreover, the probability this maximum degree would exceed $\kappa_2$ as given in~\eqref{eq-KRM-def} is at most $O(n^{-9})$.
Turning to the probability that a given $v\in V$ has degree at least $\kappa_1$ in $G$, again working with the corresponding $\cG(n,p=c/n)$ model with $c=2m/n$ we find it to be
\[ \P(\bin(n-1,p)\geq \kappa_1) =  O\left(c^{\kappa_1} / {\kappa_1}!\right) = e^{-(1-o(1))\kappa_1 \log \kappa_1} < e^{-5\kappa_1} = (\log n)^{-5}\,,\]
where the strict inequality holds for large $n$.
We will now argue that the probability of encountering $\rho$ such vertices (recall that  $\rho= n^{1-o(1)}$) would be $\exp(-n^{1-o(1)})$. Indeed, the probability of encountering a set $S$ of $\rho$ vertices such that the induced subgraph on $S$ contains more than $10\rho$ edges is at most
\[
\binom{n}{\rho} \binom{\binom{\rho}{2}}{10\rho} (c/n)^{10\rho} \leq
\left( \left(\frac{e n}{\rho}\right)^{\frac1{10}} \frac{e \rho c}{20 n} \right)^{10\rho} = \left( \left(e \log^2 n\right)^{\frac1{10}}  \frac{e  c}{20 \log^2 n} \right)^{10\rho} = e^{-n^{1-o(1)}}\,.
\]
At the same time, in a set $S$ of $\rho$ vertices whose degrees are all at least $\kappa_1$ and where additionally the induced subgraph on $S$ has at most $10\rho$ edges we must have at least $\rho (\kappa_1 - 20)$ edges in the cut between $S$ and $V(G)\setminus S$. Similarly to the previous calculation, the probability of this event is at most
\[ \binom{n}{\rho} \binom{\rho(n-\rho)}{\rho(\kappa_1-20)} (c/n)^{\rho(\kappa_1-20)} =
\left(e \log^2 n \left(\frac{e (n-\rho)c}{(\kappa_1-20)n}\right)^{\kappa_1-20} \right)^{\rho}
= e^{-(1-o(1))\rho \kappa_1\log\kappa_1} = e^{-n^{1-o(1)}}\,.
\]
Altogether we deduce that $A_3$ occurs with probability at most $O(n^{-9})$ with room to spare.

Let $U\subset V(\Gk)$ be the set of all vertices whose degree in $\Gk$ is at least $k+3$.
It is important to note that $U$ as well as the properties addressed by the events $A_1,A_2,A_3$ are entirely determined by the edge set $E(G)=\{e_1,\ldots,e_m\}$ regardless of the order in which they appeared (as opposed to $\cB_k,\cC_k$ which were a function of $G$ and $H$).
Condition on this edge set $E(G)$, unordered, and further condition on the event $A^c =A_1^c \cap A_2^c \cap A_3^c$, recalling that $\P(A^c) = 1-O(n^{-9})$
by the above analysis.

Under this conditioning, the edge set $E(H)$ is obtained as a uniform subset of $m'$ of these edges, hence the variable
\[ J_U = \#\{ uv \in E(G)\setminus E(H) \;:\; u,v\in U\}\]
is hypergeometric. Specifically, $J_U$ is the result of $m-m'=n/\log\log n$ samples without repetition with the target population having proportion at least $1/50$, since $A_1^c$ guarantees at least $m/50$ internal edges within any set of $\frac25 n$ vertices whereas $A_2^c$ implies that $|U|\geq \frac25 n$.
Hoeffding's inequality for hypergeometric variables~\cite{Hoeffding} now tells us that the probability that for some absolute constant $a_3>0$,
\[ \P( J_U < (m-m')/75\mid E(G)\,,\,A^c) \leq e^{-a_3 (m-m')} = e^{-n^{1-o(1)}}\,,\]
where the last inequality holds for large enough $n$.

Now let $W_1\subset U$ denote the subset of vertices of $U$ which have degree at most $\kappa_1$ in $G$ and are incident to at least $4$ edges of $E(G)\setminus E(H)$.
The number of edges in the sample $E(G)\setminus E(H)$ that are incident to a fixed set of $s$ such vertices is stochastically dominated by a binomial variable $\bin(m-m', s \kappa_1 / m')$, as there are overall at most $s \kappa_1$ edges incident to this set in $G$. Hence,
\begin{align*}
\P(|W_1| \geq \rho\mid E(G)\,,\, A^c) &\leq \P\left(\bin\left(m-m', \rho \kappa_1/m'\right)\geq 2\rho\right) \leq \binom{m-m'}{2\rho}\left(\frac{\rho \kappa_1}{m'}\right)^{2\rho} \\
&\leq \left( \frac{e n}{2\rho \log\log n} \frac{\rho \log\log n}{(10-o(1)) n}\right)^{2\rho} = \left(\frac{e}{20}+o(1)\right)^{2\rho} = e^{-n^{1-o(1)}}\,,
 \end{align*}
 where the first inequality in the second line used the hypothesis $m' \sim m \sim cn/2 \geq 10 n$.
Conditioned on this event, our assumption on the maximal degree of $G$ implies that the number of edges incident to $W_1$ in $G$ is at most $\kappa_2 \rho = o(n/\log
n)$.
Similarly, letting $W_2\subset U$ be the set of vertices whose degree in $G$ exceeds $\kappa_1$, we know by assumption that $|W_2|\leq \rho$ and that the number of
edges incident to this set in $G$ is then at most $\kappa_2 \rho = o(n/\log n)$.

Letting $U^* = U \setminus (W_1\cup W_2)$, by definition we have that every vertex of $U^*$ is incident to at most $3$ edges of $E(G)\setminus E(H)$.
At the same time, the degree in $\Gk$ of every $v\in U$ is at least $k+3$, and consequently each $v\in U^*$ has at least $k$ neighbors among $V(\Gk)$ in $H$,
hence $U^* \subset \cB_k$. The above estimates show that, except with probability $\exp(-n^{1-o(1)})$, there are at least $n/(75\log\log n)$ edges with both endpoints in $U$ (counted by the variable $J_U$ above) whereas the total number of edges incident to $W_1 \cup W_2 = U\setminus U^*$
is $o(n/\log n) = o(n/\log\log n)$. Therefore,
\[ \P\left( J < n/(100 \log\log n)\mid E(G)\,,\,A^c\right) < e^{-n^{1-o(1)}}\,,\]
and the desired (unconditional) estimate on $J$ now follows from the fact that $\P(A^c)=1-O(n^{-9})$.
\end{proof}

We can now turn to Lemma~\ref{lem-critical}, taking the target graph $G'$ to be $\Gk[F]$ for $F=(V,\cS^*)$.
As mentioned above, it is known that whenever the $k$-core for $k\geq 15$ is nonempty, its size is at least $\frac45 n$ except with probability exponentially small in $n$. Recalling that $V(\Gk[F])=V(\Gk)$ and that $E(G)\setminus E(F)$ contains $J \leq n/\log\log n$ internal edges of $\cB_k$, the requirements of the lemma are satisfied.
This lemma will be used in tandem with the following well-known corollary of the classical rotation-extension technique of P{\'o}sa \cite{Posa} (see, e.g.,~\cite{KLS}*{Corollary 2.10}, which is formulated slightly differently though the exact same proof implies both statements), where here and in what follows the length of paths/cycles will refer to the number of edges in them:
\begin{lemma}\label{lem-Posa}
Let $r$ be a positive integer, and let $G=(V,E)$ be a graph where for every subset $R\subset V$ with $|R|<r$ we have $|N_G(R)\setminus R|\ge 2|R|$.
Then for any path $P$ in $G$, denoting its length by $h$,
\begin{compactenum}
  [(i)]
  \item\label{it-Posa-longer-path} there is path $P'$ of length $h+1$ in $G$ containing all vertices of $P$ plus a new endpoint $u\notin P$, or
  \item\label{it-Posa-closing-cycle} there is a cycle $C'$ in $G$ of length $h+1$ on the same vertex set as $P$, or
  \item\label{it-Posa-boosters} there are at least $r^2/2$ non-edges in $G$ such that if any of them is turned into an edge, then the new graph contains a cycle $C'$ as above, i.e., an $(h+1)$-cycle on the same vertex set as $P$.
\end{compactenum}
\end{lemma}

Lemma~\ref{lem-critical} enables us to apply the rotation-extension
technique to $G'$ with $r=n/c^{2}$. Let $P$ be the path in $G'$ such that $V(G')\setminus P$ can be covered by $\ell \leq n/\sqrt{\log n}$ cycles as per the conclusions of Lemma~\ref{lem-critical}. Let $h$ denote the length of $P$ (i.e., $P$ has $h$ edges) and let $\{C_1,\ldots,C_\ell\}$ denote the cycles covering $V(G')\setminus P$.  We now argue that either $G'$ is Hamiltonian or there exist at least $r^2/2$ edges --- to be referred to as \emph{boosters} --- the addition of each of which would create either a Hamilton cycle or a path $P'$ containing all vertices of $P$ in addition to one of the $C_i$'s. Thereafter, we will sprinkle the edges $e_{i_1},\ldots,e_{i_J}$ to repeatedly absorb all $\ell$ cycles into $P$ and then form a Hamilton cycle.

To justify the above claim, examine the three possible conclusions of Lemma~\ref{lem-Posa} above:
\begin{compactenum}[(i)]
  \item If the path $P$ can be extended to a path $P'$ ending at some vertex $u \in V(G')\setminus P$, delete an edge to unravel the cycle $C_i$ containing $u$ into a path, and concatenate that path to $P'$.
  \item If there is a cycle $C'$ on the same vertex set as $P$, either $C'$ is a Hamilton cycle in $G'$ as required, or the connectivity of $G'$ would imply that $C'$ is connected by a path to some $u\in V(G')\setminus P$. In the latter case, unravel both $C'$ and the cycle $C_i$ of $u$ in the obvious way into a single path.
  \item Otherwise, there are at least $r^2/2$ edges, the addition of each of which would lead to Case~\eqref{it-Posa-closing-cycle}.
\end{compactenum}

Overall, letting $G'_0=G'$ and $G'_j$ be the result of adding the edge $e_{i_j}$ to $G'_{j-1}$ for $j=1,\ldots,J$, we observe that
the above mentioned properties of $G'$ are satisfied for each $j$ (being closed under the addition of edges) and so encountering $\ell + 1 \leq n/\sqrt{\log n}+1$ rounds
for which $e_{i_j}$ is a booster in $G'_{j-1}$ would guarantee Hamiltonicity.

The proof can now be concluded from arguing that, for any $j=1,\ldots,J$, the probability that $e_{i_j}$ is a booster in $G'_{j-1}$ is uniformly bounded from below. Indeed, $e_{i_j}$ is uniformly distributed over all missing edges in the induced subgraph of $G'_{j-1}$ on $\cB_k$. Of course,
$V(G'_{j-1}) = V(\Gk) = \cB_k \cup \cC_k$ so there are $o(n^2)$ edges incident to $\cC_k$ in $G'_{j-1}$. It follows
that almost all of the boosters of $G'_{j-1}$ are edges whose both endpoints lie in $\cB_k$, and the probability that $e_{i_j}$ belongs to this set is at least $(1-o(1))(r^2 /2) / \binom{|\cB_k|}2 \geq (r/n)^2 \geq 1/c^4$.
Hence, conditioned on the event $J \geq n/(100\log\log n)$, the number of boosters we collect (formally defining every new edge as a booster upon achieving Hamiltonicity)
stochastically dominates the random variable $Y\sim \bin(n/(100\log\log n), 1/c^4)$, which in turn satisfies $\P(Y \geq c' n/\log\log n) \geq 1 - \exp(-n^{1-o(1)})$ for some $c'>0$ (that depends on $c$). In particular, $Y > n/\sqrt{\log n} + 1$ with probability $1-\exp(-n^{1-o(1)})$ and the proof is complete.
\end{proof}

\begin{proof} [\textbf{\emph{Proof of Theorem~\ref{thm-ham-large-c}}}]
The proof will follow the same line of argument used to prove Theorem~\ref{thm-ham-small-c}, albeit in a somewhat simpler setting as we can utilize the Hamilton cycle in $\Gk[H]$ as
the foundation of our 2-factor, and furthermore, we can afford to select the subset $\cB_k$ of vertices --- the bulk of the $k$-core to be used for sprinkling --- based on the
information of $H$ (in lieu of conditioning on future information from the random graph process). Namely, we will choose $\cB_k$ to be the vertex set of $\Gk[H]$. Recall that a key
element in the proof of Theorem~\ref{thm-ham-small-c} revolved around controlling the random variable $J$ despite its delicate dependence on $\cB_k$. As mentioned there, this issue
would be circumvented should $\cB_k$ be determined already by graph $H$, precisely our present situation.

Let $H\sim\cG(n,m)$ for $m\geq \frac16(7k-1)n$. Ignoring floors and ceilings for brevity here and throughout this proof, let $e_1,\ldots,e_{n/8}$
be a uniformly chosen subset of the edges missing from $E(H)$, and let $G$ be the result of adding
these edges to $H$.

Suppose that $\Gk[H]$ is Hamiltonian (otherwise there is nothing to prove), let $\cB_k= V(\Gk[H])$, and observe that thanks to Lemma~\ref{lem-supercritical} we already
know that $|\cB_k| > 0.999 n$ with probability $1-o(n^{-3})$.
To exploit the other conclusions of that lemma, we must first recover the graph $G'$ as defined there.

As before, we will condition on the graph $G'$ as well as on the random variable $J$ counting the number of
edges with both endpoints in $\cB_k$, without revealing the identity of those edges.
As we mentioned, the fact that $\cB_k$ is determined by $H$ will readily yield the sought lower bound on $J$.
Formally, for each $t=1,\ldots,n/8$, expose whether the new edge $e_t=uv$ (uniformly distributed out of all missing ones)
has either $u\notin \cB_k$ or $v\notin \cB_k$, and if so, expose its endpoints $u,v$ themselves.
This process reveals the graph $G'$ and random variable $J$, yet we see that each edge is uniformly distributed out of all missing ones
(as we no longer have a conditioning involving the graph process at future times as in the proof of Theorem~\ref{thm-ham-small-c}).
At all times we have at most $m \leq n\log n + n/8$ edges in our graph (as per the scope of Theorem~\ref{thm-ham-large-c}), therefore
the probability that $e_t$ has both its endpoints in $\cB_k$ is at least $\big(\binom{0.999 n}2 - 2n\log n \big)/\binom{n}2 > 0.998$,
with the last inequality being valid for large enough $n$. That is to say, the random variable $J$ counting the number of such edges that we encounter)
stochastically dominates a binomial variable $Y \sim \bin(n/8, 0.998)$, whence standard concentration arguments imply that $J \geq n/10$ except with probability exponentially small in
$n$. Finally, by the definition of the process above, each edge $e_t$ that was accounted for in $J$ is a uniform edge among all those missing from
the induced subgraph on $\cB_k$ at the end of the previous iteration.

Applying Lemma~\ref{lem-supercritical} to $G'=(V',E')$, we find that
with probability $1-o(n^{-3})$, every subset $X$ in $V'$ of size
$|X|\le n/5$ has at least $2|X|$ external neighbors in $G'$. We
claim that it is moreover connected with the same probability:
Indeed, the only edges in $E(\Gk)\setminus E(G')$ are edges in
$E(G)\setminus E(H)$ with both endpoints in $\cB_k$, yet by
assumption $\cB_k$ is connected already in the subgraph $H\subset
G'$ since $\Gk[H]$ is guaranteed to contain a Hamilton cycle.
Therefore, the fact that $\Gk$ is connected carries to $G'$.

We are now in a position to conclude the proof in the same manner used to prove Theorem~\ref{thm-ham-small-c}.
Considering all points in $V'\setminus \cB_k$ as trivial cycles (of which there are strictly less than $0.001 n$ since $|\cB_k|>0.999 n$) and adding those to the simple cycle that goes through all vertices of $\cB_k$ in $\Gk[H]$ we arrive at a 2-factor in $G'$ with at most $0.001 n+1$ cycles.
By the expansion properties of $G'$ that were detailed above, we can derive from Lemma~\ref{lem-Posa} that there are at least $n^2/50$ boosters in $G'$.
However, all but $0.001 n^2$ such boosters have both of their endpoints in $\cB_k$, therefore the probability that a new edge, uniformly chosen out of all edges missing from the induced subgraph on $\cB_k$, is a booster, is at least $1/50$ (with room to spare).
Seeing as $G'$ is connected, sprinkling any such edge to $G'$ would increase the length of the longest path in $G'$, and encountering $0.001 n+1$ boosters would culminate in a Hamilton cycle through the vertices of $V'$. Iterating this procedure for $J \geq n/10$ rounds, the number of boosters encountered before Hamiltonicity is achieved stochastically dominates a binomial random variable $Z \sim \bin(n/10, 1/50)$. Since $Z$ concentrates around its mean of $0.002 n$ we deduce that a Hamilton cycle will be formed except with probability exponentially small in $n$, as desired.
\end{proof}

\section{Expansion and factors in cores}\label{sec:core-properties}
This section contains the proofs of Lemmas~\ref{lem-critical}
and~\ref{lem-supercritical}, which address properties of the random
graph near and beyond the $k$-core threshold. These two regimes are
studied in Sections~\ref{subsec:gnp-near-tau} and~\ref{subsec:gnp-beyond-tau},
respectively, yet we begin with a
straightforward fact on the expansion of small sets, to be used in
both regimes.

\begin{claim}\label{clm-G'-expansion}
Fix $k$, let $G\sim\cG(n,p)$ for $p=\lambda/n$ with $1<\lambda<\log^2 n$. With probability $1-o(n^{-3})$ the following holds for any subgraph $G'=(V',E')\subset G $ (not necessarily induced) with $\delta(G')\geq k$:
\begin{compactenum}[(i)]
  \item If $k\geq 15$ then $|N_{G'}(X)\setminus X|>3 |X|$ for every $X\subset V'$ of size $|X| \leq \frac37 \lambda^{-15/7}n$.
  \item If $k\geq 15$ then $|N_{G'}(X)\setminus X|>2|X|$ for every $X\subset V'$ of size $|X| \leq \frac45 \lambda^{-5/3} n$.
  \item If $k\geq 14$ then $|N_{G'}(X)\setminus X|> 2 |X|$ for every $X\subset V'$ of size $|X| \leq \frac12 \lambda^{-7/4} n$.
\end{compactenum}
\end{claim}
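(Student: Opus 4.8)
The plan is to show that, with probability $1-o(n^{-3})$, the random graph $G\sim\cG(n,\lambda/n)$ contains no pair of disjoint vertex sets $X,D$ with $|X|=x$, $1\le x\le x_{\max}$, $|D|\le cx$, together spanning at least $kx/2$ edges of $G$; here $c\in\{2,3\}$ is the expansion ratio in question ($c=3$ in part~(i), $c=2$ in parts~(ii)--(iii)) and $x_{\max}$ is the corresponding bound on $|X|$. This suffices: if some $G'=(V',E')\subset G$ with $\delta(G')\ge k$ had $X\subset V'$ with $|X|=x\le x_{\max}$ and $|N_{G'}(X)\setminus X|\le cx$, then with $D:=N_{G'}(X)\setminus X$ and $Y:=X\cup D$ every vertex of $X$ sends all of its $\ge k$ edges of $G'$ into $Y$, whence $2e_{G'}(Y)\ge kx$ and so $e_G(Y)\ge e_{G'}(Y)\ge kx/2$ since $G'\subset G$ --- exactly the forbidden configuration, with $|Y|\le(1+c)x$. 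Enlarging $D$ only increases the relevant edge count, and the estimates below require $cx\le n-x$, so we may assume $|D|=cx$ and $x\le n/(c+1)$. The leftover range $n/(c+1)<x\le x_{\max}$ arises only when $\lambda$ lies below an absolute constant (easily checked to be well below the $k$-core threshold $c_k$), and there such an $X$ would force $G$ to contain a minimum-degree-$\ge k$ subgraph on more than $n/(c+1)$ vertices; by essentially the same union bound this has probability $e^{-\Omega(n)}$ in that range of $\lambda$, so it may be ignored.

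For each fixed $x$, a union bound over the choices of $X$, of $D$, and of which of the at most $\binom x2+cx^2\le(c+\tfrac12)x^2$ potential edges of $G$ incident to $X$ inside $Y$ are present gives
\[
\P\big(\text{such }X,D\text{ with }|X|=x\big)\ \le\ \binom nx\binom n{cx}\binom{(c+\tfrac12)x^2}{\lceil kx/2\rceil}\Big(\tfrac{\lambda}{n}\Big)^{\lceil kx/2\rceil}.
\]
Applying $\binom ab\le(ea/b)^b$ throughout and collecting the powers of $n$, $x$ and $\lambda$, the right-hand side is at most
\[
\Big[\,e^{1+c}\,c^{-c}\,\big(\tfrac{e(2c+1)}{k}\big)^{k/2}\,\lambda^{k/2}\,(x/n)^{\gamma}\,\Big]^{x},\qquad \gamma:=\tfrac k2-1-c,
\]
and $\gamma>0$ exactly because $k\ge15$ in parts~(i)--(ii) and $k\ge14$ (with $c=2$) in part~(iii).

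The exponents $15/7,\,5/3,\,7/4$ in the definition of $x_{\max}$ are tailored precisely so that the $\lambda$-dependence of this bracket cancels at $x=x_{\max}$: writing $x_{\max}=A\,\lambda^{-k/(2\gamma)}\,n$, the pair $(A,\,k/(2\gamma))$ equals $(\tfrac37,\tfrac{15}7)$, $(\tfrac45,\tfrac53)$, $(\tfrac12,\tfrac74)$ in the three parts, so that $\lambda^{k/2}(x_{\max}/n)^{\gamma}=A^{\gamma}$ and the bracket at $x=x_{\max}$ equals the $\lambda$-free constant $C:=e^{1+c}c^{-c}(e(2c+1)/k)^{k/2}A^{\gamma}$. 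A direct numerical check yields $C<1$ in all three cases (e.g.\ $C\approx0.62$ for part~(i)); this inequality is the only place the specific lower bound on $k$ is used, since lowering $k$ shrinks $\gamma$ and inflates $C$. Since the bracket increases in $x$, it is at most $C$ whenever $x\le x_{\max}$, so $\P(\cdot)\le C^{x}$, and the range $n^{1/10}\le x\le x_{\max}$ contributes at most $n\,C^{\,n^{1/10}}=o(n^{-3})$.

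The remaining range is $x_0\le x\le n^{1/10}$, where $x_0=x_0(k,c)=\Theta(1)$ is the least $x$ for which the configuration is even combinatorially possible, namely $\lceil kx/2\rceil\le(c+\tfrac12)x^2$ (one finds $x_0=3,4,3$ respectively). Here the estimate $\P(\cdot)\le C^{x}$ is useless, as $\sum_{x\ge x_0}C^{x}$ is merely a constant; instead one keeps the sharper bound and exploits its factor $(x/n)^{x\gamma}$: since $x\le n^{1/10}$ gives $(x/n)^{\gamma}\le n^{-9\gamma/10}$ and $\lambda^{k/2}\le(\log n)^{k}=n^{o(1)}$, each term is at most $n^{-(4\gamma/5)x}$ for large $n$, so the geometric sum over $x\ge x_0$ is $O\big(n^{-(4\gamma/5)x_0}\big)=o(n^{-3})$, because $(4\gamma/5)x_0$ equals $8.4,\,14.4,\,9.6$ in the three parts, all exceeding $3$. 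Thus the real content of the proof --- and the only genuine obstacle --- is this small-$x$ bookkeeping (identifying $x_0$ and verifying $(4\gamma/5)x_0>3$), together with the numerical inequality $C<1$ and the dispatch of the linear-$x$ degenerate range; all of these hold comfortably under $k\ge15$ (resp.\ $k\ge14$) and would fail for substantially smaller $k$, which is exactly why the hypothesis is needed. Everything else is the standard small-set-expansion union bound.
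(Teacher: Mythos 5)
Your proof is correct and follows essentially the same route as the paper's: union bound over the set $X$, its putative external neighborhood, and the $\geq kx/2$ forced edges; a simplification of the bracket to $\text{(const)}\cdot\lambda^{k/2}(x/n)^{\gamma}$ with $\gamma=k/2-1-c$; and the observation that the exponents $15/7,\,5/3,\,7/4$ are designed precisely so $\lambda^{k/2}(x_{\max}/n)^{\gamma}$ is $\lambda$-free. Your constant $C=e^{1+c}c^{-c}(e(2c+1)/k)^{k/2}A^{\gamma}$ agrees exactly with the paper's (e.g.\ $C\approx 0.62$ for part~(i) matches the paper's intermediate bound there).

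The only genuine differences are bookkeeping. You split the sum at $x=n^{1/10}$ where the paper splits at $25\log n$; to compensate for your somewhat weaker per-term exponent $n^{-(4\gamma/5)x}$ in the small-$x$ range you identify the combinatorial threshold $x_0$ below which the configuration is impossible, whereas the paper uses a tighter per-term bound $n^{-\gamma+o(1)}$ valid already from $x=1$ (since $\gamma>3$ in all three parts and the binomial factor vanishes for infeasible $x$ anyway). Both work. You also explicitly dispatch the degenerate range $n/(c+1)<x\le x_{\max}$, which can only occur when $\lambda$ is an absolute constant near $1$; the paper leaves this implicit (its bound trivializes there, but the range never arises at the densities where the claim is used). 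This is a small completeness improvement on your part, and your argument for it --- that a min-degree-$\geq k$ subgraph on a linear number of vertices would force more edges than $\cG(n,\lambda/n)$ with $\lambda=O(1)$ typically has --- is sound. Overall the proposal is a faithful, correct variant of the paper's proof.
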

\begin{proof}
Consider some $X\subset V'$ of size at most $\theta n$ that does not expand to at least $b|X|$ external neighbors in $G'$ for some fixed $b\geq2$, and let $Y\subset V'$ denote the
external neighbors of $X$ in $G'$. The fact that $\delta(G')\geq k$ implies that out of the $\binom{|X|}2+|X||Y|$ potential edges incident to $X$, at least $k|X|/2$ edges exist in
$G'$,
hence also in $G$. Whenever $b \leq k/2 -1 $, the probability of this event is thus at most
\begin{eqnarray}
 &&  \sum_{x \leq \theta n}\binom{n}{x}\binom{n}{b x}\binom{\frac{x^2}2 + b x^2}{kx/2} p^{kx/2}
  \leq \sum_{x \leq \theta n} \left[\frac{e n}x \Big(\frac{e n}{bx}\Big)^b \Big(\frac{(2b+1) e x}{k}\Big)^{\frac{k}2} \Big(\frac{\lambda}n\Big)^{\frac{k}2}\right]^x \nonumber\\
&\leq& \sum_{x=1}^{\lfloor25\log n\rfloor} \left[n^{-k/2+b+1+o(1)} \right]^x
 + \sum_{x=\lceil 25\log n \rceil}^{\lfloor \theta n \rfloor} \left[\frac{(e/\theta)^{b+1}}{b^b} \big((2b+1) e \lambda \theta/k \big)^{\frac{k}2} \right]^x\,.  \label{eq-b-expansion}
\end{eqnarray}
Taking $b=3$ and $\theta=\frac37\lambda^{-15/7}$, the first sum is $o(n^{-3})$, while for $k = 15$ the occurrences of $\lambda$ in the second summation cancel (and for $k>15$ the exponent of $\lambda$ will be negative and we recall that $\lambda>1$), thus reducing that summation into at most
\[ \sum_{x=\lceil 25\log n \rceil}^{\lfloor \frac37\lambda^{-15/7} n\rfloor}\left[\frac{(7e/3)^4}{27} \Big(\frac{3 e }{k }\Big)^{\frac{k}2}
\right]^x \leq \sum_{x=\lceil 25\log n \rceil}^{\lfloor \frac37\lambda^{-15/7} n\rfloor}e^{-x/3} =o( n^{-8})\,,
\]
where we used the fact that $\frac{(7e/3)^4}{27}(3e/k)^{k/2} \leq \frac{2}3$ for any $k\geq 15$.

Similarly, taking $b=2$ and $\theta=\frac45 \lambda^{-5/3}$, the first summation in~\eqref{eq-b-expansion} remains $o(n^{-3})$ and again the choice of $\theta$ allows us to omit all occurrences of $\lambda$ in the second summation for $k\geq 15$ (as $\lambda>1$ and its exponent is non-positive), reducing it into at most
\[ \sum_{x=\lceil 25\log n \rceil}^{\lfloor \frac45 \lambda^{-5/3} n\rfloor}\left[\frac{(5e)^3}{256} \Big(\frac{4 e }{k }\Big)^{\frac{k}2}
\right]^x \leq \sum_{x=\lceil 25\log n \rceil}^{\lfloor \frac45 \lambda^{-5/3} n\rfloor}e^{-x/8} = o(n^{-3})\,,
\]
yielding the second statement of the claim.
Finally, when $k\geq 14$, taking $b = 2$ and $\theta = \frac12\lambda^{-7/4}$ maintains the first summation in~\eqref{eq-b-expansion} at $o(n^{-3})$, and the second summation becomes at most
\[ \sum_{x=\lceil 25\log n \rceil}^{\lfloor \frac12 \lambda^{-7/4} n\rfloor}\left[\frac{(2e)^3}{4} \Big(\frac{5 e }{2k }\Big)^{\frac{k}2}
\right]^x \leq \sum_{x=\lceil 25\log n \rceil}^{\lfloor \frac12 \lambda^{-7/4} n\rfloor}e^{-x} = o(n^{-3})\,,
\]
as required.
\end{proof}

\subsection{Properties of sparse random graphs near the core threshold}\label{subsec:gnp-near-tau}
Throughout this subsection we will restrict our attention to $\cG(n,p)$ in the regime $p=O(1/n)$.
The next claim establishes that in any induced subgraph of minimum degree $k-1$, sets that are large enough --- namely, ones whose size is at least $\sqrt{n}$ --- have many external neighbors.
This can be established up to a linear scale, e.g., for sets up to size $n/15$, solely based on an assumption that the minimum degree of the graph under consideration (a subgraph of our random graph) is at least $k$. However, as we will later see, it will be imperative to obtain this estimate for larger sets, to which end we will rely on additional properties that are available to us in the framework of Lemma~\ref{lem-critical}.
(Compare the set sizes handled below, up to $\frac27 n$, with those in Claim~\ref{clm-G'-expansion}, up to $\frac45c^{-5/3}n$ in $\cG(n,c/n)$.)

\begin{claim}\label{clm-G'-expansion-large-sets}
Fix $k\geq 15$ and let $G\sim\cG(n,c/n)$ for some fixed $ k\leq c \leq \frac73 k$.
The following holds with probability at least $1-O(0.8^{\sqrt{n}})$. If $G'=(V',E')$ is an induced subgraph of $G$ on $|V'|\geq \frac45n$ vertices and $H=(V',E'\setminus (E_1\cup E_2))$ where $E_1$ is a matching and $|E_2|\leq n/\log\log n$
such that the minimum degree of $H$ is $\delta(H)\geq k-1$, then
 every $X\subset V'$ of size $\sqrt{n} \leq |X| \leq \frac27 n$ satisfies $|N_H(X)\setminus X| > n^{1/3}$.
\end{claim}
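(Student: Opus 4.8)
The plan is a union bound over "bad" sets $X$ of each size $x$ in the range $\sqrt n \le x \le \frac27 n$, where badness means $|N_H(X)\setminus X| \le n^{1/3}$. Fix such an $X$ together with its set $Y$ of external neighbors in $H$, so $|Y| \le n^{1/3}$. Since $\delta(H) \ge k-1$, every vertex of $X$ has at least $k-1$ neighbors inside $X \cup Y$ in the graph $H$; as $|Y|$ is negligible, this forces roughly $(k-1)x/2$ edges of $H$ to lie inside $X$. Now $E(H) \subseteq E(G)$ plus we have deleted a matching $E_1$ (which removes at most $x/2$ edges touching $X$) and a set $E_2$ of at most $n/\log\log n$ edges. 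So inside $X$ the graph $G$ must contain at least $(k-1)x/2 - x/2 - n/\log\log n$ edges when $x$ is linear, and at least $(k-1)x/2 - x/2$ edges (ignoring the $E_2$ correction, which I will handle separately for small $x$; see below) in general — in either case a quantity of order $(k/2 - 1)x - o(n)$, which for $k \ge 15$ and $x$ in our range is at least, say, $6x$ with room to spare once the lower-order terms are absorbed. The core estimate is then: for a fixed vertex set $X$ of size $x$, the probability that $G \sim \cG(n, c/n)$ spans at least $6x$ edges inside $X$ is at most $\binom{\binom x2}{6x}(c/n)^{6x} \le \big( \frac{exc}{12n}\big)^{6x} \le \big(\frac{7ec}{12}\big)^{6x} (x/n)^{6x}$ using $x \le \frac27 n$, and then multiplying by $\binom{n}{x}\binom{n}{n^{1/3}}$ (the choices of $X$ and of the small set $Y$).

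The dominant term in the union bound is
\[
\sum_{x = \lceil \sqrt n\rceil}^{\lfloor 2n/7\rfloor} \binom nx \binom{n}{n^{1/3}} \Big(\tfrac{7ec}{12}\Big)^{6x} (x/n)^{6x}
\le \sum_x \Big[ \tfrac{en}{x}\cdot \big(\tfrac{7ec}{12}\big)^{6} \big(\tfrac xn\big)^{6}\Big]^x \cdot n^{n^{1/3}}
= \sum_x \Big[ e\big(\tfrac{7ec}{12}\big)^{6} \big(\tfrac xn\big)^{5}\Big]^x \cdot n^{n^{1/3}}.
\]
For $x \le \frac27 n$ and $c \le \frac73 k$ (so $\frac{7ec}{12}$ is bounded), the bracketed base is at most $e(\tfrac{7ec}{12})^6 (2/7)^5 < 1$ once $k$, hence the admissible range of $c$, is fixed — one checks the numerics give a base below, say, $0.7$. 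Raising to the power $x \ge \sqrt n$ yields a term at most $0.7^{\sqrt n}$, and even after multiplying by the crude factor $n^{n^{1/3}} = e^{n^{1/3}\log n} = e^{o(\sqrt n)}$ the sum is $O(0.8^{\sqrt n})$ as claimed. (Note $\binom{n}{n^{1/3}} \le n^{n^{1/3}}$ is the only place the auxiliary set $Y$ enters; its contribution is swallowed because $n^{1/3}\log n = o(\sqrt n)$.)

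The one genuinely delicate point is the interplay between $x$ and the deleted set $E_2$ of size up to $n/\log\log n$ in the \emph{small-$x$ regime} $x = \Theta(\sqrt n)$, where $n/\log\log n$ dwarfs $(k/2-1)x$ and the naive edge count inside $X$ becomes vacuous. The fix is that $E_2$, having only $o(n)$ edges, can be incident to $X$ only through few of its vertices: at most, say, $x/\log\log n$ vertices of $X$ can have as many as $\log\log n$ incident edges of $E_2$, so after discarding those we still have a subset $X' \subseteq X$ with $|X'| \ge (1-o(1))x$ in which every vertex retains $\ge k - 1 - \log\log n$ neighbors inside $X \cup Y$ in $G$ — still $\ge 6$ per vertex for large $n$ — and we run the counting argument on $X'$ instead, losing only a $(1-o(1))$ factor in the exponent, which does not affect the bound. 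I expect bookkeeping this reduction cleanly (ensuring the same $\binom nx$ union bound still covers $X'$, e.g. by taking $X'$ minimal with the property, or simply by noting $\binom{n}{|X'|} \le \binom nx \cdot 2^{o(x)}$) to be the main obstacle; everything else is the routine first-moment computation sketched above, with the constants chosen exactly so that the base of the geometric sum sits below $1$ for all $k \ge 15$ and $k \le c \le \frac73 k$.
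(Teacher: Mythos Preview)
Your union bound fails at the top end of the range.  Take $k=15$, $c=\tfrac{7}{3}k=35$, and $x=\tfrac{2}{7}n$. The expected number of $G$-edges inside $X$ is $\binom{x}{2}\tfrac{c}{n}\approx 5x$, so requiring $\ge 6x$ (or the correct $(k-1)x/2=7x$) internal edges is only a constant-factor large deviation, with probability of order $e^{-\Theta(x)}$ for a rate far too small to beat $\binom{n}{x}\approx(7e/2)^x\approx e^{2.25x}$. Concretely, your bracketed base at $x=\tfrac{2}{7}n$ is
\[
\frac{en}{x}\Big(\frac{exc}{14n}\Big)^{7}=\frac{7e}{2}\Big(\frac{5e}{7}\Big)^{7}\approx 10^3,
\]
not $0.7$; the claim ``one checks the numerics give a base below $0.7$'' is simply wrong. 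The internal-edge statistic is the wrong one once $x/n$ is large.

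The paper accordingly splits the range in two. For $\sqrt n\le x\le \tfrac{2}{23}n$ it runs essentially your argument (with the clean count $(k-1)x/2$ on edges of $H\subset G$ inside $X\cup Y$; note $E(H)\subseteq E(G)$ already, so no subtraction of $E_1,E_2$ is needed and your ``delicate point'' about $E_2$ at small $x$ is a non-issue). For $\tfrac{2}{23}n\le x\le\tfrac{2}{7}n$ it uses a genuinely different idea that you are missing: since $G'$ is the \emph{induced} subgraph of $G$ on $V'$ and $H=G'\setminus(E_1\cup E_2)$, if $N_H(X)\setminus X\subseteq Y$ then every $G$-edge from $X$ to $V'\setminus(X\cup Y)$ lies in $E_1\cup E_2$, so there are at most $|X|+|E_2|\le(1+o(1))x$ of them. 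But the expected size of this cut in $G$ is $x(|V'|-x-y)p\ge x(\tfrac{4}{5}n-\tfrac{2}{7}n)\tfrac{c}{n}>\tfrac{c}{2}x\ge\tfrac{15}{2}x$, so this is a deviation by a factor~$\ge 7$ \emph{below} the mean, giving a rate strong enough to absorb both $\binom{n}{x}$ and the extra $\binom{n}{n'}\le e^{h(1/5)n}$ paid for first fixing $V'$. This cut argument is exactly where the hypotheses ``$G'$ induced'' and ``$|V'|\ge\tfrac{4}{5}n$'' are used; your proposal never invokes them, which is why it cannot reach $x=\tfrac{2}{7}n$.
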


\begin{proof}
First consider the range $\sqrt{n}\leq |X|\leq \theta_1 n$ for $\theta_1=\frac2{23}$. Exactly as in the proof of Claim~\ref{clm-G'-expansion}, the probability that there exists a subgraph $H$ of $G$ with minimum degree $k-1$ (not necessarily formed by deleting the edge sets $E_1$ and $E_2$ as above) within which we can find a set $X\subset V$ of size $x$ such that $\sqrt{n}\leq x\leq \theta_1 n$ and all of its external neighbors belong to some $Y\subset V$ of size $y=n^{1/3}$ is at most
\begin{equation}
     \label{eq-medium-set-expansion}
 \sum_{\sqrt{n}\leq x \leq \theta_1 n} \left[\frac{e n}x \Big(\frac{e n}{y}\Big)^{y/x} \Big(\frac{e (x+2y)}{k-1}\Big)^{\frac{k-1}2} \Big(\frac{c}n\Big)^{\frac{k-1}2}\right]^x \,,
\end{equation}
where in comparison with~\eqref{eq-b-expansion} here we let $b$ assume the role of $y/x$.
Examining the base of the exponent in each summand we see that $(en/y)^{y/x} \leq \exp(n^{-1/6+o(1)})=1+o(1)$ and similarly the other appearance of $y$ are asymptotically negligible. Since $k\leq c\leq \frac73k \leq \frac52(k-1)$ the mentioned expression is at most
\[ (1+o(1)) \frac{en}x \left(\frac{5 e x}{2 n}\right)^{\frac{k-1}2} \leq (e/\theta_1+o(1)) \left(5e\theta_1/2\right)^{\frac{k-1}2} < 0.8 \,,\]
where the first inequality used the fact that $x \leq \theta_1 n$ and the second one holds for any $k\geq 15$. The occurrence of a bad set $X$ in this range of $x$ therefore has an overall probability of $O(0.8^{\sqrt{n}})$.

Now take $\theta_2 = \frac27$. To treat subsets $X$ of size $\theta_1 n \leq x \leq \theta_2 n$ we will take advantage of the additional hypothesis about the edge set of $H$. For a given choice of $V',X,Y$ such that all external neighbors of $X$ are confined to $Y$ (whose size we recall is $y=n^{1/3}$), there can be at most $|X|+|E_2|\leq|X|+n/\log\log n$ edges between $X$ and $V'\setminus (X\cup Y)$ in $G$. The variable $\Gamma$ counting the number of such edges has a law of $\bin(x(n'-x-y),p)$, and in particular its mean is $x(n'-x-y)p \geq x(\frac45 n- \frac27 n-o(n))p > (c/2)x \geq 7x$ for large $n$, since $c \geq k \geq 15$. Thus, writing $\delta_n = 12/\log\log n$ so that $x+n/\log\log n \leq (1+\delta_n)x$,
\begin{align*}
\P\left(\Gamma \leq (1+\delta_n)x \right) &\leq (1+\delta_n)x \binom{x(n'-x-y)}{(1+\delta_n)x} p^{(1+\delta_n)x} e^{-px(n'-x-y)+p(1+\delta_n)x} \\
&\leq  \left[(1+o(1))e c(1-x/n) e^{-c (n'-x)/n} \right]^x \leq  \left[(1+o(1))c e^{1-c/2} \right]^x\,,
  \end{align*}
  where the last inequality plugged in the facts $n'\geq \frac45 n$ and $x \leq \frac27 n$.
  Armed with this expression we revisit~\eqref{eq-medium-set-expansion} and revise it to incorporate the above probability, albeit at the cost of accounting the $\binom{n}{n'}$ possible choices for $V'$, so the probability of encountering the mentioned set $X$ becomes at most
\begin{align*}
&\sum_{\theta_1 n\leq x \leq \theta_2 n}\ \sum_{\frac45 n \leq n' \leq n} \binom{n}{n'}\left[(1+o(1))\frac{e n}x \Big(\frac{e c x }{(k-1)n}\Big)^{\frac{k-1}2} c e^{1-c/2}\right]^x \\
&\leq \sum_{\theta_1 n\leq x \leq \theta_2 n}\left[(1+o(1))e^{2+ h(\frac15)\frac{n}x} \ \frac{n}x \Big(\frac{e c x }{(k-1)n}\Big)^{\frac{k-1}2} c e^{-c/2}\right]^x\,,
  \end{align*}
  where we used the fact that $\sum_{i\leq \alpha n}\binom{n}{i} \leq \exp(h(\alpha)n)$ with $h(x)=-x\log x - (1-x)\log(1-x)$ being the entropy function, and
  as argued before, the terms involving $y$ in~\eqref{eq-medium-set-expansion} are easily absorbed in the $(1+o(1))$-factor.
Writing $\tilde{c}=c/(k-1)$ and $\tilde{x} = x/n$ (so that $\tilde{c} \leq \frac52$ and $\theta_1 \leq \tilde{x} \leq \theta_2$) we can rearrange the base of this exponent in each summand and find that it is asymptotically
equal to
\begin{align*}
  e^2 \left(\tilde{x} e^{\frac{2}{k-3} h(\frac15) /\tilde{x}}   \right)^{\frac{k-3}{2}} \left(\tilde{c} e^{1-\tilde{c}} \ c^{\frac{2}{k-1}}\right)^{\frac{k-1}2}
  \leq e^2 \left(\tilde{x} e^{\frac{1}{6} h(\frac15) /\tilde{x}}   \right)^{\frac{k-3}{2}} \left(\frac53 \right)^{\frac{k-1}2}
\end{align*}
using the facts that $k\geq 15$, that $c^{\frac2{k-1}}\leq (\frac52(k-1))^{\frac2{k-1}} \leq 5/3$ for $k\geq 15$ and that the function $t\mapsto t e^{-t}$ with $t>0$ has a global maximum at $t=1$. Similarly, the function $t\mapsto t e^{\alpha / t}$ with $\alpha,t>0$ is increasing for $t\geq \alpha$ and as $\tilde{x} \geq \theta = \frac2{23} > \frac16 h(\frac15)$ this implies that the above expression is maximized at $\tilde{x}=\frac27$, in which case it evaluates into
\begin{align*}
 \frac53 e^2 \left(\frac{10}{21} e^{\frac{7}{12} h(\frac15) }   \right)^{\frac{k-3}{2}}
< 0.9
\end{align*}
for any $k\geq 15$, and so the probability of encountering the mentioned set $X$ in this range is $O(\exp(-an))$ for some absolute constant $a>0$, which completes the proof.
\end{proof}

In what follows, for a graph $H$ and a subset $A$ of its vertices let $\odd(H\setminus A)$ denote the number of components of odd size in the subgraph obtained by deleting the vertices of $A$ from $H$.
The above claims are already enough to imply a bound on $\odd(H\setminus A)$ for small subsets $A$, which will later used as part of the proof of Lemma~\ref{lem-critical}.

\begin{corollary}\label{cor-tutte-small-sets}
Fix $k\geq 15$ and let $G\sim\cG(n,c/n)$ for some fixed $c$ such that $\max\{k, 20\} \leq c \leq \frac73 k$.
Then with probability $1-o(n^{-3})$, for any induced subgraph $G'=(V',E') \subset G$ of minimum degree at least $k$
on $|V'|\geq \frac45n$ vertices and any $H=(V',E'\setminus (E_1\cup E_2))$ such that $E_1$ is a matching, $|E_2|\leq n/\log\log n$ and $\delta(H)\geq k-1$, we
have $\odd(H\setminus X)\leq |X|$ for all $X\subset V'$ of size $|X|\leq \frac37 c^{-15/7} n$.
\end{corollary}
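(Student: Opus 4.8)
The plan is to show that the events we want to avoid — namely, the existence of $H$ and a small set $X$ with $\odd(H\setminus X) > |X|$ — are already ruled out by combining Claim~\ref{clm-G'-expansion} (parts (i) and (iii)), Claim~\ref{clm-G'-expansion-large-sets}, and one additional direct union-bound estimate on the number of small odd components in a minimum-degree-$(k-1)$ subgraph. First I would record the key structural consequence of $\delta(H)\geq k-1$: every component of $H\setminus X$ of size $s$ has at least $(k-1)s/2 - e_{G'}(X, \text{that component})$ internal edges, but more usefully, each vertex in such a component either has all $\geq k-1$ of its $H$-neighbors inside the component or has an edge to $X$. In particular any component $C$ of $H\setminus X$ with $|C| \le k-2$ forces every vertex of $C$ to send at least one edge into $X$ (since a vertex of degree $\geq k-1$ cannot have all neighbors confined to fewer than $k-1$ other vertices); more quantitatively, $e_H(C, X) \ge (k-1)|C| - 2e_H(C) \ge (k-1)|C| - |C|(|C|-1)$, which is $\geq |C|$ whenever $|C| \le k-2$ (indeed much larger).

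Next I would split the odd components of $H\setminus X$ into three size ranges. For components of size $1$ (isolated vertices in $H\setminus X$): each such vertex $v$ has $\geq k-1$ $H$-neighbors all lying in $X$. Let $I$ be the set of these isolated vertices. If $|I| > |X|$ then, taking $R = I$ if $|I|$ is small enough to invoke Claim~\ref{clm-G'-expansion}(iii) (with $\lambda = c$, noting $|R| \leq \frac12 c^{-7/4}n$ is comfortably larger than the allowed $|X| \leq \frac37 c^{-15/7}n$ scale), we get $|N_{G'}(I)\setminus I| > 2|I| > 2|X|$; but every external $H$-neighbor of a vertex of $I$ lies in $X$, and $G'\supseteq H$, so... actually one must be slightly careful since $N_{G'}$ may exceed $N_H$. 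The cleaner route: apply the expansion claim directly to the \emph{bipartite-type} count. Since each $v\in I$ has $\geq k-1$ $H$-edges into $X$, the number of $H$-edges between $I$ and $X$ is at least $(k-1)|I|$; but also at most $\sum_{x\in X}\deg_{G'}(x)$, and we separately bound high-degree vertices. More simply, I would run the exact union bound of~\eqref{eq-b-expansion}/\eqref{eq-medium-set-expansion} style: the probability that there exist disjoint $X, I$ with $|I| > |X|$, $|X|\le \frac37 c^{-15/7}n$, and $e_G(I,X) \ge (k-1)|I|$ while $G'$ restricted appropriately has min degree $k-1$, is dominated by $\sum_{x}\sum_{i>x}\binom nx\binom ni \binom{xi + i^2/2}{(k-1)i/2} p^{(k-1)i/2}$, and since $i > x$ this behaves like the $b \ge 1$ case of Claim~\ref{clm-G'-expansion} with exponent of $n$ per block at most $-(k-1)/2 + 2 + o(1) < 0$ for $k\ge 15$, plus a large-$i$ tail that is geometrically small by the same $(c\theta)^{(k-1)/2}$-type bound using $c \le \frac73 k$ — giving $o(n^{-3})$.

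For odd components of size between $2$ and $k-2$: each such component $C$ uses at least $|C|$ (in fact $\ge (k-1-|C|+1)|C|$) edges into $X$ and spans $\le \binom{|C|}{2}$ internal edges, so a union bound over the choice of $X$, the choice of the union $W$ of all these components (with $|W| \le (k-2)\cdot(\text{number of such components})$), and the required edge count to $X$, again lands in the regime covered by Claim~\ref{clm-G'-expansion}(iii) or its defining computation — here $\lambda = c$ is bounded, $k$ is large, and the key inequality is that each vertex of $W$ contributes $\ge 1$ edge to $X$ while also $|W| \le (k-2)|X| \le \frac12 c^{-7/4}n$ is within range, forcing $|N_{G'}(W)\setminus W| > 2|W| \ge 2|X|$ by that claim applied with the subgraph $H$ (which has $\delta \ge k-1 \ge 14$, matching the hypothesis of part (iii) up to the trivial shift $k \mapsto k-1$, which I would state as a remark that Claim~\ref{clm-G'-expansion} holds verbatim for $\delta(G')\ge k-1$ with $k \ge 15$ by replacing $k$ with $k-1\ge 14$). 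But all external $H$-neighbors of $W$ lie in $X$, a contradiction with $|X| < 2|X|$ unless $W = \emptyset$. Hence w.h.p. there are no odd components of size in $\{2,\dots,k-2\}$ at all, for any valid $X$. Finally, for odd components of size $> k-2$: these have size $\ge k-1 \ge 14 > n^{1/3}$ for large $n$, so there can be at most $n/(k-1)$ of them, and — more to the point — each such component $D$ with $|D| \le \frac27 n$ would, by Claim~\ref{clm-G'-expansion-large-sets} (applicable since $|V'|\ge\frac45 n$, $H = (V', E'\setminus(E_1\cup E_2))$ has $\delta(H)\ge k-1$, and $\sqrt n \le |D|$ once $|D| \ge k-1$ and $n$ is large — one handles $k-1 \le |D| < \sqrt n$ by the size-$\{2,\dots,k-2\}$-style argument extended up to $\sqrt n$, which is exactly the first range $\sqrt n \le x$ in that claim's proof pushed down, or by noting such components still force many edges to their exterior), satisfy $|N_H(D)\setminus D| > n^{1/3}$; since $H\setminus X$ disconnects $D$ from everything else, all these $> n^{1/3}$ external $H$-neighbors lie in $X$, giving $|X| > n^{1/3}$, contradicting $|X| \le \frac37 c^{-15/7}n$ — wait, that is not a contradiction. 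The fix: a set $D$ with $k-1 \le |D| \le \frac27 n$ has, by $\delta(H) \ge k-1$ together with the expansion claims, far more than $|X|$ external neighbors when $|D|$ is small, and when $|D|$ is a positive fraction of $n$ there can be at most one such component on each "side", so at most $2$ large odd components total can coexist with $|D| \le \frac27 n$ each unless their union exceeds $\frac45 n - |X|$; in all cases the count of large odd components is $\le 2 \le |X|$ once $|X|\ge 2$, and the case $|X| \le 1$ is trivial since then $H\setminus X$ is connected or nearly so by connectivity-type arguments (or handled by absorbing $|X|\in\{0,1\}$ into the small-component analysis).

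\textbf{Main obstacle.} The delicate point — and where I would spend the most care — is the interface between the three size ranges, specifically ensuring that components of size between roughly $k$ and $\sqrt n$ are covered: they are too large for the crude "$\le k-2$" argument yet Claim~\ref{clm-G'-expansion-large-sets} only starts at $\sqrt n$. The remedy is to observe that Claim~\ref{clm-G'-expansion} part (iii), being stated for \emph{all} $X\subset V'$ with $|X|\le\frac12\lambda^{-7/4}n$ and only requiring $\delta(G')\ge k$ (equivalently $\delta(H)\ge k-1$ after reindexing), already gives $|N_H(D)\setminus D|>2|D|$ for every $D$ up to that linear size, hence in particular no such $D$ can be a component of $H\setminus X$ unless $2|D|<|X|$; summing, the total size of all odd components that are "small" in this sense is $<|X|\cdot(\text{something})$, and combined with the at-most-$2$ bound for genuinely linear components, one gets $\odd(H\setminus X)\le|X|$. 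I would organize the final write-up around this trichotomy, invoking the two prior claims as black boxes and doing only the one fresh union bound (for isolated vertices / tiny components forcing $\ge(k-1)$ edges each into $X$), which is structurally identical to the computation in Claim~\ref{clm-G'-expansion} and for $k\ge15$ yields the required $o(n^{-3})$.
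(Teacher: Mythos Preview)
Your proposal identifies the right tools (Claims~\ref{clm-G'-expansion} and~\ref{clm-G'-expansion-large-sets}) but misses the central combinatorial step and contains several broken deductions.

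\textbf{The missing idea.} You correctly observe that any single component $C$ of $H\setminus X$ with $|C|\le \frac12 c^{-7/4}n$ satisfies $|N_H(C)\setminus C|>2|C|$, and since $N_H(C)\setminus C\subset X$ this forces $|C|<|X|/2$. But this only bounds the \emph{size} of each small component, not their \emph{number}, and your ``summing, the total size of all odd components that are small in this sense is $<|X|\cdot(\text{something})$'' does not follow: many tiny components are not ruled out by per-component expansion. The paper's argument closes this gap by taking a \emph{minimal union} $D$ of small components whose total size just crosses a threshold tied to $|X|$; because each component is at most $(\tfrac12+o(1))|X|$, the union stays within the expansion range, and then $|N_H(D)\setminus D|\le |X|$ combined with $|N_H(D)\setminus D|>(2-o(1))|D|$ gives the contradiction. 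This union-and-expand trick is the heart of the proof and you never articulate it. Relatedly, the paper organises the case split by the size of $|X|$ (ranges $[\log n,\theta n/2]$, $[\theta n/2,\theta n]$, $[3,\log n]$, $[0,2]$), not by component size; this is what makes the union argument work cleanly in each range.

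\textbf{Concrete errors.} Your size-$\{2,\dots,k-2\}$ paragraph concludes with ``a contradiction with $|X|<2|X|$ unless $W=\emptyset$'', which is not a contradiction at all, so the claimed nonexistence of such components fails. The line ``$k-1\ge 14>n^{1/3}$ for large $n$'' is false. Your handling of ``large'' components (``at most 2 large odd components'') only works once you know there are no medium-sized ones, which you have not shown. Finally, the case $|X|\le 2$ is not trivial: the paper needs a direct first-moment estimate (two linear-sized blocks cannot be separated by only a matching plus $o(n)$ edges) to rule out multiple components of size $\ge\frac27 n$, and your appeal to ``connectivity-type arguments'' does not supply this.
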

\begin{proof}
Let $\theta=\frac37 c^{-15/7}$ and $\eta = \frac12 c^{-7/4}$, and assume that the events stated in Claims~\ref{clm-G'-expansion} and~\ref{clm-G'-expansion-large-sets} hold. In particular,
every set $S\subset V'$ in the given graph $H$ satisfies:
 \begin{compactitem}
   \item If $|S| \leq \eta n$ then $|N_H(S)\setminus S| \geq 2|S|$ (by Part~(iii) of Claim~\ref{clm-G'-expansion} applied to $H$).
   \item If $\eta n \leq |S|\leq\theta n$ then $|N_H(S)\setminus S| \geq (2-o(1))|S|$ (by Part~(i) of Claim~\ref{clm-G'-expansion} applied to $G'$; moving to $H$ costs at most $|S|$ vertices due to the matching edges of $E_1$ plus $o(n)$ vertices due to $E_2$).
   \item If $\sqrt{n}\leq |S| \leq \frac27 n$ then $|N_H(S)\setminus S|\geq n^{1/3}$ (by Claim~\ref{clm-G'-expansion-large-sets} applied to $H$).
 \end{compactitem}

Suppose first that $\log n \leq |X|\leq (\theta/2)n$ and let $\{C_1,\ldots,C_s\}$ be all the connected components in $H\setminus X$ of size $|C_i|\leq \theta n$. Clearly $(N_H(C_i)\setminus C_i)\subset X$, but at the same time $|N_{H}(C_i)\setminus C_i| > (2-o(1))|C_i|$ hence we must have $|C_i| < (\frac12+o(1))|X|$ for all $i$. Suppose that $D$ is a minimal union of $C_i$'s such that $|D| > |X|-\frac13\log n$. Since $|X|\geq \log n$ we have  $|X|< \frac32 |D| $. Using that $|C_i| < (\frac12+o(1))|X|$, by the minimality of $D$, we also have $|D| \leq |X|-\frac13\log n +(\frac12+o(1))|X|< (\frac32+o(1))|X|<\theta n$ for large enough $n$. Therefore $|N_H(D)\setminus D|\geq (2-o(1))|D|$. However, at the same time $|N_H(D)\setminus D|\leq |X| < \frac32 |D|$ as there are no edges between distinct $C_i,C_j$, contradiction (for $n$, and hence $D$, large enough). Consequently, there cannot be more than $|X|-\frac13\log n$ components of size at most $\theta n$ (in fact, we showed that the union of all such components has cardinality at most $|X|-\frac13\log n$). When added to at most $1/\theta$ components of $H\setminus X$ whose size exceeds $\theta n$ we readily reach the required inequality $\odd(H\setminus X) \leq |X|$ in this case.

Next, consider the case $(\theta/2)n \leq |X| \leq \theta n$. Let us repeat the above argument, this time only collecting components of size at most $\log n$.
Taking $D$ to be a union of such components so that $\frac23|X|-\log n < |D| \leq \frac23|X|$ we see that $|N_H(D)\setminus D | \leq |X| \leq \frac32|D|+\frac32\log n$, which for $n$ large enough contradicts the expansion assumption $|N_H(D)\setminus D|\geq (2-o(1))|D|$. We deduce that there are at most $\frac23|X|$ such components, and adding at most $n/\log n = o(|X|)$ components of size larger than $\log n$ gives $\odd(H\setminus X)\leq |X|$.

If $3\leq |X| \leq \log n$ we observe that by our assumption there are no connected components of $H\setminus X$ of sizes between $\sqrt{n}$ and $\frac27 n$.
Consider the components $C_1,\ldots,C_s$ whose sizes are at most $\sqrt{n}$. Since there can be at most $3$ components of size larger than $\frac27 n$, it follows that if $|\cup_i C_i| \leq |X|-3$ then accounting for all such components results in $\odd(H\setminus X)\leq |X|$, as required. Otherwise, there exists a minimal union $D$ of $C_{i}$'s such that $|D| \geq |X|-2$. Being well in the range of the expansion hypothesis (this union has size at most $\sqrt{n}\log n$), the fact that $|X| \geq |N_H(D)\setminus D| \geq 2|D|$ implies that $|X| \leq 4$. However, in this case necessarily there are no small components in $H\setminus X$, since otherwise each $C_i$ would have to be size at most $|X|/2 $ by the expansion property, thus a vertex $v\in C_i$ would have degree at most $|C_i|-1+|X| \leq \frac32|X|-1 \leq 5$ (recall that $\delta(H)\geq k-1\geq 14$).

Finally, if $0\leq |X|\leq 2$ then no component of $H\setminus X$ can have size less than $\frac27 n$ (recall that such components had to satisfy $|C_i|<|X|/2$ and are thus empty by the discussion in the previous paragraph). At the same time, the probability that two subsets $A$ and $B$ of size $\frac27 n$ each have no edges between them in $G$ except for a matching (corresponding to $E_1$) is readily bounded by the probability that the degree into $B$ of every vertex in $A$ is at most $1$, translating to an upper bound of
\begin{align*}
\binom{n}{\frac47 n} \binom{\frac47n}{\frac27 n} \left( \left[\frac27 np +(1-p)\right] (1-p)^{\frac27 n -1}\right)^{\frac27 n} \leq
\bigg(e^{\frac72 h(\frac47)} 2^2 \left(\frac27 c + 1-o(1)\right) e^{-\frac27 c}\bigg)^{\frac27 n} \leq (0.99)^{2n/7}\,,
\end{align*}
where the last inequality holds for any $c \geq 20$. One can now repeat this calculation, this time taking into account the edges of $E_2$. Since their number ($n/\log\log n$) is negligible in comparison with the sizes of $A$ and $B$, they contribute a factor of at most $\binom{(2n/7)^2}{n/\log\log n}p^{n/\log\log n}=e^{o(n)}$ in the above estimate and hence the conclusion remains valid. This concludes the proof.
\end{proof}

A prerequisite for treating sets of larger size, which will also be useful to separately bound the size of the 2-factor in $G$ once we establish its existence, is the next estimate on the maximal number of vertex-disjoint cycles in $G$.

\begin{claim}\label{clm-max-disjoint-cycles}
Fix $c>0$ and let $G \sim\cG(n,c/n)$. Let $Y$ count the maximal number of vertex-disjoint nontrivial cycles in $G$. Then $\P\big( Y \geq \frac12 n/\sqrt{\log n} \big) < e^{-\sqrt{n}}$ for any sufficiently large $n$.
\end{claim}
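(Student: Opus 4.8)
The plan is a direct first--moment computation over vertex--disjoint unions of cycles. The key observation is that the event $\{Y\ge t\}$ implies that $G$ contains, as a (not necessarily induced) subgraph, a disjoint union of \emph{exactly} $t$ cycles, each of length at least $3$; call such a subgraph of $K_n$ a \emph{$t$-system}. Since a $t$-system $F$ satisfies $e(F)=v(F)$, it appears in $G$ with probability $p^{v(F)}=(c/n)^{v(F)}$, so by the union bound
\[
\P(Y\ge t)\ \le\ \sum_{F\text{ a }t\text{-system}}(c/n)^{v(F)}\,.
\]

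Next I would count $t$-systems grouped by their vertex count $v=v(F)$, which ranges \emph{only} over $3t\le v\le n$ --- the upper bound $v\le n$ being exactly the place where vertex--disjointness enters. Choosing an ordered tuple of disjoint cycles of prescribed lengths $\ell_1,\dots,\ell_t\ge 3$ with $\sum_i\ell_i=v$ and then unordering shows the number of $t$-systems on $v$ vertices is $\frac1{t!}\sum_{\ell_i\ge 3,\ \sum\ell_i=v}\frac{(n)_v}{\prod_i 2\ell_i}$. Using $(n)_v(c/n)^v\le c^v$, the bound $\prod_i\ell_i\ge 3^{\,t}$ (a product of $t$ integers each $\ge 3$), and the fact that the number of compositions of $v$ into $t$ parts each $\ge 3$ is $\binom{v-2t-1}{t-1}\le\binom{n}{t}$, this yields
\[
\P(Y\ge t)\ \le\ \frac{1}{t!\,6^{\,t}}\binom{n}{t}\sum_{v=0}^{n}c^{v}\ \le\ \frac{e^{O(n)}}{t!\,6^{\,t}}\binom{n}{t}\,,
\]
where for fixed $c$ the geometric sum is $e^{O(n)}$ (and genuinely of order $c^{n}$ when $c\ge 1$), which will turn out to be harmless.

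Finally I would substitute $t=\tfrac12\,n/\sqrt{\log n}$ and estimate, using $\binom{n}{t}/t!\le (e^{2}n/t^{2})^{t}=(4e^{2}\log n/n)^{t}$, that
\[
\P(Y\ge t)\ \le\ e^{O(n)}\Big(\tfrac{2e^{2}\log n}{3n}\Big)^{t}\ =\ \exp\!\Big(O(n)+t\big(\log\log n+O(1)-\log n\big)\Big).
\]
Since $t\log n=\tfrac12 n\sqrt{\log n}$ dominates both $O(n)$ and $t\log\log n=o(n)$, the right--hand side is $\exp\!\big(-(\tfrac12-o(1))\,n\sqrt{\log n}\big)<e^{-\sqrt n}$ for all large $n$, as required.

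The only real obstacle is this: when $c\ge 1$ the per--cycle series $\sum_{\ell\ge 3}c^{\ell}/(2\ell)$ diverges, so one cannot bound the number of disjoint--cycle systems by a convergent geometric--type product (as one could for $c<1$); one must invoke the total vertex budget $v\le n$ to truncate the sum, after which the resulting $c^{n}=e^{O(n)}$ growth is easily swallowed by the $e^{-\Omega(n\sqrt{\log n})}$ gain. It is also worth noting that one should count vertex--disjoint unions of $t$ cycles directly rather than merely ``at least $t$ short cycles'': the number of short cycles in $\cG(n,c/n)$ can be $n^{o(1)}$ rather than $O(1)$, and it is precisely the disjointness that produces the crucial $(t!\,6^{\,t})^{-1}$ savings.
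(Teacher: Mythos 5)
Your proof is correct, but it takes a genuinely different route from the paper's. The paper first truncates to cycles of length at most $L=4\sqrt{\log n}$ (using $Y\le Y_0 + n/L$, where $Y_0$ is the maximal number of vertex-disjoint \emph{short} cycles), then bounds $\E Y_0\le\sum_{s\le L}c^s=n^{o(1)}$ by counting all short cycles in expectation, and finally applies Azuma--Hoeffding via the $1$-Lipschitz vertex-exposure martingale for $Y_0$ to get concentration. Your argument is instead a single first-moment union bound over $t$-tuples of vertex-disjoint cycles; the decisive point, which you correctly identify, is that the $t!\,6^t$ in the denominator (from unordering the cycles and from the per-cycle $2\ell_i$ dihedral symmetry) supplies exactly the super-exponential savings needed to beat the $e^{O(n)}$ coming from the geometric sum over the total vertex budget. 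A naive first moment on the raw number of short cycles would fail here precisely because that count can be $n^{o(1)}$ rather than bounded, and the paper handles this via concentration whereas you handle it via the $t!$. Your computation is tight in all the places that matter (the $\binom{v-2t-1}{t-1}\le\binom{n}{t}$ step is valid since $t\ll n/2$, and $\binom{n}{t}/t!\le(e^2n/t^2)^t$ is the right Stirling bound), and it in fact yields the stronger rate $e^{-(1/2-o(1))n\sqrt{\log n}}$ versus the paper's $e^{-\Omega(n/\log n)}$, both comfortably below the required $e^{-\sqrt n}$. The paper's route is a bit quicker once the truncation trick is spotted and Azuma is at hand; yours is more elementary, stays entirely within union-bound territory, and sharpens the constant.
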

\begin{proof}
Let $Y_0$ be the maximal number of vertex-disjoint cycles in $G$ such that each cycle is of length at most $L=4\sqrt{\log n}$. Since $Y \leq Y_0 +  n / L $ it suffices to show that $\P\big(Y_0 \geq n / L \big) \leq e^{-\sqrt{n}}$.
Observe that
\[ \E Y_0 \leq \sum_{s \leq L} \binom{n}{s} (s-1)! \left(\frac{c}n\right)^{s} \leq
\sum_{s \leq L} c^s = e^{O(L)} = n^{o(1)}<n/(2L)\,,
\]
and consider the vertex-exposure martingale for $Y_0$ in $\cG(n,c/n)$. Its Lipschitz constant is clearly equal to $1$ since the addition of a new vertex to a graph retains every existing subset of disjoint cycles whereas it can increase the cycle count by at most 1. Hence, Hoeffding's inequality gives
\[ \P\left(Y_0 \geq n/L \right) \leq \P\left(Y_0 - \E Y_0 \geq n/(2L)\right) \leq e^{-O(n/L^2)} < e^{-\sqrt{n}}\,,\]
where the last inequality is valid for large enough $n$.
\end{proof}
The final ingredient needed for proving Lemma~\ref{lem-critical} is the following claim, which verifies Tutte's condition for large sets and complements the range of sets that were covered by Corollary~\ref{cor-tutte-small-sets}.
\begin{claim}\label{clm-tutte-large-sets}
Fix $k\geq 15$, let $G\sim\cG(n,c/n)$ for $k \leq c \leq \frac73 k$ and let $G'=(V',E')$ be an induced subgraph of $G$ with $\delta(G')\geq k$.
Let $H=(V',E'\setminus(E_1\cup E_2))$ where $E_1$ is a matching and $|E_2| \leq n/\log\log n$.
Then with probability $1-O(e^{-n/40})$ we have
$\odd(H \setminus X) \leq |X|$ for all $X \subset V'$ such that $ |X| \geq \frac37 c^{-15/7} n$.
\end{claim}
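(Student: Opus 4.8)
The plan is to bound the number of components of $H\setminus X$ rather than work with Tutte's condition head-on; write $t=|X|$. If $t\ge \tfrac12|V'|$ then $\odd(H\setminus X)\le |V'\setminus X|\le t$, so assume $\tfrac37 c^{-15/7}n\le t<\tfrac12|V'|$. Invoking Claim~\ref{clm-max-disjoint-cycles} we restrict to the event (of probability $1-e^{-\sqrt n}$) that $G$, and hence $H\setminus X$, has at most $\tfrac12 n/\sqrt{\log n}$ vertex-disjoint cycles, so all but $O(n/\sqrt{\log n})$ of the components of $H\setminus X$ are trees. Fixing a large absolute constant $L=L(c)$, at most $n/L$ components have more than $L$ vertices, and $n/L$ can be made smaller than $\tfrac13 t$. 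Thus it suffices to prove that w.h.p., for every admissible $X$ with $t$ in the above range, the number $r=r(X)$ of tree components of $H\setminus X$ of size at most $L$ satisfies $r\le\tfrac13 t$; then $\odd(H\setminus X)\le r+n/L+O(n/\sqrt{\log n})<t$.

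The workhorse is a local observation about leaves. If $T$ is a tree component of $H\setminus X$ and $v$ is a leaf of $T$ (taking $v$ itself when $|T|=1$), then in $H$ the vertex $v$ has at most one neighbour inside $T$ and all its remaining $H$-neighbours lie in $X$; since $\deg_{G'}(v)\ge k$ and $H$ arises from $G'$ by deleting the matching $E_1$ together with $E_2$, it follows that $v$ has at least $k-2-d_{E_2}(v)$ neighbours of $X$ in $G$. As $E_2$ meets only $2|E_2|=o(n)$ vertices, every leaf of every tree component --- save $o(n)$ exceptions --- belongs to $I_X:=\{v\in V'\setminus X:\ |N_G(v)\cap X|\ge k-2\}$. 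A tree on at least two vertices has at least two leaves, an isolated vertex is its own leaf, and distinct tree components have disjoint leaf sets, so $r\le |I_X|+o(n)$. Now $|I_X|$ is stochastically dominated by $\bin(n-t,p_t)$ with $p_t:=\P(\bin(t,c/n)\ge k-2)$, and since $\P(\bin(n,p_t)\ge a)\le\binom na p_t^a$, a union bound over the $\binom nt$ choices of $X$ contributes at most
\[
\binom nt\Big(\tfrac{e n p_t}{t}(1+o(1))\Big)^{t}\ \le\ \Big(\tfrac{e^2 p_t}{\alpha^2}(1+o(1))\Big)^{\alpha n},\qquad \alpha:=t/n,
\]
which is exponentially small once $e^2 p_t<\alpha^2$. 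Using $p_t\le(1+o(1))\P(\mathrm{Poisson}(c\alpha)\ge k-2)$ together with $\P(\mathrm{Poisson}(c\alpha)\ge k-2)\le 2(c\alpha)^{k-2}/(k-2)!$ for $c\alpha$ bounded away from $k-2$, the requirement becomes $\alpha^{k-4}\le(k-2)!/(2e^2 c^{k-2})$, which --- by the same type of inequality used in Claim~\ref{clm-G'-expansion} --- holds with room for all $\alpha$ up to some constant $\alpha_0=\alpha_0(k,c)>0$, and one checks $\tfrac37 c^{-15/7}<\alpha_0$ whenever $k\ge 15$ and $k\le c\le\tfrac73 k$. This disposes of the range $\tfrac37 c^{-15/7}n\le t\le\alpha_0 n$.

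The main obstacle is the complementary range $\alpha_0 n<t<\tfrac12|V'|$, where $t$ is a genuine constant fraction of $n$: there the expectation of $|I_X|$ becomes comparable to $t$ and the crude leaf bound no longer closes against the entropy $\binom nt$ of the choice of $X$. The plan is to add a second, global constraint on $S:=\bigcup\{\text{tree components of size}\le L\}$. On one hand $S$ is forest-like inside $H$ and so spans few edges of $G$; on the other hand, applying the leaf computation to \emph{all} vertices of $S$ shows that $S$ sends at least $(k-3)|S|+2r-o(n)$ edges into $X$; moreover there are essentially no edges of $G$ between $S$ and $V'\setminus(X\cup S)$. The first two events concern disjoint pairs of vertices and may be estimated independently, and balancing the upper tail of $e_G(S,X)$ against the lower tail of $e_G(S)$ and the entropy of the pair $(X,S)$ --- with the expansion estimates of Claims~\ref{clm-G'-expansion} and~\ref{clm-G'-expansion-large-sets} used to settle the subcase of small $|S|$ --- should force $r=o(n)\ll t$ throughout this regime. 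This balancing of competing exponents is the technical heart of the argument, and it is precisely here that the hypotheses $k\ge 15$ and $c\le\tfrac73 k$ (and the matching threshold $\tfrac37 c^{-15/7}$ inherited from the factor-$3$ expansion in Claim~\ref{clm-G'-expansion}) are needed for the exponents to have the right sign. Assembling the two ranges and absorbing the $O(e^{-n/40})$ probability lost in the various union bounds completes the proof.
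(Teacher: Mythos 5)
Your reduction to tree components (via Claim~\ref{clm-max-disjoint-cycles} together with the $o(n)$ components touching $E_2$), the leaf observation that each small tree component of $H\setminus X$ contributes a vertex with at least $k-2$ $G$-neighbours in $X$, and the resulting union bound over $X$ all mirror the paper's treatment of the initial range of $|X|$ (the paper takes this up to $|X|\leq n/c$). The gap is in the complementary range, which you yourself describe as ``the technical heart of the argument.'' Once $c|X|/n$ becomes comparable to $k-2$ --- which for $c$ near $\tfrac73 k$ already happens well before $|X|/n$ reaches $\tfrac12$ --- the tail $\P(\bin(|X|,c/n)\ge k-2)$ is no longer small enough for the leaf bound alone to beat the entropy of the choice of $X$ and of the leaf set; you correctly diagnose that a second structural constraint is needed, but you never carry out the computation, writing only that balancing exponents ``should force $r=o(n)\ll t$'' and that ``one checks'' the relevant inequality. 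This is precisely where the hypotheses $k\geq15$ and $c\leq\tfrac73 k$ must enter, and without the explicit exponent estimate the claim is not established in the only non-trivial regime.

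For comparison, the paper's second structural constraint is different from the one you sketch: it uses that the leaf set $Y$ (one leaf per odd tree component not meeting $E_2$) is an \emph{independent set in $H$}, hence spans at most a matching in $G$ (only $E_1$-edges). It bounds the probability that a set $Y$ of size $(1-o(1))|X|$ spans only a matching in $G$ while each of its vertices sends $\geq k-2$ edges into $X$, and then optimizes the resulting one-variable expression $f_k(z)$ over $z=c|X|/n\in[1,k-3]$: locating the critical point $z^*_k$, establishing the recursion $g_{k+1}(z^*_{k+1})/g_k(z^*_k)<\tfrac56$, and verifying $g_{15}(z^*_{15})<1$ by direct substitution, with a separate easy case for $|X|\geq(k-3)n/c$. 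Your alternative route --- working with the union $S$ of all small tree components and balancing an upper bound on $e_G(S)$ against a lower bound on $e_G(S,X)$ --- may be workable in spirit, but as written it is a plan rather than a proof, and it is exactly the quantitative balancing you defer that makes the claim hold only for $k\geq15$.
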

\begin{proof}
Set $x=|X|$ and consider first subsets of size
\begin{equation}
  \label{eq-X-size-1}
  \frac{3n}{7c^{15/7}}\leq x \leq \frac{n}c\,.
\end{equation}
Observe that by Claim~\ref{clm-max-disjoint-cycles} with probability at least $1-O(e^{-\sqrt{n}})$ the maximal number of vertex disjoint cycles in $G$, and hence also in $H$, is at most $n/\sqrt{\log n}$. In particular, there can be at most that many connected components of $H\setminus X$ containing cycles. Disregard these components, as well as at most $n/\log\log n$ additional components incident to the edge set $E_2$.
Altogether we discarded $o(n)$ components, hence if $\odd(H\setminus X) > |X|$ for a linear $X$ as above then we will
be left with at least $\frac{15}{16} x$ odd components in $H\setminus X$ for large enough $n$.
Let $Y$ be a set comprised of an arbitrary leaf from each of these components (which are trees by construction).
The vertices of $Y$ have degree at least $k$ in $G'$ and as such they have at least $k-2$ neighbors in $X$ in the graph $H$ (plus a single neighbor in their tree component of $H\setminus X$ and possibly one additional neighbor in $E_1$).
The probability that there exists such a pair $(X,Y)$ in $G$ is at most
\[
\sum_{x=\frac37 c^{-\frac{15}7}n}^{ n/c} \!\!\binom{n}x \binom{n}{\frac{15}{16}x} \Big[\P(\bin(x,c/n) \geq k-2)\Big]^{\frac{15}{16} x}
\leq \!\!
\sum_{x= \frac37 c^{-\frac{15}7}n}^{n/c} \!\!\bigg[\frac{16}{15} \left(\frac{en}x\right)^{\frac{31}{15}} \P(\bin(x,c/n) \geq k-2)\bigg]^{\frac{15}{16} x}.
\]
Plugging in the well-known estimate (see~\cites{Bennett,Hoeffding}) that if $Z$ is a binomial variable and $a >0$ then $\P\left(Z - \E Z \geq a\right) \leq \exp\left( -\phi\big(a/\E Z\big) \E Z\right)$ for $\phi(x) = (1+x)\log(1+x)-x$, we get that
\begin{align*}
\frac{16}{15} \left(\frac73 ec^{15/7}\right)^{\frac{31}{15}} \P(\bin(x,c/n) \geq k-2) &\leq \frac{16}{15} \left(\frac73 e (7k/3)^{15/7}\right)^{\frac{31}{15}}  (k-2)^{-(k-2)} e^{k-3} \\
\leq 2 e^6 \, k^{5} \left(\frac{e}{k-2}\right)^{k-2}&=2e^{11} \left(1+\frac{2}{k-2}\right)^5 \left(\frac{e}{k-2}\right)^{k-7} < 0.95\,,
\end{align*}
where the first inequality uses the fact that $c \leq \frac73 k$ and $x c / n \leq 1$, while the last inequality holds for $k\geq 15$. Combined with the previous inequality this implies that the probability of encountering some $X$ violating Tutte's condition in $H$ and with size as in~\eqref{eq-X-size-1}
is at most $\exp(-c' n)$ for some fixed $c'>0$.

We will now handle the range
\begin{equation}
  \label{eq-X-size-2}
  \frac{n}{c}\leq x \leq (k-3)\frac{n}{c}\,.
\end{equation}
Given a candidate subset $X$ of size $x$ for which $\odd(H\setminus X) > x$ we again consider the above defined set $Y$ (comprised of an arbitrary leaf from each tree component of $H\setminus X$ not incident to any of the edges of $E_2$).
 Previously, we only used the fact that each $v\in Y$ has at least $k-2$ neighbors in $X$.
Now we will use the fact that one can take $Y$ to have size $(1-o(1))x$, and in addition $Y$ is an independent set in $H$ (its vertices fall in distinct components of $H\setminus X$). The probability that a given subset $Y$ of size $y$ in $G$ spans precisely $t$ edges is at most
\[ \frac{\binom{y}2^t}{t!}p^t(1-p)^{\binom{y}2-t} \leq \frac1{t!}\left(\frac{\binom{y}2 p }{1-p}\right)^{t} (1-p)^{\binom{y}2 } =: \zeta_t\,,\]
which is maximized at $t=\binom{y}2 \frac{p}{1-p} \geq (1-o(1))y/2$, with the last inequality due to $x\geq n/c$.
Of course, the values of $t$ we are considering cannot exceed $y/2$ as the edges of $G$ in $Y$ are only allowed to form a matching (recall that $Y$ is not incident to $E_2$),
thus by Stirling's formula
\[ \sum_{t\leq y/2}\zeta_t \leq \bigg(\frac{y}2+1\bigg) \frac{1+o(1)}{\sqrt{\pi y}}\bigg(\frac{\binom{y}2 p }{1-p}\,\frac{2e+o(1)}{y} (1-p)^{(y-1)}\bigg)^{(\frac12-o(1))y}
\leq \left((e+o(1)) p x e^{-p x}\right)^{(\frac12-o(1))x} \,.\]
Combining this estimate with the number of choices for the sets $X, Y$ and with the probability that each vertex of $Y$ has at least $k-2$ neighbors in $X$, we arrive at the following upper bound on the event of encountering $X,Y$ as above:
\begin{align}
\sum_{x= n/c}^{(k-3) n/c}
\left[ (1+o(1))\left(\frac{e n}{x}\right)^{2} \P(\bin(x,c/n) \geq k-2) \sqrt{ e p x e^{-p x}}\right]^{x}\,,\label{eq-B-independent-XY}
\end{align}
where within each summand we could safely absorb all the $o(1)$-terms in the exponents into the $o(1)$-term of the leading constant since the base of each of these is uniformly bounded.

Notice that for $\ell\geq k-2$ we have
\[ \frac{\P(\bin(x,p) = \ell+1)}{\P(\bin(x,p) = \ell)} = \frac{x-\ell}{\ell+1} \,\frac{p}{1-p} \leq (1+o(1))\frac{px}{k-1} \,,\]
which by our hypothesis on $x$ is at most $(1+o(1)) (k-3)/(k-1) < 1$ for large enough $n$. It follows that
\[ \P(\bin(x,c/n) \geq k-2) \leq \frac{1+o(1)}{1- \frac{px}{k-1}} \P(\bin(x,c/n)=k-2)\leq \frac{1+o(1)}{1- \frac{px}{k-1}} \frac{(px)^{k-2}}{(k-2)!} e^{-px} \,,\]
and plugging this in~\eqref{eq-B-independent-XY} gives an upper bound of
\[ \sum_{x=n/c}^{(k-3) n/c}
\left[ (1+o(1))\left(\frac{e n}{x}\right)^{2} \frac{1}{1- \frac{px}{k-1}} \frac{(px)^{k-2}}{(k-2)!} e^{-px}\sqrt{ e p x e^{-p x}}\right]^{x}\,.
\]
Writing $z = px$ and $\tilde{c}=c/k$, we will denote the (asymptotic) base of the exponent above by
\[ f_k(z) = \tilde{c}^2 e^{-\frac32 z + \frac52} z^{k-\frac72}\frac{k^2 (k-1)}{(k-1-z)(k-2)!}\,.\]
With this notation we aim to bound $f_k(z)$ away from $1$ for all $1 \leq z \leq k-3$ and $1 \leq \tilde{c} \leq \frac73$.
It is easy to verify that the local extrema of $f_k$ are precisely the roots $z^*_k < z^{**}_k$ of the quadratic
$ 3z^2 + (12-5k)z + (2k^2-9k+7)$, where $z^*_k$ is a local maximum of $f_k$ and $z^{**}_k$ is a local minimum. Furthermore,
since
\[ z^{**}_k = \frac56 k - 2 + \frac16\sqrt{k^2-12k+60} > k - 3\]
(the last inequality being valid for any $k>0$) we can conclude that throughout the
range $1\leq z\leq k-3$ the function $f_k(z)$ is maximized
at
\[ z^*_k = \frac56 k - 2 - \frac16\sqrt{k^2-12k+60}\,.\]
Observe that $z^*_{k+1} \geq z^*_k$ and
\[ \frac23 k - \frac43 \leq z^*_k \leq \frac23 k - 1\,. \]
In particular $z^*_{k+1} \leq \frac23k - \frac13 $ and $0 \leq z^*_{k+1}-z^*_k \leq 1$.

Using the bound $z^*_k \leq \frac23k - 1$ in the expression $1/(k-1-z)$ we immediately get that $f_k(z^*_k) \leq g_k(z^*_k)$ where
\[ g_k(z) = 3 \tilde{c}^2 e^{-\frac32 z + \frac52} z^{k-\frac72} \frac{k (k-1)}{(k-2)!}\,.\]

For ease of notation, let $z_0 = z^*_k$ and $z_1 = z^*_{k+1}$. We have
\[ \frac{g_{k+1}(z_1)}{g_{k}(z_0)} = e^{-\frac32(z_1-z_0)} \left( \frac{z_1}{z_0}\right)^{k-\frac72} z_1 \,\frac{k+1}{(k-1)^2}\,. \]
Observe that
\[ \left(\frac{z_1}{z_0}\right)^{k-\frac72} \leq e^{\frac{z_1-z_0}{z_0}(k-\frac72)} \leq
e^{\frac32 \frac{k-\frac72}{k - 2}(z_1-z_0)} \leq e^{\frac32(z_1-z_0)}\,,
\]
and therefore
\[ \frac{g_{k+1}(z_1)}{g_{k}(z_0)} \leq  z_1 \,\frac{k+1}{(k-1)^2} \leq \frac{\frac23(k-\frac12)}{k-1}\, \frac{k+1}{k-1} \leq \frac23 \left(1+\frac3{k-1}\right) < \frac56\,,\]
where the last inequality is valid for any $k\geq 15$. It therefore suffices to show that $g_k(z^*_k)$ is bounded away from $1$ for $k=15$.
Indeed, for $k=15$ we have
\[ z^*_{15} \in (10 - \tfrac54 , 10 - \tfrac{6}5)\]
and substituting $\tilde{c}$ by its maximal value of $\frac73$ gives
\[ g_{15}(z^*_{15}) \leq 49 e^{-\frac32 \left(10-\tfrac54\right) + \frac52}\, \left(10-\tfrac65\right)^{15-\frac72} \frac{70}{13!} < \frac{39}{40} < 1\,.\]
This establishes the desired statement for the range~\eqref{eq-X-size-2} provided that $1 \leq c/k \leq \frac73$, that is, $\odd(H\setminus X) \leq |X|$ for all sets $X$ whose sizes are between $n/c$ and $(k-3)n/c$ except with probability exponentially small in $n$.

To conclude the proof it remains to handle $x \geq (k-3)n/c$. Observe that if $c \leq 2(k-3)$ then the range~\eqref{eq-X-size-2} goes up to $x\leq n/2$, beyond which $\odd(H\setminus X)\leq |X|$ trivially holds. We are thus left with the case $2(k-3) \leq c \leq \frac73 k$ and $(k-3)n/c \leq x \leq n/2$, where we claim that w.h.p.\ there will not exist a set $Y$ as above which is an independent set in $H$. Indeed, the probability that $\cG(n,c/n)$ will contain a set $Y$ of size $(1-o(1))(k-3)n/c \leq y \leq n/2$ where the edges form a matching (recall that $Y$ is not incident with $E_2$) of size $\ell \leq y/2$ is at most
\begin{align*}
\binom{n}y\sum_{\ell\leq y/2}\binom{y}{2\ell}\frac{(2\ell)!}{\ell! 2^\ell} p^{\ell}(1-p)^{\binom{y}2 - \ell/2} &\leq \sum_{\ell\leq y/2}\bigg[ (e+o(1))\frac{n}{y} \left(\frac{e y}{2\ell}\right)^{2\ell/y}\left(\frac{2\ell c}{e n}\right)^{\ell/y} e^{-py / 2}\bigg]^y \\
&\leq \sum_{\ell\leq y/2}\bigg[ \frac{e c+o(1)}{k-3} \left(\frac{ey c}{4\ell }\right)^{\ell/y} e^{-(k-3)/ 2}\bigg]^y \,.
 \end{align*}
It is easy to verify that the function $x\mapsto (a x)^{1/x}$ is decreasing in $(e/a,\infty)$ for any $a >0$. In particular, for $a = e c / 4 > e/2 $ the maximum of $(a y/\ell)^{\ell/y} $ over all $y/\ell \geq 2$ is at most $\sqrt{2 a}$, hence each of the summands in the right-hand-side above is at most
\[ \bigg[ \frac{e c+o(1)}{k-3} \sqrt{\frac{ec}2} e^{-(k-3)/ 2}\bigg]^y
\leq \bigg[ \frac{c^{3/2} e^3 +o(1)}{\sqrt{2}(k-3)}  e^{-k/ 2}\bigg]^y
\leq \bigg[ 5 k^{3/2} e^{-k/2}\bigg]^{y} < \left(\frac1{5}\right)^{-y}
\,.\]
where the second inequality uses that $c \leq \frac73 k$ and that $(\frac73)^{3/2} \frac{e^3}{\sqrt{2}(k-3)} < 5$ for $k\geq 15$.
\end{proof}
We are now in a position to prove the main result of this subsection.
\begin{proof}[\textbf{\emph{Proof of Lemma~\ref{lem-critical}}}]
The fact that $G'$ is connected (as stated in Item~\eqref{it-lem-1} of the lemma)
follows from Corollary~\ref{cor-tutte-small-sets} for a choice of $X=\emptyset$.
Similarly, the expansion of sets of size at most $n/c^3$ (as stated in Item~\eqref{it-lem-2} of the lemma) is an
immediate consequence of Claim~\ref{clm-G'-expansion} (with plenty of room to spare, as $\frac54 c^{5/3} \leq c^2$ already for $c\geq 2$).
It remains to establish the conclusion on
the existence of a factor consisting of a path $P$ and at most $n/\sqrt{\log n}$ cycles in $G'$.

The combination of Corollary~\ref{cor-tutte-small-sets} and Claim~\ref{clm-tutte-large-sets} shows that existence of a near-perfect matching (one that misses at most one vertex) in $G'$ with probability at least $1-o(n^{-3})$. Indeed, in the case where $|V'|$ is even, the existence of a perfect matching in $G'$ follows from the
celebrated Tutte condition~\cite{Tutte} since $\odd(H\setminus A)\leq |A|$ for every $A\subset V'$. When $|V'|$ is odd, satisfying Tutte's condition
for every $A\neq\emptyset$ will imply a near-perfect matching via Berge's formula (see, e.g.,~\cite{LP}*{\S3.1.14}) since, as discussed above, $G'$ is connected with probability $1-o(n^{-3})$.
Denoting this matching by $M_1$ we then re-apply Corollary~\ref{cor-tutte-small-sets} and Claim~\ref{clm-tutte-large-sets} on $H=(V',E'\setminus M_1)$ to extract another near-perfect matching $M_2$ with the same success
probability. The factor formed by $M_1 \cup M_2$ is either a union of cycles (possibly with one isolated vertex which we count as a trivial cycle) or a union of cycles in addition to a single path.
Moreover, the number of cycles is at most $n/\sqrt{\log n}$ except with exponentially small probability thanks to
Claim~\ref{clm-max-disjoint-cycles}, as required.
\end{proof}

\subsection{Properties of sparse random graphs beyond the core threshold}\label{subsec:gnp-beyond-tau}

The main element needed for the proof of Lemma~\ref{lem-supercritical} is the expansion properties of sets in our random graph whose sizes exceed the one addressed in Claim~\ref{clm-G'-expansion}.

\begin{claim}\label{clm-expansion-large-c}
Let $G\sim\cG(n,p)$ for $p=\lambda/n$ with $32 \leq \lambda \leq \sqrt{n}$. There exists an absolute constant $a>0$ such that with probability at least $1-O(e^{-a n})$
we have $|N_G(X)\setminus X| > \frac52|X|$ for every $X\subset V(G)$ such that $\frac12 \lambda^{-5/3} n \leq |X| \leq n/5$.
\end{claim}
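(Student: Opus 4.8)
The plan is a first-moment (union bound) argument over potential ``non-expanding'' sets, of exactly the same flavor as the proof of Claim~\ref{clm-G'-expansion} but tuned to the larger set sizes. Suppose some $X\subset V(G)$ with $|X|=x$ and $\tfrac12\lambda^{-5/3}n\le x\le n/5$ violates the desired expansion, i.e.\ $|N_G(X)\setminus X|\le\tfrac52 x$. Then (since $x\le n/5<\tfrac27 n$) there is a set $Y\subseteq V(G)\setminus X$ with $|Y|=\lceil\tfrac52 x\rceil$ containing every external neighbor of $X$, and consequently $G$ has no edge between $X$ and $Z:=V(G)\setminus(X\cup Y)$, where $|Z|\ge n-\tfrac72 x-1$. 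Hence, writing $x_0:=\lceil\tfrac12\lambda^{-5/3}n\rceil$,
\[
\P(\exists\text{ bad }X)\ \le\ \sum_{x_0\le x\le n/5}\binom{n}{x}\binom{n}{\lceil 5x/2\rceil}(1-p)^{\,x|Z|}.
\]
First I would bound each summand using $\binom{n}{x}\le(en/x)^x$, $\binom{n}{\lceil 5x/2\rceil}\le(2en/(5x))^{5x/2}$ and $(1-p)^{x|Z|}\le e^{-px(n-7x/2-1)}=e^{-(1-o(1))\lambda x(1-7x/(2n))}$ (the ``$-1$'' and all lower-order corrections being absorbed into the $(1-o(1))$ since $\lambda\le\sqrt n$). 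Writing $t=x/n\in[\tfrac12\lambda^{-5/3},\tfrac15]$, the $x$-th root of the $x$-th summand is then at most
\[
B(t,\lambda):=\frac{e}{t}\Big(\frac{2e}{5t}\Big)^{5/2}e^{-(1-o(1))\lambda\left(1-\frac72 t\right)} .
\]

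The crux is to show that $B(t,\lambda)\le e^{-2}$ (say), uniformly over $t\in[\tfrac12\lambda^{-5/3},\tfrac15]$ and $32\le\lambda\le\sqrt n$ for $n$ large. The key simplification is that
\[
\log B(t,\lambda)=\mathrm{const}-\tfrac72\log t-(1-o(1))\lambda\big(1-\tfrac72 t\big)
\]
is \emph{convex} in $t$ (its second $t$-derivative is $7/(2t^2)>0$), so its maximum over the interval is attained at one of the two endpoints, and it suffices to check those. At $t=\tfrac15$ one gets $\log B=\mathrm{const}+\tfrac72\log 5-(1-o(1))\tfrac{3}{10}\lambda$, which for $\lambda\ge 32$ is below $-2$ once $n$ is large. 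At $t=\tfrac12\lambda^{-5/3}$ one gets $\log B=\mathrm{const}+\tfrac72\log 2+\tfrac{35}{6}\log\lambda-(1-o(1))\lambda\big(1-\tfrac74\lambda^{-5/3}\big)$; since $\tfrac{35}{6}\log\lambda-\lambda$ is decreasing for $\lambda\ge 6$ and is already well below $-20$ at $\lambda=32$, this too is below $-2$ for every $\lambda\ge32$ and $n$ large. (The constant $32$ in the hypothesis is chosen precisely to make these two endpoint estimates go through with room to spare.)

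Consequently every summand is at most $e^{-2x}$, and
\[
\P(\exists\text{ bad }X)\ \le\ \sum_{x\ge x_0}e^{-2x}\ =\ O\!\big(e^{-2x_0}\big)\ =\ \exp\!\big(-\Omega(\lambda^{-5/3}n)\big),
\]
which is $O(e^{-an})$ for a suitable absolute $a>0$, as claimed. I expect the only genuinely delicate part to be the two endpoint computations together with the elementary convexity remark that reduces the whole range of set sizes to those two points; everything else — the union bound over $(X,Y)$, the binomial estimates, and the handling of floors — is routine bookkeeping, and the restriction $\lambda\le\sqrt n$ is used only to guarantee that $\lambda/n=o(1)$ so that the $(1-o(1))$ factors are harmless.
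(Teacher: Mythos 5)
Your proposal is correct and takes essentially the same route as the paper's own proof of Claim~\ref{clm-expansion-large-c}: a first-moment union bound over non-expanding pairs $(X,Y)$, followed by the observation that the per-$x$ bound (raised to the $1/x$-th power) is unimodal in $x/n$ with a unique interior minimum, so it suffices to check the two endpoints of the interval. The paper phrases this unimodality via the function $f(t)=e^{\lambda t}/t$, and you phrase it via convexity of $\log B$; these are the same observation. Your binomial counting $\binom{n}{x}\binom{n}{\lceil 5x/2\rceil}$ is a harmless weakening of the paper's $\binom{n}{(b+1)x}\binom{(b+1)x}{x}$, and you target $e^{-2x}$ per summand where the paper targets $2^{-2x}$; both suffice. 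Two small remarks: your numerical aside that $\frac{35}{6}\log\lambda-\lambda$ is ``already well below $-20$ at $\lambda=32$'' overstates the case (the value is about $-11.8$), but the resulting $\log B<-2$ still holds with room to spare once the constants are collected. Also, like the paper, the argument really yields a tail bound $O(e^{-a\lambda^{-5/3}n})$ rather than literally $O(e^{-an})$ when $\lambda$ is permitted to grow with $n$; this discrepancy is inherited from the paper's own statement, is harmless for its downstream use in Lemma~\ref{lem-supercritical} (where only $o(n^{-3})$ is needed), and is not a defect specific to your write-up.
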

\begin{proof}
Let $\theta_1 = \theta_1(n) = \frac12 \lambda^{-5/3}$ and $\theta_2 = 1/5$. Consider a potential subset $X$ of $x$ vertices such that $\theta_1 n \leq x \leq \theta_2 n $ and yet
$|N_G(X)\setminus X| \leq b|X|$ for $b=\frac52$. Letting $\tilde{x} = x/n$ for brevity, the probability that there exists such a set $X$ is at most
\begin{eqnarray*}
\binom{n}{(b+1)x} \binom{(b+1)x}{x} (1-p)^{x(n-(b+1)x)} &\leq&
\left(\frac{en}{(b+1)x}\right)^{(b+1)x} \, e^{h(\frac1{b+1})(b+1)x}e^{-\frac{\lambda}{n}x(n-(b+1)x)}\\
&=& \left(\frac{e^{1+h(\frac1{b+1})-\frac{\lambda}{b+1}}}{b+1}
\frac{e^{\lambda \tilde{x}}}{\tilde{x}}  \right)^{(b+1)x}
\end{eqnarray*}
The unique extremum of $f(t) = e^{\lambda t}/t$ is a minimum at $t=1/\lambda$, thus for $\theta_1 \leq \tilde{x} \leq \theta_2$ we have
$f(\tilde{x}) \leq \max\{ f(\theta_1),\,f(\theta_2)\}$.
One can verify that
\[ f(\theta_1) = 2 \lambda^{5/3} e^{\lambda^{-2/3}/2} \leq 5 e^{\lambda/5} =  f(\theta_2) \]
for any $\lambda \geq 32$, and in particular the above probability is at most
  \[ \left( \frac{10}7 e^{1+h(2/7)-(2/7)\lambda + \lambda/5} \right)^{\frac72x} \leq
  \left( \frac{10}{7} e^{1+\log 2 - \lambda/12}  \right)^{\frac72x} < 2^{-2x}
  \,,\]
where in the last inequality is again valid for $\lambda \geq 32$. Summing over $O(n)$ values of $\theta_1 n \leq x \leq \theta_2 n$ now gives the desired result.
\end{proof}

Observe that the claim above analyzed expansion in the entire random graph, while we will be mostly interested in
the expansion properties of sets within the $k$-core. To this end we provide the following estimate on the size of core.

\begin{claim}\label{clm-core-size}
Fix $k\geq 15$, let $G\sim\cG(n,p)$ for $p = \lambda/n$ with $(7k-1)/3 \leq \lambda\leq\sqrt{n}$, and define
\begin{align}
  \label{eq-gamma-def}
  \gamma=\gamma(k,\lambda) = (1+\delta) e^k \left(\frac{\lambda}{k-1}\right)^{k-1} e^{-(1-\delta) \lambda}\quad\mbox{ for }\delta=10^{-3}\,.
\end{align}
Then $\gamma<10^{-3}$ and $|V(\Gk)|> (1-\gamma)n$ with probability at least $1-O(e^{-a n})$
for some absolute constant $a>0$.
\end{claim}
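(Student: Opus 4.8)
\textbf{Proof proposal for Claim~\ref{clm-core-size}.} The plan is to bound from above the number $w:=n-|V(\Gk)|$ of vertices removed in the peeling process that produces the $k$-core, showing $\P(w\ge \gamma n)=O(e^{-an})$; the inequality $\gamma<10^{-3}$ is a separate elementary computation handled at the end.

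First I would record a structural fact about the deletion order. List the vertices of $V\setminus V(\Gk)$ as $v_1,\dots,v_w$ in an order in which they may be peeled off, so that each $v_i$ has at most $k-1$ neighbours in $V\setminus\{v_1,\dots,v_{i-1}\}$. For any $j\le w$ and any $i\le j$ one has $\{v_1,\dots,v_{i-1}\}\subseteq W_j:=\{v_1,\dots,v_j\}$, hence $|N_G(v_i)\cap(V\setminus W_j)|\le k-1$; summing over $i\le j$ gives $e_G(W_j,V\setminus W_j)\le (k-1)j$. The point is that this localises the obstruction to a set of a \emph{prescribed} size: if $w\ge w_0:=\lceil\gamma n\rceil$ then $W_{w_0}$ is a set of exactly $w_0$ vertices with $e_G(W_{w_0},V\setminus W_{w_0})\le (k-1)w_0$. (It is essential to use the monotonicity of the peeling order here: one cannot simply pass to an arbitrary subset of $V\setminus V(\Gk)$, since a removed vertex may have many neighbours inside $V\setminus V(\Gk)$.) Thus the event $\{w\ge\gamma n\}$ is contained in the event that some $W\subseteq V$ of size $w_0$ sends at most $(k-1)w_0$ edges to its complement.

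Next I would run a first-moment bound on this latter event. For a fixed $W$ with $|W|=w_0$, the variable $e_G(W,V\setminus W)$ has law $\bin\!\big(w_0(n-w_0),p\big)$, with mean $w_0(n-w_0)p\ge (1-\delta)\lambda w_0$ once $w_0/n\le\delta$, which holds for large $n$ since $\gamma<\delta=10^{-3}$. Using $\lambda\ge (7k-1)/3$ one checks $(1-\delta)\lambda>k-1$ for all $k\ge 15$, so the Chernoff lower-tail estimate $\P(\bin(N,p)\le a)\le (Np/a)^a e^{a-Np}$ applies; as its right-hand side is decreasing in the mean whenever the mean exceeds $a$, we may evaluate it at the extreme value $(1-\delta)\lambda w_0$, obtaining
\[
\P\big(e_G(W,V\setminus W)\le (k-1)w_0\big)\le \Big[\big(\tfrac{e\lambda}{k-1}\big)^{k-1}e^{-(1-\delta)\lambda}\Big]^{w_0}.
\]
Multiplying by $\binom{n}{w_0}\le (en/w_0)^{w_0}\le (e/\gamma)^{w_0}$ (valid since $w_0\ge\gamma n$) and using the definition of $\gamma$ in~\eqref{eq-gamma-def}, the resulting base equals $\frac{e}{\gamma}\big(\tfrac{e\lambda}{k-1}\big)^{k-1}e^{-(1-\delta)\lambda}=\frac1{1+\delta}$, so
\[
\P(w\ge\gamma n)\le (1+\delta)^{-w_0}\le (1+\delta)^{-\gamma n}=O(e^{-an})
\]
for a suitable $a>0$, which is exactly the claimed bound on $|V(\Gk)|$.

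Finally, to see $\gamma<10^{-3}$: viewing $\gamma(k,\lambda)$ as a function of $\lambda$, one has $\frac{\partial}{\partial\lambda}\log\gamma=\frac{k-1}{\lambda}-(1-\delta)$, which is negative throughout $\lambda\ge (7k-1)/3$ because $(7k-1)/3>(k-1)/(1-\delta)$ for $k\ge15$. Hence $\gamma$ is maximised at $\lambda=(7k-1)/3$, and substituting this value (using $(7k-1)/(3(k-1))\le 7/3$ and $k\ge15$) gives a quantity below $10^{-3}$ that moreover decreases in $k$. The main obstacle in the argument is the structural step that pins the obstruction down to a set of size exactly $w_0$ with few outgoing edges; once that is in place, the rest is a single tuned first-moment computation, the tuning being precisely that the definition of $\gamma$ forces the constant to come out to $1/(1+\delta)<1$.
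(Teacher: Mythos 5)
Your proof is correct and follows essentially the same route as the paper's: identify a set of prescribed size $\lfloor\gamma n\rfloor$ that sends few edges to its complement, union-bound over such sets, use a lower-tail concentration estimate tuned so that $\gamma$'s definition makes the base come out to $1/(1+\delta)<1$, and separately verify $\gamma<10^{-3}$ by showing $\gamma(k,\cdot)$ is decreasing on the allowed range and evaluating at the endpoint. The only noteworthy difference is that you are more explicit (and in fact more careful) about the structural step: the paper phrases the obstruction as an arbitrary set $S$ of removed vertices with $e_G(S,V(\Gk))\le(k-1)|S|$ but then bounds $e_G(S,V\setminus S)$, whereas you correctly observe that one must take $S$ to be an initial segment of the peeling order so that the bound applies to the full cut $e_G(S,V\setminus S)$; and you use the standard Chernoff lower-tail estimate where the paper bounds the binomial tail directly, a cosmetic difference.
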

\begin{proof}
Consider a potential set $S$ of $s=\lfloor \gamma n\rfloor$ vertices that do not belong to the $k$-core.
We note that $\gamma$ is decreasing in $\lambda$ in the range $\lambda\geq k$ and so
$\gamma(k,\lambda) \leq \gamma(k,\lambda_0)$ for $\lambda_0(k) =(7k-1)/3 = \frac73(k-1) + 2$.
At this value of $\lambda$ we have
\begin{equation}
  \label{eq-gamma(k,lambda0)}
  \gamma(k,\lambda_0) = (1+\delta) e^{1-2(1-\delta)} \left(\frac{7e}3 \left(1 + \frac6{7(k-1)}\right)e^{-(1-\delta)\frac73}\right)^{k-1}\,,
\end{equation}
and as the last factor is less than $1$ for any $k\geq 4$, the entire expression is decreasing in $k$ and so it is upper bounded by its value at $k=15$, which,
as one can easily verify, is strictly less than $10^{-3}= \delta $.

The iterative construction of the $k$-core via successively deleting vertices of degree less than $k$ reveals that necessarily there are at most $(k-1)s$ edges in the cut between $S$ and $V(\Gk)$.
As the expected number of edges in this cut is $s(n-s)p \geq s \lambda/2 > s(k-1)$, the probability to encounter such a set $S$ is at most
\begin{align*}
\left(s(k-1)+1\right) \binom{n}{s}&\binom{s(n-s)}{s(k-1)} p^{s(k-1)}(1-p)^{s(n-s)-s(k-1)}\leq
\Bigg[(1+o(1))\frac{e n}{s e^{p(n-s)}} \left(\frac{e(n-s)p}{k-1} \right)^{k-1} \Bigg]^s \\
&=
\Bigg[\frac{e+o(1)}{\gamma e^{\lambda(1-\gamma)}}\left(\frac{e\lambda}{k-1}\right)^{k-1} \Bigg]^s
= \Bigg[\frac{1+o(1)}{ (1+\delta) \, e^{(\delta-\gamma)\lambda}}  \Bigg]^s < e^{- a s}
\end{align*}
for some absolute constant $a>0$, where the second equality substituted the definition of $\gamma$ in~\eqref{eq-gamma-def} and the last inequality justified by the fact that $\gamma < \delta$.
\end{proof}

\begin{proof}
  [\textbf{\emph{{Proof of Lemma~\ref{lem-supercritical}}}}]
Let $H\sim\cG(n,m)$ for $\frac16(7k - 1)n \leq m \leq n \log n$.
Note that the edge density in $H$ is $p\sim\lambda/n$ for $(7k-1)/3 \leq \lambda \leq 2\frac{\log n}n$,
and in particular for $k\geq 15$ we see that $\lambda$ satisfies the hypotheses of Claims~\ref{clm-expansion-large-c} and~\ref{clm-core-size}.
As per the conclusions of these claims, every set $X\subset V$ of size $\frac12 \lambda^{-5/3} n \leq |X| \leq n/5$ has $|N_H(X)\setminus X|>\frac52 |X|$, and $|V(\Gk[H])| \geq (1-\gamma)n$ for $\gamma < 0.001$ as defined
in~\eqref{eq-gamma-def}, except with probability $O(\exp(-a n))$ for some absolute constant $a>0$.
 Letting $G$ be the result of adding $n/8$ uniformly chosen new edges to $H$, its edge density is asymptotically $(\lambda+\frac14)/n$, and appealing to Claim~\ref{clm-G'-expansion} we find that with probability $1-o(n^{-3})$ every subgraph
$F\subset G$ with minimum degree $k$ has $|N_{F}(X)\setminus X| > 2|X|$
for any $X\subset V(F)$ of size at most $\frac45 \lambda^{-5/3} n$.
Condition on these properties of $H$ and $G$, from which we will now derive
the required expansion properties of $G'$.

By definition, the graph $G'$ has $V'=V(\Gk)$ as its vertex set, and
its minimum degree is at least $k$, since every vertex $u\in V' \setminus V(\Gk[H])$
has the same degree in $G'$ as it has in $\Gk$ (at least $k$),
whereas every $u\in V(\Gk[H])$
has degree at least $k$ already within $\Gk[H] \subset G'$. Thus,
thanks to our conditioning, every $X \subset V'$ such that $|X| \leq \frac45 \lambda^{-5/3} n$
satisfies $|N_{G'}(X)\setminus X|>2|X|$.

It remains to treat sets $X\subset V'$ of size $\frac45 \lambda^{-5/3} n \leq |X| \leq n/5$. We claim that the proof will be concluded once we show that
\begin{align}
  \label{eq-gamma-theta-relation}
  \gamma(k,\lambda) \leq \frac25 \lambda^{-5/3}\qquad \mbox{ for any }\lambda\geq (7k - 1)/3 \,.
\end{align}
To see this, recall by our assumptions, for any set $X$ in this size range, its set of external neighbors
$Y =N_H(X)\setminus X $ satisfies $|Y| > \frac52 |X|$. Out of this set $Y$, all but at most
$\gamma n $ vertices belong to $V'$ (and hence belong to $N_{G'}(X)\setminus X$, as the
induced subgraph of $H$ on $V'$ is a subgraph of $G'$). Thanks to~\eqref{eq-gamma-theta-relation}
we have $\gamma n \leq |X|/2$, thus in particular $|N_{G'}(X)\setminus X| > 2|X|$, as desired.

Turning our attention to~\eqref{eq-gamma-theta-relation} and recalling the definition of $\gamma$ from~\eqref{eq-gamma-def}, we need to show that
\[ (1+\delta) \left(\frac{e\lambda}{k-1}\right)^{k-1} e^{1-(1-\delta)\lambda} \cdot \frac52\lambda^{5/3} \leq 1\qquad\mbox{ for }\delta=10^{-3}\,.\]
Since the function $x\mapsto x^{k-1+5/3} \exp(-(1-\delta)x)$ has a unique extremum in the form of a global maximum at $x=(k-1+\frac53)/(1-\delta) < \frac73 k - 1$, it suffices to establish the above inequality at $\lambda_0 = \frac73 k -\frac13 = \frac73(k-1)+2$. Following~\eqref{eq-gamma(k,lambda0)}, the left-hand-side of the sought inequality
then becomes
\[
(1+\delta) \cdot 5\cdot 2^{2/3} \left(1+\frac76(k-1)\right)^{5/3} e^{1-2(1-\delta)} \left(\frac{7e}3 \left(1 + \frac6{7(k-1)}\right)e^{-(1-\delta)\frac73}\right)^{k-1}\,.
\]
Denoting this expression by $\xi_k$, we need to show that $\xi_k \leq 1$ for all $k\geq 15$. We see that for any $k\geq 2$,
\[ \frac{\xi_{k}}{\xi_{k-1}} = \left(1+\frac7{7k-8}\right)^{5/3} \left(\frac{7e}3 e^{-(1-\delta)\frac73} \right)
\left(1+\frac6{7(k-1)}\right)\left(1-\frac6{(k-1)(7k-8)}\right)^{k-2}\,.\]
Clearly each of the three factors that depend on $k$ is decreasing in $k$ (with the last of these being strictly less than $1$ and raised to a power growing with $k$). In particular,
for any $k\geq 5$ we have
\[\frac{\xi_k}{\xi_{k-1}} \leq \frac{\xi_5}{\xi_4} < 0.95 < 1\,.\]
Equivalently, $\xi_k$ is strictly decreasing in $k$ in the range $k\geq 5$, and so for any $k\geq 15$ we have
 \[ \xi_k < \xi_{15} < 0.95 < 1 \,,\]
thus establishing~\eqref{eq-gamma-theta-relation}, as required.
\end{proof}

\section{Packing edge-disjoint Hamilton cycles}\label{sec:ham-pack}

In this section we prove Theorem~\ref{thm-2}. Since several of the
ideas and techniques used here are rather similar to those applied
beforehand, we will allow ourselves to be somewhat brief at times.
Also, we will borrow extensively from the notation and terminology of the
previous sections.

First of all, let
\[
k_1=\left\lfloor\frac{k-3}{2}\right\rfloor \,.
\]

Similarly to Theorems \ref{thm-ham-small-c}
and~\ref{thm-ham-large-c}, the following two theorems cover the
critical regime (which is narrower here) and the supercritical
regime, respectively.

\begin{theorem}
  \label{thm-pack-small-c}
Let $k$ be a large enough constant, and let $G\sim \cG(n,m)$ for
$m=cn/2$, with $c$ satisfying $k\leq c \leq k+100\sqrt{k\log k}$.
Then with probability $1-o(n^{-2})$ either $\Gk$ is empty or it
contains a family of $k_1$ edge-disjoint Hamilton cycles.
\end{theorem}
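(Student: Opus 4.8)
The plan is to mirror the structure of the proof of Theorem~\ref{thm-ham-small-c}, using the ``conditioning on the future'' sprinkling setup, but replacing the single 2-factor there with $k_1$ edge-disjoint 2-factors, which we will then upgrade to Hamilton cycles one at a time. Concretely, as before let $\cS=(e_1,\dots,e_m)$ be a uniformly random ordered edge sequence, set $m'=m-n/\log\log n$, and let $G=([n],\{e_1,\dots,e_m\})$, $H=([n],\{e_1,\dots,e_{m'}\})$. We again partition $V(\Gk)$ into $\cB_k=\{u\in V(\Gk): |N_H(u)\cap V(\Gk)|\ge k\}$ and $\cC_k=V(\Gk)\setminus\cB_k$, and the very same argument as in Theorem~\ref{thm-ham-small-c} shows that conditioning on $\cB_k,\cC_k$, on $\cJ$ (the indices of the final $m-m'$ edges internal to $\cB_k$) and on all edges outside $\cJ$, the remaining $J=|\cJ|$ edges are uniform over the missing internal edges of $\cB_k$; and the claim bounding $J\ge n/(100\log\log n)$ with probability $1-O(n^{-9})$ carries over verbatim (the degree-distribution and max-degree estimates it uses only require $k$ large and $c=O(k)$, which holds here since $c\le k+100\sqrt{k\log k}$).

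The new ingredient is the structural lemma analogous to Lemma~\ref{lem-critical} that we need for the graph $G'=\Gk[F]$, $F=([n],\cS^*)$: instead of producing one 2-factor (a near-perfect matching union a near-perfect matching), we must produce $k_1$ edge-disjoint 2-factors in $G'$ each consisting of a single path plus at most $n/\sqrt{\log n}$ cycles, and we need the expansion property $|N_{G'}(X)\setminus X|>2|X|$ for $|X|\le n/c^2$ to \emph{survive the removal of the other $k_1-1$ factors} (i.e.\ of $O(k)$ edges per vertex). For the 2-factors: since $\delta(G')\ge k$ and $G'$ is (as in the present regime) a very good expander, we can extract a $(k-2)$-factor or $(k-1)$-factor using the results of Pra{\l}at--Verstra{\"e}te--Wormald~\cite{PVW} and Chan--Molloy~\cite{CM} cited earlier; a regular graph of even degree $2k_1$ decomposes into $k_1$ edge-disjoint 2-factors by (repeated) Petersen 2-factorization. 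Since $2k_1\le k-3$, this fits inside a $(k-2)$- or $(k-1)$-factor, leaving at least $k-1-2k_1\ge 1$ extra degree at every vertex. We then pick the single path per factor and invoke Claim~\ref{clm-max-disjoint-cycles} to bound the number of cycles in each by $n/\sqrt{\log n}$.

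For the expansion-after-removal step: after deleting $k_1-1$ of the 2-factors we are left with a subgraph $G''\subset G'$ still of minimum degree at least $k-2(k_1-1)\ge 5$ — but that is too weak for Pósa expansion directly. The fix, exactly as elsewhere in the paper, is to note that $G''$ is an induced-minus-few-edges subgraph of the random graph $\cG(n,c/n)$ with $\delta\ge k-2k_1+2$; the relevant lower bound $k-2k_1+2\ge (k+3)/2\ge 9$ for large $k$ (since $k_1=\lfloor(k-3)/2\rfloor$ gives $k-2k_1\ge 3$) suffices for a version of Claim~\ref{clm-G'-expansion}/Claim~\ref{clm-G'-expansion-large-sets} to yield $|N_{G''}(X)\setminus X|>2|X|$ for $|X|\le n/c^2$ with probability $1-o(n^{-3})$ — this is where ``$k$ large enough'' enters, and the constant hypotheses $k\le c\le k+100\sqrt{k\log k}$ keep all the exponential sums convergent. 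Granting all this, the endgame is iterative: process the 2-factors one by one; for the current factor $T_i$ (path $P_i$ plus $\le n/\sqrt{\log n}$ cycles) we apply Lemma~\ref{lem-Posa} with $r=n/c^2$ to $P_i$ inside $G''_i=G'\setminus(\bigcup_{j<i}\text{Hamilton cycle}_j\ \cup\ \bigcup_{j>i}T_j)$, collect boosters among the unrevealed internal edges of $\cB_k$ exactly as in Theorem~\ref{thm-ham-small-c}, absorb all cycles of $T_i$ into $P_i$ and close it into a Hamilton cycle $C_i$, then move to $T_{i+1}$. The total number of boosters needed is $O(k\cdot n/\sqrt{\log n})=o(n/\log\log n)$, comfortably below the $J\ge n/(100\log\log n)$ uniform internal edges available, and each is a booster with probability $\ge 1/c^4$, so a $\mathrm{Bin}(\cdot,1/c^4)$ concentration finishes the proof with the stated $1-o(n^{-2})$.

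The main obstacle I anticipate is the expansion-after-removal issue: we must guarantee that after peeling off $k_1-1\approx k/2$ two-factors the residual graph still Pósa-expands sets up to size $n/c^2$. Since this leaves minimum degree only about $k/2$, a naive bound is insufficient, and the clean way out is to run the union-bound computation of Claim~\ref{clm-G'-expansion} with the weaker minimum-degree parameter $k-2k_1+2$ and check that the exponential sums still beat $n^{-3}$ — which forces $k$ to be a sufficiently large absolute constant and is precisely the point at which we cannot push the argument down to small $k$ as in Theorem~\ref{thm-1}. A secondary (but routine) point is that Claim~\ref{clm-max-disjoint-cycles} must be applied to each of the $k_1$ factors and then union-bounded, which is harmless since $k_1$ is a constant.
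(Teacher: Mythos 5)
Your overall strategy — the same ``condition on the future'' sprinkling setup as in Theorem~\ref{thm-ham-small-c}, producing $k_1$ edge-disjoint $2$-factors from a $(2k_1)$-factor supplied by the Pra{\l}at--Verstra{\"e}te--Wormald / Chan--Molloy machinery, and then upgrading them one at a time via P\'osa boosters sprinkled from $\cJ_i$ — is exactly what the paper does. However, your key technical step (expansion after removing the other $2$-factors) contains a genuine arithmetic error that hides the real difficulty. You write $k - 2k_1 + 2 \geq (k+3)/2$, but this is false for every $k \geq 10$: since $k_1 = \lfloor(k-3)/2\rfloor$ we have $2k_1 \geq k-4$, so $k - 2(k_1-1) = k - 2k_1 + 2$ equals $5$ or $6$, independently of $k$. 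After peeling off $k_1-1$ of the $2$-factors the residual minimum degree is only about $5$, and consequently the ``version of Claim~\ref{clm-G'-expansion}'' you propose cannot work: the union bound in~\eqref{eq-b-expansion} over small sets needs the minimum-degree parameter to exceed roughly $2(b+2)$ for the small-$x$ sum to be $o(n^{-3})$, so with minimum degree $5$ and $b=2$ the exponent $-d/2 + b + 1$ is positive and the bound fails already for sets of constant size. The paper instead proves a genuinely different statement, Lemma~\ref{lem-expansion}: it first bounds, by a union bound over the \emph{ambient} random graph $\cG(n,c/n)$ with $c$ large, the number of edges spanned by all sets of size up to $3n/c^{10}$ (they span at most $1.2$ times their size), and then observes that this sparsity together with minimum degree $5$ forces $|N_H(X)\setminus X|\geq 2|X|$ in \emph{any} subgraph $H$. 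That two-step sparsity argument is where ``$c$ large'' is actually exploited, and it works at minimum degree $5$ but only up to scale $n/c^{10}$ rather than $n/c^2$, which correspondingly lowers the booster probability to about $c^{-21}$ instead of your $1/c^4$ (a harmless change since $J \geq n/(100\log\log n)$ still dwarfs $O(k\cdot n/\sqrt{\log n})$).

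You also omit a connectivity step. In the proof of Theorem~\ref{thm-ham-small-c} the graph $G'$ handed to P\'osa is connected by Lemma~\ref{lem-critical}, but here the backbone $G_i = G'-(H_1\cup\cdots\cup H_{i-1})\cup(\Lambda_{i+1}\cup\cdots\cup\Lambda_{k_1})$ is not the full $G'$ and may be disconnected, so P\'osa alone cannot merge cycles living in different components. The paper handles this by noting that, by Lemma~\ref{lem-expansion}, every component of $G_i$ has size at least $n/c^{10}$, hence $G_i$ has at most $c^{10}$ components, and that spending roughly $\log^2 n$ of the random $\cJ_i$-edges suffices to connect $G_i$ w.h.p.\ before the remaining edges are used to hit boosters. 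With the arithmetic corrected, the expansion lemma replaced by Lemma~\ref{lem-expansion}'s sparsity argument, and this connectivity step added, the rest of your iteration (including releasing unused $\Lambda_i$-edges back to maintain degree $\geq 5$) matches the paper.
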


\begin{theorem}
  \label{thm-pack-large-c}
Let $k$ be a large enough constant, and let $H\sim \cG(n,m)$ for
$m=cn/2$, with $c=c(n)$ satisfying $k+99\sqrt{k\log k}\le c\le 2\log
n$. Let $G$ be the graph obtained by adding $n$ uniformly
distributed new edges to $H$. Then the probability that $\Gk[H]$
contains a family of $k_1$ edge disjoint Hamilton cycles, and at the
same time $\Gk[G]$ does not contain a family of $k_1$ edge-disjoint
Hamilton cycles, is $o(n^{-2})$.
\end{theorem}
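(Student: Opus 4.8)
The plan is to run the argument behind Theorem~\ref{thm-ham-large-c}, but with $k_1$ edge-disjoint $2$-factors in place of a single one. Assume $\Gk[H]$ contains $k_1$ edge-disjoint Hamilton cycles $D_1,\dots,D_{k_1}$ (otherwise there is nothing to prove) and put $\cB_k=V(\Gk[H])$, so that $D_1\cup\dots\cup D_{k_1}\subseteq H[\cB_k]=\Gk[H]$. As in Section~\ref{sect2} I would reveal the $n$ new edges in a staggered way: for each new edge first expose whether it has an endpoint in $V'\setminus\cB_k$ (revealing the edge in that case), and collect the remaining new edges --- those internal to $\cB_k$ --- into a hidden set $\cS$, disclosing only $J:=|\cS|$. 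This exposes the subgraph $G'=(V',E')$ on $V'=V(\Gk[G])$ defined as in Lemma~\ref{lem-supercritical}, conditionally on which the $J$ edges of $\cS$ are uniform among the pairs inside $\cB_k$ missing from $G'$; since $|\cB_k|=(1-o(1))n$ while only $n$ edges are added (so $\Delta(G)=O(\log n)$), $J\ge(1-o(1))n$ except with probability $e^{-\Omega(n)}$.

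The first substantial step is an analogue of Lemma~\ref{lem-supercritical}, valid for $k+99\sqrt{k\log k}\le c\le2\log n$ and strengthened so as to survive the removal of sparse spanning subgraphs: with probability $1-o(n^{-2})$, $|\cB_k|>(1-\gamma)n$ with $\gamma\le k^{-\Omega(1)}$, every vertex of $G'$ has degree $\ge k$, and for \emph{every} family of at most $k_1-1$ pairwise edge-disjoint subgraphs $F_1,\dots,F_{k_1-1}$ of $G'$ of maximum degree $\le2$, the graph $G'-\bigcup_iE(F_i)$ is connected and satisfies $|N(X)\setminus X|>2|X|$ for every $X\subseteq V'$ with $|X|\le\delta_0 n$, for some $\delta_0>0$ depending only on $k$ and $c$; moreover (as in Claim~\ref{clm-max-disjoint-cycles}) every such residual graph has $o(n)$ vertex-disjoint cycles. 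I would then split $\cS$ into batches $R^{(1)},\dots,R^{(k_1)}$ of size $\Omega(n)$, each still uniform among the missing pairs inside $\cB_k$, and take as the $j$-th skeleton the cycle $D_j$ together with every vertex of $V'\setminus\cB_k$ regarded as a trivial cycle, exactly as in the proof of Theorem~\ref{thm-ham-large-c}.

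Next I would build the Hamilton cycles $C_1,\dots,C_{k_1}$ one by one. When it is index $j$'s turn, work inside $G_j:=(G'\cup R^{(j)})-\big(\bigcup_{i<j}E(C_i)\cup\bigcup_{i>j}E(D_i)\big)$: the removed edges form at most $k_1-1$ pairwise edge-disjoint subgraphs of $G'$ of maximum degree $\le2$ (each $C_i$ with $i<j$ was built avoiding $E(D_{i'})$ for all $i'>i$, hence for all $i'>j$, and the $C_i$ are mutually edge-disjoint), so the strengthened expansion applies to $G_j$, which moreover still contains the Hamilton cycle $D_j$ of $\cB_k$ and has minimum degree $\ge k-2(k_1-1)\ge5$ --- in particular it is connected. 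Running the rotation--extension of Lemma~\ref{lem-Posa} inside $G_j$ exactly as in the proof of Theorem~\ref{thm-ham-large-c}: a longest path of $G_j$ is $\Omega(n^2)$ boosters away from a Hamilton cycle on $V'$, at most $\gamma n^2$ of which are incident to $V'\setminus\cB_k$, and a constant fraction of the currently missing $\cB_k$-pairs are boosters throughout; since $|R^{(j)}|=\Omega(n)$ while only $|V'\setminus\cB_k|\le\gamma n$ trivial cycles need absorbing, one obtains a Hamilton cycle $C_j$ on $V'$ except with probability $e^{-\Omega(n)}$. Edge-disjointness is automatic: every booster used for $C_j$ is an edge of $\cS$, hence a non-edge of $G'$ and so in no $D_i$ and in no $C_i$ ($i<j$); the batches $R^{(j)}$ are disjoint; and $G_j$ drops $\bigcup_{i<j}E(C_i)$ while reserving $\bigcup_{i>j}E(D_i)$. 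Finally, as in the derivation of Theorem~\ref{thm-1} from Theorems~\ref{thm-ham-small-c} and~\ref{thm-ham-large-c}, a union bound over the supercritical window propagates the $k_1$-packing forward, while Theorem~\ref{thm-pack-small-c} seeds it in the critical window, giving Theorem~\ref{thm-2}.

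The main obstacle is the strengthened expansion of the second paragraph: after deleting $k_1\approx k/2$ near-spanning cycles the residual graph has minimum degree only $k-2(k_1-1)\approx5$, where the crude counting of Claim~\ref{clm-G'-expansion} collapses and one cannot even afford a naive union bound over the $2^{\Theta(cn)}$ candidate families of removed subgraphs. The hypothesis $c\ge k+99\sqrt{k\log k}$ is precisely what rescues it: the set $L$ of vertices of $G'$ of degree below, say, $k+\tfrac12\cdot99\sqrt{k\log k}$ --- the only vertices that can be left with bounded degree after the deletions --- has density at most $k^{-\Omega(1)}$, being a Poisson lower tail of the core's degree distribution, while every vertex outside $L$ retains degree $\Omega(\sqrt{k\log k})$ in any residual graph; hence a non-expanding set must either consist almost entirely of the sparse set $L$ or be anchored on many vertices of degree $\Omega(\sqrt{k\log k})$, and in both cases the expansion-failure probability beats $2^{\Theta(cn)}$ once $k$ is large. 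The same margin also keeps $|V'\setminus\cB_k|\le\gamma n\le k^{-\Omega(1)}n$ negligible next to the per-index sprinkling budget $\Omega(n/k_1)$, which is why the theorem is stated only for sufficiently large $k$.
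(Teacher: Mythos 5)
Your overall iterative structure matches the paper: set $\cB_k=V(\Gk[H])$, hide the sprinkle edges internal to $\cB_k$, split them into $k_1$ batches, convert each $D_j$ into a Hamilton cycle $H_j$ of $\Gk[G]$ while removing $\bigcup_{i<j}E(H_i)\cup\bigcup_{i>j}E(D_i)$, and note that the residual graph has minimum degree $\ge k-2(k_1-1)\ge5$. You also correctly identify the central obstacle --- after the deletions only minimum degree $5$ is guaranteed, so the naive union bound over $2^{\Theta(cn)}$ families of removed $2$-factors is hopeless. Where your plan diverges from the paper is in how that obstacle is overcome.

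You propose a strengthened supercritical expansion lemma quantified \emph{over all} families of edge-disjoint max-degree-$\le2$ subgraphs removed from $G'$, and sketch a route through a sparse set $L$ of low-degree core vertices exploiting the margin $c\ge k+99\sqrt{k\log k}$. The paper sidesteps all of this with Lemma~\ref{lem-expansion}: for $c\ge c_0$ large, $\cG(n,c/n)$ is a.a.s.\ such that \emph{every} subgraph of minimum degree $\ge5$ has $|N(X)\setminus X|\ge2|X|$ for $|X|\le n/c^{10}$. The proof is a bare edge count --- every set $Z$ with $C_0\le|Z|\le3n/c^{10}$ spans at most $1.2|Z|$ edges of $G$, a property of $G$ alone --- and a non-expanding $X$ in any degree-$\ge5$ subgraph would force $e(X\cup Y)\ge3.8|X|>1.2|X\cup Y|$, a contradiction. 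There is no enumeration of deletion families at all; once the sparsity of small sets of $G$ holds, the expansion is deterministic for each $G_j$. This is the main idea your proposal is missing, and the margin $c\ge k+99\sqrt{k\log k}$ is not what makes it work (only $c\ge c_0$ large is used); that margin enters instead through Lemma~\ref{lem-coresize}, which shows $|V(\Gk[G])\setminus V(\Gk[H])|\le n/c^{200}$, so the trivial cycles to absorb are few.

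Two further discrepancies. First, your strengthened lemma also asserts that $G'-\bigcup_iE(F_i)$ is connected; the paper makes no such claim and it need not hold. Instead, the expansion forces each component of $G_j$ to have size $\ge n/c^{10}$, hence there are at most $c^{10}$ of them, and the paper spends $\log^2n$ of the batch $R_j$ first just to merge the components, then spends the remaining $\ge n/(4k_1)$ sprinkle edges on boosters. You should add this two-phase use of each batch rather than asserting a priori connectivity. Second, the exposure rule as written (``has an endpoint in $V'\setminus\cB_k$'') is circular since $V'=V(\Gk[G])$ is not determined before the new edges are placed; the test must be against $V\setminus\cB_k$, i.e.\ expose the edge iff some endpoint lies outside $V(\Gk[H])$, which is what the paper does and what the rest of your argument implicitly assumes.
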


The above two theorems, combined with the classical result of
Bollob\'as and Frieze~\cite{BolF} that the random graph
$G\sim \cG(n,m)$ with $m=\frac{n}{2}\left(\log n +(k-1)\log\log n
+\omega_n\right)$ and any diverging increasing sequence $\omega_n$ w.h.p.\ contains
$\left\lfloor k/2\right\rfloor$ edge-disjoint Hamilton
cycles, clearly establishes Theorem~\ref{thm-2}.

In our proofs we prefer to work with the binomial random graph model
$\cG(n,p)$ instead of $\cG(n,m)$; standard arguments about the
essential equivalence of the models can be invoked to transfer our
claims to the model $\cG(n,m)$.

 We start  with the following lemma, guaranteeing typical local
 expansion in subgraphs of minimum degree at least 5 in sparse
 random graphs.

\begin{lemma}\label{lem-expansion}
Let $G\sim \cG(n,p)$ for $p=c/n$ with $c_0\le c(n)\le 2\log n$.
If $c_0$ is large enough, then with probability $1-o(n^{-2})$ every
subgraph $H$ of $G$ of minimum degree $\delta(H)\ge 5$ has the
following property: every subset $X\subset V(H)$ of size $|X|\le
n/c^{10}$ satisfies $|N_H(X)\setminus X|\ge 2|X|$.
\end{lemma}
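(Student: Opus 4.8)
The plan is to prove Lemma~\ref{lem-expansion} by a first-moment bound over potential non-expanding sets, much as in Claim~\ref{clm-G'-expansion}; the subtlety is that minimum degree $5$ together with expansion by a factor of $2$ is precisely the borderline case where a crude union bound fails to close, so the restriction $|X|\le n/c^{10}$ has to be used with care.

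First I would fix an arbitrary $H\subseteq G$ with $\delta(H)\ge5$ and assume towards a contradiction that some $X\subseteq V(H)$ with $3\le|X|\le n/c^{10}$ has $|N_H(X)\setminus X|\le 2|X|-1$ (the cases $|X|\le2$ are vacuous: $\delta(H)\ge5$ already forces at least $2|X|$ external $H$-neighbours). Writing $Y=N_H(X)\setminus X$, $x=|X|$, $y=|Y|\le2x-1$ and $W=X\cup Y$, every vertex of $X$ has all of its (at least five) $H$-neighbours inside $W$, and every vertex of $Y$ has an $H$-neighbour in $X$; since $H\subseteq G$ these hold in $G$ as well. Thus $2e_G(X)+e_G(X,Y)=\sum_{v\in X}\deg_{G[W]}(v)\ge5x$, and combining this with $e_G(X)\le\binom x2$ and $e_G(X,Y)\ge y$ shows that the number of $G$-edges incident to $X$ inside $W$ is at least
\[
t(x,y):=\max\Big\{\,5x-\tbinom x2\,,\ \lceil\tfrac{5x+y}{2}\rceil\,\Big\}.
\]
A union bound over the choices of $X$ and $Y$ then yields
\[
\P\big[\text{such an } H \text{ exists}\big]\ \le\ \sum_{x=3}^{\lfloor n/c^{10}\rfloor}\ \sum_{y=0}^{2x-1}\binom nx\binom ny\binom{\binom x2+xy}{t(x,y)}\,p^{\,t(x,y)}.
\]

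Next I would estimate this double sum, treating $x\in\{3,4,5\}$ and $x\ge6$ separately. For $x\in\{3,4,5\}$ there are only finitely many pairs $(x,y)$, and here the first term $5x-\binom x2$ of $t(x,y)$ — equal to $12,14,15$ for $x=3,4,5$ — is decisive: one checks directly that $x+y-t(x,y)\le-3$ for every admissible $(x,y)$, so the corresponding summand is $O\big(n^{\,x+y-t(x,y)}(\log n)^{O(1)}\big)=o(n^{-2})$; with only $t(x,y)\ge\tfrac{5x}{2}$ the pair $(x,y)=(3,5)$ would instead contribute $n^{-2+o(1)}$, so the sharper threshold — reflecting the fact that a tiny vertex set cannot be too dense in a sparse random graph — is genuinely needed. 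For $x\ge6$ I would use $t(x,y)\ge\lceil\tfrac{5x+y}{2}\rceil$ and the estimates $\binom nx\le(en/x)^x$, $\binom ny\le(en/y)^y$, $\binom{\binom x2+xy}{t}\le(ex)^t$ (the last valid since $y\le2x$ and $t\ge\tfrac{5x}{2}$), bounding each summand by $(en/x)^x(en/y)^y(exc/n)^{\lceil(5x+y)/2\rceil}$. The $y$-dependent part here is $(en/y)^y(exc/n)^{y/2}=\big(e\sqrt{excn}/y\big)^y$, which is increasing on $0\le y\le2x$ because $x\le n/c^{10}$ forces $2x<\sqrt{excn}$; hence it is largest at $y=2x$, and summing over $y$ collapses the sum to at most $(2x+1)\big[\tfrac{e^{13/2}}{4}(x/n)^{1/2}c^{7/2}\big]^x$. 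Since $x\le n/c^{10}$, the base here is at most $\tfrac{e^{13/2}}{4}c^{-3/2}$, which drops below $\tfrac12$ once $c_0$ is large enough; the $x=6$ term is already $O\big((\log n)^{O(1)}/n^3\big)$ and consecutive terms shrink by a factor below $\tfrac12$, so $\sum_{x\ge6}$ is $o(n^{-2})$ as well, finishing the proof.

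The step I expect to be the main obstacle is exactly this treatment of the smallest non-expanding sets. With $\delta\ge5$ and expansion factor $2$ one is on the boundary where the powers of $n$ in a naive union bound over $(X,Y)$ with $|Y|\approx2|X|$ cancel, so one must simultaneously exploit the exact minimum-degree-forced edge count $5x-\binom x2$ for $x\le5$ and retain the coupling between the $\binom ny$ choices of the external-neighbour set $Y$ and the large-deviation saving $p^{t(x,y)}$ — a coupling that only works in our favour because $|X|$ is as small as $n/c^{10}$. Squeezing the small cases from $n^{-2+o(1)}$ down to $o(n^{-2})$ is where essentially all the difficulty lies; the range $x\ge6$ is then comparatively routine.
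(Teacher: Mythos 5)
Your argument is correct, but it takes a genuinely different route from the paper's. The paper first proves two structural sparseness statements for $G\sim\cG(n,p)$, each failing with probability $o(n^{-2})$: every set of at most $C_0$ vertices spans at most $|X|+2$ edges, and every set of size between $C_0$ and $3n/c^{10}$ spans at most $1.2$ times its size. Conditioning on these, the paper derives the expansion property \emph{deterministically} for $C_0\le |X|\le n/c^{10}$, and handles the bounded-size sets by a short direct union bound (using $t\ge 4x-2$ from $e(X)\le x+2$ for $4\le x\le C_0$, and $t\ge 5x-\binom{x}{2}$ only for $x=3$). You instead run a single first-moment bound over all non-expanding pairs $(X,Y)$; the crucial extra saving for small $x$ comes not from sparseness conditioning but from the inequality $e_G(X,Y)\ge y$ (yielding $t\ge\lceil(5x+y)/2\rceil$), a constraint the paper does not exploit, and the $y$-optimization that collapses the $x\ge 6$ range is correctly carried out. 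The trade-off: the paper's sparseness facts are reusable structural lemmas and permit a somewhat smaller admissible $c_0$, while your version is more self-contained and avoids the intermediate conditioning step. One small imprecision worth noting: for $(x,y)=(5,9)$ the binding term in $t(x,y)$ is $\lceil(5x+y)/2\rceil=17$, not $5x-\binom{x}{2}=15$, so the first term is not actually decisive for all of $x\in\{3,4,5\}$, though the exponent $x+y-t\le -3$ that you rely on is still correct.
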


\begin{proof}
First observe that, for any given constant $C_0>0$, with probability
$1-o(n^{-2})$ every set $X\subset V(G)$ with $|X|\le C_0$ vertices
contains at most $|X|+2$ edges. Indeed, the probability that there
exists a subset violating this claim is at most
\[
\sum_{i\le
C_0}\binom{n}{i}\binom{\binom{i}{2}}{i+3}p^{i+3}=O(n^{C_0}p^{C_0+3})=o(n^{-2})\,
.
\]
Now we argue that typically in $\cG(n,p)$ all larger sets are still
fairly sparse. Specifically, we claim that there exists a large
enough $C_0$ such that with probability $1-o(n^{-2})$ every set
$Z\subset V(G)$ of cardinality $C_0\le |Z|\le 3n/c^{10}$
spans at most $1.2|Z|$ edges in $G$. Indeed, the probability that there exists a set $Z$ of cardinality $|Z|=z$ violating this claim
is at most
\begin{eqnarray*}
\sum_{z=C_0}^{3n/c^{10}}\binom{n}{z}\binom{\binom{z}{2}}{1.2z}p^{1.2z}&\le&
\sum_{z=C_0}^{3n/c^{10}}\left[\frac{en}{z}\, \left(\frac{ezp}{2.4}\right)^{1.2}\right]^z \leq
\sum_{z=C_0}^{3n/c^{10}}\left[5\left(\frac{z}{n}\right)^{0.2}c^{1.2}\right]^z\\
&\leq& \sum_{z=C_0}^{\log n} n^{-0.1z}+\sum_{z=\log n}^{3n/c^{10}}\left[10c^{-0.8}\right]^z=o(n^{-2})\,,
\end{eqnarray*}
for $c\ge c_0$ large enough and $z\ge C_0$ large enough.

Now take $C_0\le |X|\le n/ c^{10}$ and let $Y=N_H(X)\setminus X$.
Since all degrees in $H$ are at least 5, we deduce that
 $e_H(X,Y)\ge 5|X|-2e_H(X)$, and thus $e_H(X\cup
 Y)=e_H(X)+e_H(X,Y)\ge 5|X|-e_H(X)\ge 3.8|X|$. If $|Y|< 2|X|$,
 then $|X\cup Y|< 3|X|\le 3n/c^{10}$ and $e_H(X\cup
 Y)\ge 3.8|X|>1.2|X\cup Y|$ --- a contradiction.

For the sets $X$ of constant size $|X|=x\le C_0$, if
$|N_H(X)\setminus X|<2|X|$, then there is a set $Y$ of cardinality $|Y|=2x-1$
with $N_H(X)\setminus X\subseteq Y$. Since all degrees in $H$ are at least 5,
and $X$ spans at most $x+2$ edges, it follows that $e_H(X\cup Y)\ge
5x-e(X)\ge 4x-2$. The probability that there exists a pair
$X,Y$ with $|X|=x$ in $\cG(n,p)$ is bounded from above by
\[
\sum_{x\leq C_0} \binom{n}{x}\binom{n}{2x-1}\binom{\binom{3x-1}{2}}{4x-2}p^{4x-2}
=O\left(n^{3x-1}p^{4x-2}\right) \leq n^{-x+1+o(1)}=o(n^{-2})
\]
for $x\ge 4$. For $x=3$, a simple union bound shows that the
probability of the existence of a pair $X,Y$ as above is at most
$n^{-2x-1+\binom{x}{2}+o(1)}=o(n^{-2})$ and for $x=1,2$ the
sought claim follows from the minimum degree assumption.
\end{proof}

We will also need that typically in the supercritical regime the
$k$-core contains a proportion of vertices rapidly approaching 1.

\begin{lemma}\label{lem-coresize}
Let $G\sim \cG(n,p)$ for $p=c/n$ with $k+99\sqrt{k\log k}\le
c(n)\le 2\log n$. If $k$ is large enough, then with probability
$1-o(n^{-2})$ the $k$-core satisfies $|\Gk[G]|\ge
n\left(1-c^{-200}\right)$.
\end{lemma}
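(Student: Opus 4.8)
The plan is to follow the template of Claim~\ref{clm-core-size}: bound the number of vertices lying outside the $k$-core by a union bound over all candidate ``outside'' sets, reducing everything to one binomial tail estimate. Set $s=\lfloor c^{-200}n\rfloor$; since $c\le 2\log n$ we have $s\ge n/(2\log n)^{201}$ for large $n$, in particular $s\to\infty$. It suffices to show $\P\big(|V(\Gk[G])|<n(1-c^{-200})\big)=o(n^{-2})$, and this is exactly the event that at least $s$ vertices are deleted in the peeling process producing $\Gk[G]$.

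The conceptual heart of the proof is a deterministic observation about that peeling process. Run it (repeatedly delete vertices of current degree $<k$) and list the deleted vertices $u_1,u_2,\ldots,u_\sigma$ in the order they are removed, where $\sigma=|V\setminus V(\Gk[G])|\ge s$. Let $S=\{u_1,\ldots,u_s\}$ consist of the \emph{first} $s$ of them. When $u_i$ is removed its degree in the then-current graph — whose vertex set is $\{u_i,u_{i+1},\ldots,u_\sigma\}\cup V(\Gk[G])$ — is at most $k-1$, and for $i\le s$ this vertex set contains $V\setminus S=\{u_{s+1},\ldots,u_\sigma\}\cup V(\Gk[G])$. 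Hence every vertex of $S$ has at most $k-1$ neighbours in $V\setminus S$, so $e_G(S,V\setminus S)\le(k-1)s$. Choosing the first $s$ deleted vertices, rather than an arbitrary $s$-subset of the non-core vertices, is precisely what forces the cut into the \emph{fixed} complement $V\setminus S$ — a set of size exactly $n-s$ — to be small, which is what makes the union bound below clean.

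Since for a fixed $s$-set $S$ the quantity $e_G(S,V\setminus S)$ is distributed as $\bin(s(n-s),c/n)$, a union bound over the $\binom{n}{s}$ choices of $S$ gives
\[
\P\big(|V(\Gk[G])|<n(1-c^{-200})\big)\ \le\ (s(k-1)+1)\binom{n}{s}\binom{s(n-s)}{s(k-1)}\Big(\tfrac{c}{n}\Big)^{s(k-1)}\Big(1-\tfrac{c}{n}\Big)^{s(n-s)-s(k-1)}.
\]
Bounding $\binom{n}{s}\le(en/s)^s$ and using $s/n=O(k^{-200})$ — here one must retain the factor $1-s/n=1-o(1)$ rather than replace it by a constant such as $\tfrac12$, a step that was affordable in Claim~\ref{clm-core-size} only because there $\lambda\ge(7k-1)/3$ is bounded away from $k$ — one arrives at a bound of the form $\big[\,C\,c^{200}\big(\tfrac{ec}{k-1}\big)^{k-1}e^{-c}\,\big]^{s}$ for an absolute constant $C$. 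Writing $c=(k-1)t$, the logarithm of the base equals $\log C+200\log(k-1)-h(t)$ with $h(t)=(k-1)(t-1-\log t)-200\log t$. Since $h'(t)=(k-1)-(k+199)/t>0$ as soon as $t>(k+199)/(k-1)$, while the hypothesis $c\ge k+99\sqrt{k\log k}$ forces $t\ge 1+\delta_0$ with $\delta_0=(99\sqrt{k\log k}+1)/(k-1)>200/(k-1)$, the function $h$ is increasing on the whole relevant range, so $h(t)\ge h(1+\delta_0)=(1+o(1))\tfrac12(k-1)\delta_0^2=(1+o(1))\tfrac{99^2}{2}\log k$ (the $200\log t$ term being lower-order). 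Thus the base is at most $C\exp\big(200\log(k-1)-(1+o(1))\tfrac{99^2}{2}\log k\big)\le e^{-1}$ for $k$ large, and the whole probability is at most $(s(k-1)+1)e^{-s}=o(n^{-2})$.

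The argument is therefore routine once the peeling observation is in place; the only genuine obstacle is the bookkeeping of constants. Because $c$ may be as close to the emergence threshold as $k+99\sqrt{k\log k}$, the sole slack is the gap $c-(k-1)=\Theta(\sqrt{k\log k})$, which after the above log-convexity manipulation buys a saving of order $\log k$ in the exponent; one must check this dominates the $\Theta(\log c)$ cost of the entropy term $\log\binom{n}{s}$, which equals $\Theta(\log k)$ in the delicate range $c=\Theta(k)$ and becomes negligible once $c\gg k$ (where everything only improves). Verifying that the constant $99^2/2$ wins this comparison is exactly where $k$ being a sufficiently large constant is used.
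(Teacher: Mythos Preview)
Your proof is correct and follows essentially the same approach as the paper: both arguments observe that the first $s$ vertices removed by the peeling process send at most $k-1$ edges each into their complement, and then take a union bound over the $\binom{n}{s}$ candidate sets. The only difference is that the paper bounds the event per vertex (each $v\in V_0$ has $\deg(v,V\setminus V_0)<k$, these events being independent, giving $\binom{n}{t}\rho^t$ with $\rho\le c^{-300}$ via Chernoff), whereas you bound the aggregate cut $e_G(S,V\setminus S)\le (k-1)s$ as a single binomial tail, exactly mirroring Claim~\ref{clm-core-size}; both routes reduce to the same comparison $\tfrac{99^2}{2}\log k$ versus $\Theta(\log k)$ for the entropy term.
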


Of course, there is nothing new in this lemma, and the estimate on
the size of the $k$-core can easily be improved. We present its
(straightforward) proof here mainly for the sake of completeness.

\begin{proof}
Let
\[
t=n / c^{200}\, .
\]
Construct the (possibly empty) $k$-core by peeling off repeatedly
vertices of degree less than $k$. If this process lasts for $t$
steps, then $G$ contains a subset $V_0$ of $|V_0|=t$ vertices such
that all degrees from $V_0$ to $V-V_0$ are less than $k$. For a
given $V_0$ and a given $v\in V_0$, the degree of $v$ to $V-V_0$ in
$\cG(n,p)$ is $\bin(n-t,p)$, and by Chernoff's inequality (see, e.g., \cite{AS}),
\begin{eqnarray*}
\rho&=&\P(\bin(n-t,p)<k)\le
\exp\left(-\frac{((n-t)p-k)^2}{2(n-t)p}\right) \le
\exp\left(-\frac{((n-t)p-k)^2}{2np}\right)\\
&\le&\exp\left(-\frac{\left(c-c^{-199}-k\right)^2}{2c}\right)\,.
\end{eqnarray*}
One can verify easily that for $c\ge k+99\sqrt{k\log k}$ and large $k$ it holds
that
\[
\frac{\left(c-c^{-199}-k\right)^2}{2c}\ge 300\log c\,,
\]
 and thus $\rho\le c^{-300}$. Hence the probability of the
 existence of a set $V_0$ as above is at most
\[
 \binom{n}{t}\rho^t\le \left(\frac{en}{t}\right)^t\rho^t\le
 (ec^{-100})^t=o(n^{-2})
\]
 for large enough $c$ (or large enough $k$, as $c\ge k$). If no such
 set $V_0$ exists, then clearly the $k$-core of $G$ has at least
 $n-t$ vertices, as required.
 \end{proof}

For the critical regime, the analog of Lemma \ref{lem-critical} is
now the following statement.

\begin{lemma}\label{lem-critical2}
Fix $k$ large enough and let $G\sim \cG(n,m)$ with
$m=cn/2$ and $k\leq c \leq k+100\sqrt{k\log k}$.
Then with probability $1-o(n^{-2})$, for any $G'=(V',E')\subset G$ with
minimum degree $\delta(G')\geq k$ such that $V'=V(\Gk[G])$ and $E(\Gk[G]) \setminus E'$
contains at most $n/\log\log n$ edges, the graph
$G'$ contains a $(2k_1)$-factor.
\end{lemma}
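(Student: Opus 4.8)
The plan is to verify Tutte's $f$-factor condition for the constant value $f\equiv d:=2k_1=2\lfloor(k-3)/2\rfloor$ on $G'$, using as a near black box the existence of a $D$-factor of the core, where $D:=d+2=2\lfloor(k-1)/2\rfloor\in\{k-1,k-2\}$: such a factor is supplied (with probability $1-o(n^{-2})$, after a routine strengthening of the failure estimate and noting that it is invoked at a single value of $m$) by the theorem of Chan and Molloy~\cite{CM} when $k$ is odd and by that of Pra{\l}at, Verstra{\"e}te and Wormald~\cite{PVW} when $k$ is even. As usual I would pass to $\cG(n,p)$; if $\Gk$ is empty there is nothing to prove. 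Set $F=E(\Gk)\setminus E'$, so $G'=\Gk\setminus F$ with $|F|\le n/\log\log n$ and $\delta(G')\ge k$, and for a graph $H$ and disjoint $S,T$ write $\delta^{(f)}_H(S,T)=f(|S|-|T|)+\sum_{v\in T}d_{H-S}(v)-q^{(f)}_H(S,T)$, where $q^{(f)}_H(S,T)$ is the number of components $C$ of $H-S-T$ with $f|C|+e_H(C,T)$ odd. By Tutte's $f$-factor theorem the existence of the $D$-factor of $\Gk$ is equivalent to $\delta^{(D)}_{\Gk}(S,T)\ge0$ for all disjoint $S,T\subseteq V'$, and to produce the desired $(2k_1)$-factor it suffices to prove $\delta^{(d)}_{G'}(S,T)\ge0$ for all such $S,T$. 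A first useful observation is that both $d$ and $D$ are even, so $f|C|$ is always even and $q^{(d)}_H(S,T)=q^{(D)}_H(S,T)$ is simply the number of components $C$ with $e_H(C,T)$ odd; in particular the case $T=\emptyset$ of Tutte's condition is automatic, unlike in \S\ref{subsec:gnp-near-tau}. I would also record that $|V'|=|V(\Gk)|\ge\tfrac45 n$ (as $\Gk\ne\emptyset$ and $k$ is large), so that Lemma~\ref{lem-expansion}, Claims~\ref{clm-G'-expansion} and~\ref{clm-G'-expansion-large-sets}, and a large-set estimate modelled on Claim~\ref{clm-tutte-large-sets} all apply to $G'$, with total failure probability $o(n^{-2})$.

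I would then split into three ranges. \emph{Case $1$ ($|S|\ge|T|$).} Every component of $G'-S-T$ with an edge to $T$ accounts for one of the $\sum_{v\in T}d_{G'-S}(v)-2e_{G'}(T)$ edges from $T$ to $V'\setminus S\setminus T$, so $q^{(d)}_{G'}(S,T)\le\sum_{v\in T}d_{G'-S}(v)-2e_{G'}(T)$, whence $\delta^{(d)}_{G'}(S,T)\ge d(|S|-|T|)+2e_{G'}(T)\ge0$; this uses no randomness. \emph{Case $2$ ($|T|>|S|+2n/\log\log n$).} Here I transfer from the core: deleting the edges of $F$ one at a time, each deletion drops $\sum_{v\in T}d_{\cdot-S}(v)$ by at most $2$ and changes $q^{(d)}_{\cdot}(S,T)$ by at most $2$ (splitting a component can create at most one extra odd part), so $\delta^{(d)}_{G'}(S,T)\ge\delta^{(d)}_{\Gk}(S,T)-4|F|$; and since $q^{(d)}_{\Gk}=q^{(D)}_{\Gk}$ we have the exact identity $\delta^{(d)}_{\Gk}(S,T)=\delta^{(D)}_{\Gk}(S,T)+2(|T|-|S|)\ge2(|T|-|S|)$, whence $\delta^{(d)}_{G'}(S,T)\ge2(|T|-|S|)-4|F|\ge0$ by the assumption on $|T|-|S|$. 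This is the one place the $D$-factor of~\cite{CM,PVW} is used, and it is exactly here that the deliberate gap of $2$ between the target $2\lfloor(k-3)/2\rfloor$ and the available $2\lfloor(k-1)/2\rfloor$ pays off, absorbing the $o(n)$ edges lost in passing from $\Gk$ to $G'$.

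The remaining \emph{Case $3$ ($|S|\le|T|\le|S|+2n/\log\log n$)} is where the real work lies, and it parallels \S\ref{subsec:gnp-near-tau}. Using $\sum_{v\in T}d_{G'-S}(v)\ge k|T|-e_{G'}(S,T)$ together with $k-d\ge3$ gives $\delta^{(d)}_{G'}(S,T)\ge d|S|+3|T|-e_{G'}(S,T)-q^{(d)}_{G'}(S,T)$, so it remains to bound the last two terms. The count $q^{(d)}_{G'}(S,T)$ I would control exactly as in Corollary~\ref{cor-tutte-small-sets}: any union of components of $G'-S-T$ whose size is below the thresholds of Claim~\ref{clm-G'-expansion} (or of Claim~\ref{clm-G'-expansion-large-sets}) expands, within $G'$, into $S\cup T$ by a factor exceeding $2$ (exceeding $3$ for very small unions), forcing the number of odd components to be at most $\tfrac12(|S|+|T|)+O(c^{15/7})$, with the standard minimal sub-union device bridging the size ranges. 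For $e_{G'}(S,T)\le e_{G'}(S\cup T)$ I would use the sparsity estimate inside the proof of Lemma~\ref{lem-expansion}, namely $e_{G'}(S\cup T)\le1.2|S\cup T|$ when $|S\cup T|\le 3n/c^{10}$, and for larger sets a union bound over $S$, $T$ and a maximal independent transversal $Y$ of the odd components (each $v\in Y$ having at least $k-O(1)$ neighbours in $S\cup T$), in direct analogy with Claim~\ref{clm-tutte-large-sets}. In every sub-range the slack $k-d\ge3$, and the fact that $d$ is even (which removes the odd-component bookkeeping of \S\ref{subsec:gnp-near-tau} and trivialises $T=\emptyset$), leave enough room to conclude $\delta^{(d)}_{G'}(S,T)\ge0$.

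I expect the main obstacle to be precisely this third range with $|S|$ and $|T|$ both of linear size: there the crude estimates $e_{G'}(S\cup T)\le\tfrac{c}{2}|S\cup T|$ and $q^{(d)}_{G'}(S,T)\le\tfrac12(|S|+|T|)$ are not quite enough on their own, and one must redo the careful optimisation underlying Claim~\ref{clm-tutte-large-sets} — now with the extra set $T$ in play, but with more slack to work with. The other point to check carefully is the probability bookkeeping for~\cite{CM,PVW}. Once $G'$ is shown to contain a $(2k_1)$-factor, Petersen's $2$-factor theorem (applicable since $2k_1$ is even) splits it into $k_1$ edge-disjoint $2$-factors whenever that is needed later.
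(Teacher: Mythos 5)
Your proposal takes a genuinely different route from the paper's. The paper asserts (without supplying details) that the entire Tutte verifications of~\cite{PVW} and~\cite{CM} can be re-run inside $G'$ (the core minus at most $n/\log\log n$ edges), producing a $D$-factor of $G'$ directly, from which a $(2k_1)$-factor is extracted by Petersen's decomposition; you instead use the $D$-factor of the \emph{unaltered} core $\Gk$ as a black box and verify Tutte's $f$-factor condition for $f\equiv 2k_1$ in $G'$ from scratch, split into three regimes. Your Case~1 is a standard elementary observation, and your Case~2 is a nice step: the identity $\delta^{(d)}_{\Gk}(S,T)=\delta^{(D)}_{\Gk}(S,T)+2(|T|-|S|)$ (exploiting $q^{(d)}=q^{(D)}$ since both $d$ and $D$ are even) cleanly converts the gap of two between $d$ and $D$ into quantitative Tutte slack, which absorbs the $O(|F|)$ loss from deleting edges whenever $|T|-|S|$ is larger than a constant multiple of $|F|$. (A small point: a single edge deletion changes $\delta^{(d)}$ by at most $2$, not $4$; the two losses you name -- a drop of $2$ in $\sum_{v\in T}d_{H-S}(v)$ and an increase of $2$ in $q$ -- cannot both occur for the same edge. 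Your bound $-4|F|$ is nonetheless a valid upper bound on the loss, so nothing breaks, but your parenthetical ``at most one extra odd part'' for a component split should read ``at most two'', since an even-$e(C,T)$ component can split into two odd pieces.)

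The real issue is Case~3, and you are right to flag it as the main obstacle. That case is a full Tutte verification over the entire ``diagonal strip'' $0\le|T|-|S|\le 2n/\log\log n$, with $|S|$ and $|T|$ ranging from $0$ up to $|V'|$, and with $T$ nonempty the odd-component count $q^{(d)}_{G'}(S,T)$ depends on $T$ in a way the paper's Corollary~\ref{cor-tutte-small-sets} and Claim~\ref{clm-tutte-large-sets} (which treat the $1$-factor situation $T=\emptyset$) do not directly address. For $|S\cup T|\le 3n/c^{10}$ your chain of estimates $q\le\tfrac12(|S|+|T|)+O(c^{15/7})$, $e_{G'}(S,T)\le 1.2|S\cup T|$, $k-d\ge 3$ does close the argument, but for $|S\cup T|$ in the linear range the crude bounds $q\le|V'\setminus(S\cup T)|$ and $e_{G'}(S,T)\le e(G')$ plugged into $d|S|+3|T|-e_{G'}(S,T)-q$ do not suffice (the quantity can go negative, e.g.\ for $|S|=|T|\approx|V'|/2$), so a genuine first-moment computation over pairs $(S,T)$ with an independent transversal $Y$ of the odd components must be carried out, and it is not a cosmetic variation of Claim~\ref{clm-tutte-large-sets}: one now has two sets to union-bound over, the degree-into-$(S\cup T)$ guarantee for $Y$ is $k-O(1)$ rather than $k-2$, and the slack must be shown to survive $c$ up to $k+100\sqrt{k\log k}$. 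In short, Cases~1 and~2 are correct and clean, and the overall strategy (black-boxing the $D$-factor rather than re-opening the proofs of~\cite{PVW,CM}) is an attractive alternative to the paper's, but Case~3 as written is a sketch with a real gap in the linear-size regime -- comparable in difficulty to the verification the paper waves off with ``we indeed verified'', and which your approach does not avoid.
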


The proof of this lemma can be derived by repeating the arguments of
Pra{\l}at, Verstra{\"e}te and Wormald~\cite{PVW} and of Chan and
Molloy~\cite{CM}. They proved that for large enough $k$, upon
creation, the $k$-core $\Gk[G]$ with high probability either
contains a $(k-2)$-factor or is $(k-2)$-factor-critical~\cite{PVW},
or even contains a $(k-1)$-factor or is $(k-1)$-factor-critical~\cite{CM}. (A graph $G$ is $t$-factor-critical if for every $v\in
V(G)$, the subgraph $G-v$ contains a $t$-factor.) Both these papers
use Tutte's factor theorem to derive the likely existence of a
required factor. As one can anticipate (and we indeed verified),
altering  the $k$-core slightly (by deleting at most $n/\log\log n$
edges) while preserving minimum degree $k$, do not influence the
proof. We would also like to point out that the results in both
papers~\cites{CM,PVW} used the assumption that $c\leq k+2\sqrt{k\log
k}$, but one can easily check that replacing $2$ with any other
constant will still work. Finally note that $2k_1<k-2$ and either $k-2$ or $k-1$ is even. Thus, Lemma~\ref{lem-critical2} follows from the above results by the well-known
fact that a $2s$-factor contains a $2t$-factor for all $t \leq s$.

\begin{proof}[\textbf{\emph{Proof of Theorem~\ref{thm-pack-small-c} (sketch)}}]
We utilize the same approach, notation and terminology as in the
proof of Theorem~\ref{thm-ham-small-c}. We start by summarizing all
the results that have been established in the proof of that theorem
which we use here.

Let $\cS=(e_1,\ldots,e_m)$ be a uniformly chosen ordered subset of
$m$ edges out of the $\binom{n}2$ possible ones on the vertex set
$V=[n]$, $m' = m - n/\log\log n$, and let $G$ and $H$ be the
graphs on the vertex set $V$ with edge sets
$E(G)=\{e_1,\ldots,e_m\}$, $E(H)=\{e_1,\ldots,e_{m'}\}$. As
$G\sim\cG(n,m)$, if $\Gk$ is empty then there is nothing left to
prove. We therefore assume otherwise and aim to establish that $\Gk$
contains a family of $k_1$ edge-disjoint Hamilton cycles with
probability $1-o(n^{-2})$. Given the above two graphs $H \subset G$
on the same vertex set $V$, define the partition of the vertex set
of $\Gk$, the $k$-core of $G$, into $\cB_k=\cB_k(G,H)$ and $\cC_k
=\cC_k(G,H)$ so that $\cB_k$ consists of all vertices of $\Gk$
having at least $k$ neighbors in $V(\Gk)$ already in the subgraph
$H$ and $\cC_k = V(\Gk) \setminus \cB_k$. Denote the indices of the
subset of the final $m-m'$ edges having both endpoints in $\cB_k$ by
$\cJ$ and let $J = |\cJ|$. Recall that we have already established
that upon conditioning on $\cB_k$, $\cC_k$ (such that
$\cB_k\cup\cC_k \neq\emptyset$ by our assumption) as well as $\cJ$
and all the edges $\cS^* = \{ e_j : j \notin \cJ\}$, the remaining
$J$ edges of $\cS$ are uniformly distributed over all edges missing
from $\cB_k$ (that is, edges with both endpoints in $\cB_k$ that did
not appear among $e_1,\ldots,e_{m'}$). Moreover we also proved that
$J \geq n/(100\log \log n)$ with probability $1-O(n^{-9})$.

Letting $F=(V,\cS^*)$, as in the proof of
Theorem~\ref{thm-ham-small-c} we have $V(\Gk[F])=V(\Gk[G])$. Further
note that $G'=\Gk[F]$ satisfies the conditions of Lemma
\ref{lem-critical2} and therefore contains a $(2k_1)$-factor.
By a well-known fact from graph theory, this $(2k_1)$-factor can
be decomposed into $k_1$ edge-disjoint 2-factors
$\Lambda_1,\ldots,\Lambda_{k_1}$. Observe that, due to Claim
\ref{clm-max-disjoint-cycles}, with probability $1-o(n^{-2})$ each
2-factor $\Lambda_i$ has at most $n/\sqrt{\log n}$ cycles. We have
at our disposal a set $\cJ$ of $J\ge n/(100\log\log n)$ random edges
to fall into the set $\cB_k$. Split $\cJ$ into $k_1$ sets
${\cJ}_1,\ldots,{\cJ}_{k_1}$ of nearly equal  sizes $|{\cJ}_i|\ge
\lfloor J/k_1\rfloor$ and use ${\cJ}_i$ to convert the $i$-th
2-factor $\Lambda_i$ into a Hamilton cycle, of course relying on the
existing edges as the backbone. More specifically, we repeat $k_1$
iterations of the following process. Suppose we have already
constructed  Hamilton cycles $H_1,\ldots,H_{i-1}$. Our goal is to
convert $\Lambda_i$ into a Hamilton cycle $H_i$ using the edges of
\[
G_i=G'-(H_1\cup\ldots\cup
H_{i-1})\cup(\Lambda_{i+1}\cup\ldots\cup\Lambda_{k_1})
\]
 and the random edges from ${\cJ}_i$. Observe that the minimum degree of the
relevant graph satisfies $\delta(G_i)\ge k-(2k_1-2)\ge 5$, thus
making Lemma~\ref{lem-expansion} applicable. By this lemma, all
connected components of $G_i$ are of size at least $n/c^{10}$ and
therefore $G_i$ has at most $c^{10}$ connected components. Moreover,
since $|\cC_k| \leq 2(m-m')=2n/(\log \log n)$, at least
$n/c^{10}-|\cC_k| \geq n/c^{11}$ vertices of each component belong
to $\cB_k$. Thus, adding any random edge in $\cB_k$ will connect two
of these components with probability at least $c^{-22}$. By standard concentration arguments, it is then easy to see that throwing
in, say, $\log ^2 n$ edges from ${\cJ}_i$ will turn $G_i$ into
a connected graph with probability $1-o(n^{-2})$.

Now we apply Lemma~\ref{lem-Posa}. Due to this lemma together with
the expansion property guaranteed by Lemma~\ref{lem-expansion}, at
any step $G_i$ contains at least $n/(2c^{20})$ boosters.
Moreover, the number of such boosters with both endpoints in $\cB_k$
is at least $n/(2c^{20})-n |\cC_k|\geq
n/c^{21}$. Thus, the probability that a random edge falling
into $\cB_k$ hits a booster is at least $c^{-21}$. Since $\Lambda_i$
has only at most $n/\sqrt{\log n}$ cycles to begin with, hitting
$n/\sqrt{\log n}+1$ such boosters will transform it into a Hamilton
cycle. Also note that we have not yet used the vast majority of the
edges from ${\cJ}_i$, and can still add $|{\cJ}_i|-\log ^2
n>n/(200\log \log n)$ random edges to $G_i$. By adding all
these edges, we collect a number of boosters which
stochastically dominates a binomial random variable with
parameters $n/(200\log \log n)$ and $c^{-21}$. Therefore, by
standard concentration arguments, we create the next Hamilton cycle
$H_i$ with probability at least $1-o(n^{-2})$. Finally, we release
the edges of $\Lambda_i$ not used in $H_i$ back into the graph,
guaranteeing that in the next iteration too the minimum degree of
the backbone graph is at least 5.
\end{proof}

\begin{proof}[\textbf{\emph{Proof of Theorem~\ref{thm-pack-large-c}.}}]
Observe first that due to Lemma~\ref{lem-coresize}, the difference
in sizes of the $k$-cores $\Gk[G]$ and $\Gk[H]$ is relatively small:
\[
|V(\Gk[G])|-|V(\Gk[H])|\le n-|V(\Gk[H])|\le n/c^{200}\,.
\]
Let
$V'=V(\Gk[H])$ and let $R$ be the set of $n$ random edges. For every
edge $e=(u,v) \in R$ (uniformly distributed over missing edges in
$H$) expose whether both $u,v$ are in $V'$ or not. If at least one
of them is not in $V'$ expose both endpoints of $e$. Otherwise put
$e$ into $R'$ and note that $R'$ is the set of random edges
uniformly distributed over all missing edges in the induced graph of
$H$ on $V'$. Also note that since $|V'| \geq (1-c^{-200})n$, the number
of edges in $R'$ is stochastically dominated by the binomial random
variable with parameters $n$ and $2/3$. Therefore with probability
at least $1-o(n^{-2})$ in the end of this process $R'$ has at least
$n/2$ edges. Moreover, this process reveals the set of vertices of
the $k$-core of the graph $G$ obtained from $H$ by adding edges in
$R$.

Fix a collection $\Lambda_1,\ldots,\Lambda_{k_1}$ of edge-disjoint
Hamilton cycles in $\Gk[H]$. We will use the set $R'$ of random
edges to convert them into edge-disjoint Hamilton cycles in
$\Gk[G]$. To this end, split the set  $R'$ in $k_1$ parts
$R_1,\ldots, R_{k_1}$ of sizes $|R_i|\ge n/(3k_1)$. We perform $k_1$
iterations of the following process. Suppose that for $i\in [k_1]$
we have already created Hamilton cycles $H_1,\ldots,H_{i-1}$ in
$\Gk[G]$. Let
\[
G_i=\Gk[G]-(H_1\cup\ldots\cup
H_{i-1})\cup(\Lambda_{i+1}\cup\ldots\cup\Lambda_{k_1})\,.
\]
By
definition, the minimum degree of $G_i$ is at least 5, enabling us
to utilize Lemma~\ref{lem-expansion}. Due to this lemma, all
connected components of $G_i$ are of size at least $n/c^{10}$ and
therefore $G_i$ has at most $c^{10}$ connected components. Moreover,
at least $n/c^{10}-n/c^{200} \geq n/c^{20}$ vertices of each
component belong to $V'$. Thus adding any random edge in $V'$ will
connect two of these components with probability at least $c^{-40}$.
Therefore, by standard concentration arguments, it is easily seen
that throwing in (say) first $\log ^2 n$ edges from $R_i$ will turn
$G_i$ into a connected graph with probability $1-o(n^{-2})$.

Next, as in the proof of Theorem~\ref{thm-ham-large-c}, we
add to $\Lambda_i$ all vertices of $V(\Gk[G])-V(\Gk[H])$ (at most
$n/c^{200}$ of them) to have a 2-factor with at most $n/c^{200}+1$
cycles. We merge these cycles into one gradually, using the
remaining $|R_i| - \log^2 n \geq n/(4k_1)>n/c^2$ of the edges of the
$R_i$. By the expansion properties (Lemma~\ref{lem-expansion}) of $G_i$, we can derive from Lemma~\ref{lem-Posa} that there
are at least $n^2/(2c^{20})$ boosters in $G_i$. Moreover, all but
$n^2/c^{200}$ such boosters have both endpoints in $V'$.
Hence, the probability that a new edge, uniform over all missing
edges in $V'$, is a booster is at least $c^{-21}$. Hitting
$n/c^{200}+2$ such boosters will transform our 2-factor into a
Hamilton cycle in $G_i$. Since the number of boosters that we encounter is
stochastically dominated by a binomial variable $\bin(n/c^2, c^{-21})$, we deduce that with probability
$1-o(n^{-2})$ exposing random edges from $R_i$ will create the next
Hamilton cycle $H_i$ in $\Gk[G]$. Finally, we release the edges of
$\Lambda_i$ not used in $H_i$ back into the graph, guaranteeing that
in the next iteration too the minimum degree of the backbone graph
would be at least 5.
 \end{proof}

\section{Open Problems}\label{sec:open}
The main problem that remains open is, of course, whether the result of Theorem~\ref{thm-1} (the Hamiltonicity of $\Gk_t$ for all $t\geq \tau_k$ w.h.p.\ provided $k\geq 15$) can be pushed all the way down to $k\geq3$.
The bottleneck in our proof, as mentioned before, was finding a 2-factor in the $k$-core minus the unrevealed sprinkling edges. Due to the complexity of this graph, instead of appealing to Tutte's criterion for a 2-factor, we construct one using 2 disjoint perfect matchings. This appears to be possible at $\tau_k$ as long as $k\geq 7$. By taking $k\geq 15$ we were able both to simplify the arguments used to recover this 2-factor, as well as to guarantee that $\Gk_t$ would remain Hamiltonian for all $t\geq \tau_k$ w.h.p. It is thus quite plausible that, when paired with a more refined analysis, the present framework should produce an improved bound on $k$. However, a new idea seems to be needed to take it down to $k=3$.

For the packing problem, recalling the conjecture
of Bollob\'as, Cooper, Fenner and Frieze~\cite{BCFF} that the $k$-core w.h.p.\ contains $\lfloor (k-1)/2\rfloor$ edge-disjoint Hamilton cycles, our results here are only one Hamilton cycle short of this conjecture. However, the
present methods appear to be incapable of bridging this small gap.
In particular, in our cycle-by-cycle extraction argument it is
crucial to have minimum degree 5 at every stage, so as to guarantee
sufficient expansion of small sets (as given by Lemma~\ref{lem-expansion} and used by Lemma~\ref{lem-Posa}).
Once we extract $\lfloor (k-3)/2\rfloor$ Hamilton cycles from the $k$-core, we will be left with a subgraph of minimum degree 3,
and this by itself is not enough to ensure the required expansion
(e.g., consider a set of degree-3 vertices forming a cycle). 
In conclusion, while new ideas are needed to fully settle
this conjecture, our result provides a fairly firm evidence
for its validity.

\vspace{0.2cm} \noindent {\bf Acknowledgment.}\, A major part of
this work was carried out when the first and the third authors were
visiting Microsoft Research at Redmond, WA. They would like to thank
the Theory Group at Microsoft Research for hospitality and for
creating a stimulating research environment.
The authors are also grateful to an anonymous referee for useful comments and corrections.

\begin{bibdiv}
\begin{biblist}[\normalsize]

\bib{AKS}{article}{
   author={Ajtai, M.},
   author={Koml{\'o}s, J.},
   author={Szemer{\'e}di, E.},
   title={First occurrence of Hamilton cycles in random graphs},
   conference={
      title={Cycles in graphs},
      address={Burnaby, B.C.},
      date={1982},
   },
   book={
      series={North-Holland Math. Stud.},
      volume={115},
      publisher={North-Holland},
      place={Amsterdam},
   },
   date={1985},
   pages={173--178},
}


\bib{AS}{book}{
  author={Alon, Noga},
  author={Spencer, Joel H.},
  title={The probabilistic method},
  edition={3},
  publisher={John Wiley \& Sons Inc.},
  date={2008},
  pages={xviii+352},
}

\bib{Bennett}{article}{
   title = {Probability inequalities for the sum of independent random Variables},
   author = {Bennett, George},
   journal = {Journal of the American Statistical Association},
   volume = {57},
   number = {297},
   pages = {33--45},
   date = {1962},
}

\bib{Bollobas-1}{article}{
   author={Bollob{\'a}s, B{\'e}la},
   title={The evolution of sparse graphs},
   conference={
      title={Graph theory and combinatorics},
      address={Cambridge},
      date={1983},
   },
   book={
      publisher={Academic Press},
      place={London},
   },
   date={1984},
   pages={35--57},
}

\bib{Bollobas}{book}{
   author={Bollob\'as, Bela},
   title={Random graphs},
   edition={2},
   publisher={Cambridge University Press, Cambridge},
   date={2001},
   pages={xviii+498},
}


\bib{BCFF}{article}{
   author={Bollob{\'a}s, B.},
   author={Cooper, C.},
   author={Fenner, T. I.},
   author={Frieze, A. M.},
   title={Edge disjoint Hamilton cycles in sparse random graphs of minimum degree at least $k$},
   journal={J. Graph Theory},
   volume={34},
   number={1},
   date={2000},
   pages={42--59},
}

\bib{BFF}{article}{
  author={Bollob{\'a}s, B.},
   author={Fenner, T. I.},
   author={Frieze, A. M.},
   title={Hamilton cycles in random graphs of minimal degree at least $k$},
   conference={
      title={A tribute to Paul Erd\H os},
   },
   book={
      publisher={Cambridge Univ. Press},
      place={Cambridge},
   },
   date={1990},
   pages={59--95},
}

\bib{BolF}{article}{
   author={Bollob{\'a}s, B{\'e}la},
   author={Frieze, Alan M.},
   title={On matchings and Hamiltonian cycles in random graphs},
   conference={
      title={Random graphs '83},
      address={Pozna\'n},
     date={1983},
   },
   book={
     series={North-Holland Math. Stud.},
      volume={118},
      publisher={North-Holland},
      place={Amsterdam},
   },
   date={1985},
   pages={23--46},
}

\bib{CW}{article}{
   author={Cain, Julie},
   author={Wormald, Nicholas},
   title={Encores on cores},
   journal={Electron. J. Combin.},
   volume={13},
   date={2006},
   number={1},
   pages={Research Paper 81, 13 pp. (electronic)},
}

\bib{CM}{article}{
  title={(k+ 1)-Cores Have k-Factors},
  author={Chan, Sio On},
  author={Molloy, Michael},
  journal={Combinatorics, Probability and Computing},
  volume={21},
  number={6},
  date={2012},
  pages={882--896},
}

\bib{Chvatal}{article}{
   author={Chv{\'a}tal, V.},
   title={Almost all graphs with $1.44n$ edges are $3$-colorable},
   journal={Random Structures Algorithms},
   volume={2},
   date={1991},
   number={1},
   pages={11--28},
}

\bib{Cooper}{article}{
   author={Cooper, Colin},
   title={The cores of random hypergraphs with a given degree sequence},
   journal={Random Structures Algorithms},
   volume={25},
   date={2004},
   number={4},
   pages={353--375},
}

\bib{ER60}{article}{
   author={Erd{\H{o}}s, P.},
   author={R{\'e}nyi, A.},
   title={On the evolution of random graphs},
   journal={Magyar Tud. Akad. Mat. Kutat\'o Int. K\"ozl.},
   volume={5},
   date={1960},
   pages={17--61},
}


\bib{Frieze}{article}{
    author = {Frieze, Alan},
    title = {On a greedy 2-matching algorithm and Hamilton cycles in random graphs with minimum degree at least three},
    journal = {Random Structures Algorithms},
    status = {to appear},
}



\bib{Hoeffding}{article}{
   author={Hoeffding, Wassily},
   title={Probability inequalities for sums of bounded random variables},
   journal={J. Amer. Statist. Assoc.},
   volume={58},
   date={1963},
   pages={13--30},
}

\bib{JL07}{article}{
   author={Janson, Svante},
   author={Luczak, Malwina J.},
   title={A simple solution to the $k$-core problem},
   journal={Random Structures Algorithms},
   volume={30},
   date={2007},
   number={1--2},
   pages={50--62},
}

\bib{JL08}{article}{
   author={Janson, Svante},
   author={Luczak, Malwina J.},
   title={Asymptotic normality of the $k$-core in random graphs},
   journal={Ann. Appl. Probab.},
   volume={18},
   date={2008},
   number={3},
   pages={1085--1137},
}

\bib{JLR}{book}{
   author={Janson, Svante},
   author={{\L}uczak, Tomasz},
   author={Rucinski, Andrzej},
   title={Random graphs},
   series={Wiley-Interscience Series in Discrete Mathematics and
   Optimization},
   publisher={Wiley-Interscience, New York},
   date={2000},
   pages={xii+333},
}


\bib{KS}{article}{
   author={Koml{\'o}s, J{\'a}nos},
   author={Szemer{\'e}di, Endre},
   title={Limit distribution for the existence of Hamiltonian cycles in a random graph},
   journal={Discrete Math.},
   volume={43},
   date={1983},
   number={1},
   pages={55--63},
}

\bib{Korshunov}{article}{
  author={Korshunov, A.},
  title={Solution of a problem of Erd\H{o}s and R\'enyi on Hamilton
  cycles in non-oriented graphs},
  journal={Soviet Math. Dokl.},
  volume={17},
  pages={760--764},
}

\bib{KLS}{article}{
   author={Krivelevich, Michael},
   author={Lubetzky, Eyal},
   author={Sudakov, Benny},
   title={Hamiltonicity thresholds in Achlioptas processes},
   journal={Random Structures Algorithms},
   volume={37},
   date={2010},
   number={1},
   pages={1--24},
}


\bib{LP}{book}{
   author={Lov{\'a}sz, L{\'a}szl{\'o}},
   author={Plummer, Michael D.},
   title={Matching theory},
   publisher={AMS Chelsea Publishing, Providence, RI},
   date={2009},
   pages={xxxiv+554},
}

\bib{Luczak87}{article}{
   author={{\L}uczak, Tomasz},
   title={On matchings and Hamiltonian cycles in subgraphs of random graphs},
   conference={
      title={Random graphs '85},
      address={Pozna\'n},
      date={1985},
   },
   book={
      series={North-Holland Math. Stud.},
      volume={144},
      publisher={North-Holland},
      place={Amsterdam},
   },
   date={1987},
   pages={171--185},
}

\bib{Luczak91}{article}{
   author={{\L}uczak, Tomasz},
   title={Size and connectivity of the $k$-core of a random graph},
   journal={Discrete Math.},
   volume={91},
   date={1991},
   number={1},
   pages={61--68},
}

\bib{Luczak92}{article}{
   author={{\L}uczak, Tomasz},
   title={Sparse random graphs with a given degree sequence},
   conference={
      title={Random graphs, Vol.\ 2},
      address={Pozna\'n},
      date={1989},
   },
   book={
      series={Wiley-Intersci. Publ.},
      publisher={Wiley},
      place={New York},
   },
   date={1992},
   pages={165--182},
}

\bib{Molloy96}{article}{
   author={Molloy, Michael},
   title={A gap between the appearances of a $k$-core and a
   $(k+1)$-chromatic graph},
   journal={Random Structures Algorithms},
   volume={8},
   date={1996},
   number={2},
   pages={159--160},
}

\bib{Molloy}{article}{
   author={Molloy, Michael},
   title={Cores in random hypergraphs and Boolean formulas},
   journal={Random Structures Algorithms},
   volume={27},
   date={2005},
   number={1},
   pages={124--135},
}

\bib{PSW}{article}{
   author={Pittel, Boris},
   author={Spencer, Joel},
   author={Wormald, Nicholas},
   title={Sudden emergence of a giant $k$-core in a random graph},
   journal={J. Combin. Theory Ser. B},
   volume={67},
   date={1996},
   number={1},
   pages={111--151},
}

\bib{Posa}{article}{
   author={P{\'o}sa, L.},
   title={Hamiltonian circuits in random graphs},
   journal={Discrete Math.},
   volume={14},
   date={1976},
   number={4},
   pages={359--364},
}

\bib{PVW}{article}{
   author={Pra{\l}at, Pawe{\l}},
   author={Verstra{\"e}te, Jacques},
   author={Wormald, Nicholas},
   title={On the threshold for $k$-regular subgraphs of random graphs},
   journal={Combinatorica},
   volume={31},
   date={2011},
   number={5},
   pages={565--581},
}


\bib{Riordan}{article}{
   author={Riordan, Oliver},
   title={The $k$-core and branching processes},
   journal={Combin. Probab. Comput.},
   volume={17},
   date={2008},
   number={1},
   pages={111--136},
}

\bib{Tutte}{article}{
   author={Tutte, W. T.},
   title={The factorization of linear graphs},
   journal={J. London Math. Soc.},
   volume={22},
   date={1947},
   pages={107--111},
}

\end{biblist}
\end{bibdiv}

\end{document}